\newtheorem{thm}{Theorem}
\newtheorem{lem}[thm]{Lemma}
\newtheorem{cor}[thm]{Corollary}
\newtheorem{defi}[thm]{Definition}
\newtheorem{prop}[thm]{Proposition}
\newtheorem{rk}[thm]{Remark}
\newcommand{\vip}{\vskip.2cm}
\newcommand{\e}{{\varepsilon}}
\newcommand{\rr}{{\mathbb{R}}}
\newcommand{\CC}{{\mathcal D}}
\newcommand{\nn}{{\mathbb{N}}}
\newcommand{\rd}{{\mathbb{R}^d}}
\newcommand{\E}{\mathbb{E}}
\newcommand{\Leb}{{\rm Leb}}
\newcommand{\Var}{\mathbb{V}{\rm ar}\,}
\newcommand{\Cov}{\mathbb{C}{\rm ov}}
\newcommand{\bh}{{\bf h}}
\newcommand{\bg}{{\bf g}}
\newcommand{\bff}{{\bf f}}
\newcommand{\bm}{{\bf m}}
\newcommand{\bM}{{\bf M}}
\newcommand{\bR}{{\bf R}}
\newcommand{\ba}{{\bf a}}
\newcommand{\bb}{{\bf b}}
\newcommand{\Exp}{{\rm Exp}}
\newcommand{\cN}{\mathcal{N}}
\newcommand{\indiq}{{{\mathbf 1}}}
\newcommand{\intoi}{\int_0^\infty}
\newcommand{\intrd}{\int_\rd}
\newcommand{\dd}{{\rm d}}
\begin{document}

\title{On the Kozachenko-Leonenko entropy estimator}

\author{Sylvain Delattre}

\author{Nicolas Fournier}

\address{Sylvain Delattre, Laboratoire de Probabilit\'es et Mod\`eles Al\'eatoires, UMR 7599, 
Universit\'e Paris Diderot, Case courrier 7012, avenue de France, 75205 Paris Cedex 13, France.}

\email{sylvain.delattre@univ-paris-diderot.fr}

\address{Nicolas Fournier, Laboratoire de Probabilit\'es et Mod\`eles Al\'eatoires, UMR 7599,
Universit\'e Pierre-et-Marie Curie, Case 188, 4 place Jussieu, F-75252 Paris Cedex 5, France.}

\email{nicolas.fournier@upmc.fr}

\begin{abstract}
We study in details the bias and variance of the
entropy estimator proposed by Kozachenko and Leonenko \cite{kl}
for a large class of densities on $\rd$. We then use the work of Bickel and Breiman \cite{bb}
to prove a central limit theorem in dimensions $1$ and $2$.
In higher dimensions, we provide a development of the bias in terms 
of powers of $N^{-2/d}$. This allows us
to use a Richardson extrapolation to build, in any dimension, an estimator satisfying a 
central limit theorem and for which we can give some 
some explicit (asymptotic) confidence intervals.
\end{abstract}

\subjclass[2010]{62G05}

\keywords{Non-parametric estimation, Entropy, Nearest neighbors}

\maketitle

\section{Introduction and main results}

\subsection{The setting}

Consider a probability measure $f$ on $\rd$. We also denote by $f$ its density.
We are interested in its entropy defined by
$$
H(f)=-\intrd f(x) \log f(x)\dd x.
$$ 
For $N\geq 1$ and for $X_1,\dots,X_{N+1}$ an i.i.d. sample of $f$, we consider, for each $i=1,\dots,N+1$,
\begin{equation}\label{rz}
R^N_i=\min\{|X_i-X_j| \; : \; j=1,\dots,N+1,\; j\ne i\}\quad \hbox{and} \quad
Y^N_i=N(R^N_i)^d.
\end{equation}
Here $|\,\cdot\,|$ stands for any norm on $\rd$. For $x\in\rd$ and $r\geq 0$, we set
$B(x,r)=\{y\in\rd\; :\; |y-x|\leq r\}$ and we introduce 
$v_d=\int_{B(0,1)}\dd x$.
We also denote by $\gamma=-\intoi e^{-x}\log x \dd x\simeq
0.577$ the Euler constant. We finally set
\begin{equation}\label{mh}
H_N= \frac 1 {N+1}\sum_{i=1}^{N+1} \log Y^N_i +\gamma+\log v_d.
\end{equation}
The estimator $H_N$ of $H(f)$ was proposed by Kozachenko and Leonenko
\cite{kl}.
The object of the paper is to study in details the bias, variance
and asymptotic normality of $H_N$.

\subsection{Heuristics}\label{heu}
Let us explain briefly why $H_N$ should be consistent.

\vip

The conditional law of $Y^N_i$ knowing $X_i$ is approximately $\Exp(v_df(X_i))$ for $N$ large:
for $r>0$, ${\Pr(Y^N_i>r \,|\, X_i)= [1-f(B(X_i,(r/N)^{1/d}))]^N \simeq 
\exp(-N f(B(X_i,(r/N)^{1/d})) )
\simeq \exp(-v_df(X_i)r)}$. 

\vip

Consequently, we expect that
$Y^N_i=\xi_i/(v_d f(X_i))$,
for a family $(\xi_i)_{i=1,\dots,N+1}$ of approximately 
$\Exp(1)$-distributed random variables, hopefully not too far from being independent.

\vip

We thus expect that
$(N+1)^{-1} \sum_{i=1}^{N+1} \log Y^N_i
\simeq \E[\log (\xi_1/(v_df(X_1)))]=\E[\log \xi_1]-\log v_d - 
\E[\log f(X_1)]\simeq -\gamma-\log v_d +H(f)$
and thus that $H_N \simeq H(f)$.

\subsection{Motivation}

Let us first reproduce the introductory paragraph of the Wikipedia page {\it Entropy estimation} \cite{w}:
``In various science/engineering applications, such as independent component analysis,
image analysis, genetic analysis, speech recognition, manifold learning, and time delay estimation, 
it is useful to estimate the differential entropy of a system or process, given some observations.''
Starting from this Wikipedia page and following the references therein,
we find numerous applied papers that we do not try to summarize, we also
found many papers dealing with entropy and its estimation
in kinetic physics, molecular chemistry, computational neuroscience, etc.
Beirlant, Dudewicz, Gy\"orfi and van der Meulen \cite{bdgvdm} also mention applications
to quantization, econometrics and spectroscopy.

\vip

The estimation of the relative entropy (or Kullback-Leibler divergence)
of $f$ with respect to some know $g$ is easily deduced from the entropy
estimation, since $H(f\vert g)=-H(f)+\intrd f(x) \log g(x) \dd x$ and since $\intrd f(x) \log g(x)
\dd x$ is easily estimated.

\vip

Concerning applications to statistics, let us mention a few goodness-of-fit tests based on the 
entropy estimation: see Vasicek \cite{v} for normality (Gaussian laws maximize
the entropy among all distributions with given variance),
Dudewicz and van der Meulen \cite{dvdm}
for uniformity (uniform laws maximize the entropy among
all distributions with given support), Mudholkar and Lin \cite{ml} for exponentiality
(exponential laws maximize
the entropy among all $\mathbb{R}_+$-supported distributions with given mean).
Also, Robinson \cite{r} proposed some independence test, based on the fact that
$f\otimes g$ maximizes $H(F)$ among all distributions $F$ with marginals $f$ and $g$.

\subsection{Available results}

We now list a few mathematical results. Let us first mention
the review paper \cite{bdgvdm} by Beirlant, Dudewicz, Gy\"orfi and van der Meulen.

\vip

Levit \cite{l} has shown that $\Var(\log f(X_1))=\intrd f(x)\log^2f(x)\dd x - (H(f))^2$
is the smallest possible normalized (by $N$) asymptotic
quadratic risk for entropy estimators in the local minimax sense.

\vip

Essentially, there are two types of methods for the entropy estimation. The {\it plug in} method
consists in using an estimator of the form $H_N=-\intrd f_N(x)\log f_N(x)\dd x$, where $f_N$ is an estimator
of $f$. One then needs to use something like a kernel density estimator and this
requires to have an idea of the tail behavior of $f$.
Joe \cite{j} considers the case where $f$ is bounded below on its (compact) support,
while Hall and Morton \cite{hm} propose some root $N$ and asymptotically normal estimators
assuming that $f(x)\sim a |x|^{-\alpha}$ (with $\alpha$ known) or
$f(x)\sim a \exp(-b|x|^{-\alpha})$ (with $\alpha$ known).

\vip

The second class of methods consists in using {\it spacings} if $d=1$, see
Vasicek \cite{v}, or {\it neighbors} as proposed by Kozachenko and Leonenko \cite{kl}.
In \cite{kl}, a consistency result is proved (for $H_N$ defined by \eqref{mh}), in any dimension, under rather
weak conditions on $f$ and this is generalized to other notions of entropies
by Leonenko, Pronzato and Savani in \cite{lps}.
Instead of using {\it nearest} neighbor, we can use $k$-th nearest neighbors with
either $k$ fixed or $1<<k<<N$ (similarly, in dimension $1$, we can use $k$-spacings).

\vip

In dimension $1$ and assuming that $f$ is bounded below on its (compact) support, Hall \cite{h,h2} and
van Es \cite{vE} show some root $N$ consistency and asymptotic normality for the entropy estimator
based on $k$ spacings (in both cases where $k$ is fixed or tends to infinity).
Tsybakov and van der Meulen \cite{tvdm} are the first to prove some root $N$ consistency for
some entropy estimator for general densities with unbounded support, in dimension $1$.
They consider a modified version of
\eqref{mh} and assume that $f$
is sufficiently regular, positive and has some sub-exponential tails.
Still in dimension $1$, El Haje and Golubev \cite{eg}
prove some root $N$ consistency and asymptotic normality
for the entropy estimator based on $1$-spacings. Furthermore, their assumptions on $f$ are
weaker than those of \cite{tvdm}. In particular, they allow $f$ to have some zeroes and some fat tails.

\vip

In any dimension, Bickel and Breiman \cite{bb} prove a very general central limit theorem
for estimators, based on nearest neighbors, of nonlinear functionals, 
unfortunately not including the entropy. Also, they do not study the bias.

\vip

Let us finally mention the paper of P\'al, P\'oczos and Szepesv\'ari:
they study other notions of entropy, work in dimension $d\geq 1$, use estimators based on 
nearest neighbors and quantify the consistency.

\subsection{Notation}
Let $r_0>0$ be fixed (we will assume for simplicity that $r_0=1$ in the proofs). 
We introduce the constant $\kappa$ and the functions $m,M:\rd \mapsto (0,\infty)$ defined by
\begin{align}\label{mM}
\kappa=\sup_{x\in\rd} f(B(x,r_0)),\quad 
m(x)=\inf_{\e\in(0,r_0)} \frac{f(B(x,\e))}{\e^d} \quad \hbox{and} \quad M(x)=\sup_{B(x,2r_0)} f.
\end{align}
Observe that we have $m\leq v_d f\leq v_d M$ (at least if $f$ is continuous).

\vip

For $\beta>0$, we say that $f\in \CC^\beta(\rd)$ if $f \in C^k(\rd)$ for
$k=\max\{i \in \nn \; : \; i<\beta\}$ and if $D^k f$ is locally 
H\"older continuous with index $\beta-k$. We then set (with $\sum_1^0=0$ when $k=0$)
\begin{equation}\label{gbeta}
G_\beta(x)=\sum_{i=1}^k |D^i f(x)| + \sup_{y\in B(x,r_0)}\frac{|D^kf(y)-D^kf(x)|}{|x-y|^{\beta-k}}.
\end{equation}

\subsection{Important convention} We write $\intrd \phi(x) f(x)\dd x$ for $\E[\phi(X_1)]=
\int_{\{f>0\}}\phi(x) f(x)\dd x$,
even if $\phi$ is not well-defined on $\{f=0\}$.

\subsection{Raw results}
We give here our results in a gross way: we write exactly what we can prove.
The next section contains some more comprehensible corollaries.
We start with the asymptotic normality and variance.

\begin{thm}\label{thv}
Assume that $f$ is bounded and continuous and that, for some $q>0$, some $\theta \in (0,1)$,
\begin{equation}\label{condv}
\intrd \Big(|x|^q +\log^2 m(x)+\frac{\log^2(2+|x|)}{(f(B(x,r_0)))^\theta} 
+\frac {M(x)}{m(x)}(1+|\log m(x)|)\Big)
f(x)\dd x<\infty.
\end{equation}

(i) If $\theta>1/2$, then $\sqrt N (H_N-\E[H_N])$ goes in law to $\cN(0,\sigma^2(f))$ as $N\to \infty$,
where 
$$
\sigma^2(f)=\intrd f(x)\log^2f(x)\dd x - (H(f))^2+\chi_d,$$ 
with
$$
\chi_d=2\log 2 + \frac{\pi^2}{6} - 1 + \intoi \intoi e^{-u-v}T(u,v) \frac{\dd u}{u}\frac{\dd v}{v},
$$
and $T\!:\!(0,\infty)^2\mapsto (0,\infty)$ defined by $T(v_dr^d,v_ds^d)={\int_{B(0,r+s)\setminus B(0,r\lor s)}
[\exp(\int_{B(0,r)\cap B(y,s)}\dd z) -1] \dd y}$.

\vip

(ii) If $\theta \in (0,1/2]$, then there is a constant $C$ such that  for all $N\geq 1$,
$\Var H_N \leq C N^{-2\theta}$.
\end{thm}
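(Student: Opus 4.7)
The plan is to decompose the centered estimator into an i.i.d.\ piece and a nearest-neighbor piece, apply a central limit theorem to each, and check that the two pieces are asymptotically uncorrelated. Setting $\eta_i = v_d f(X_i) N (R_i^N)^d$, one has $\log Y_i^N = \log \eta_i - \log v_d - \log f(X_i)$, whence
\begin{equation*}
H_N - \E[H_N] = -\frac{1}{N+1}\sum_{i=1}^{N+1}\bigl(\log f(X_i)-\E\log f(X_i)\bigr) + \frac{1}{N+1}\sum_{i=1}^{N+1}\bigl(\log\eta_i-\E\log\eta_i\bigr) =: \Sigma_N^1 + \Sigma_N^2.
\end{equation*}
The first piece is a classical i.i.d.\ sum, so $\sqrt{N}\,\Sigma_N^1 \to \cN(0,\Var(\log f(X_1)))$ as soon as $\intrd f\log^2 f\,\dd x < \infty$, an integrability that is furnished by the moment conditions in \eqref{condv}.

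The harder part is the CLT for $\sqrt{N}\,\Sigma_N^2$, which adapts the argument of Bickel and Breiman \cite{bb} for nearest-neighbor functionals. The heuristic of Subsection~\ref{heu} gives $\eta_i \approx \Exp(1)$ conditionally on $X_i$, so pointwise $\Var(\log \eta_i) \to \pi^2/6$. However, $\eta_i$ and $\eta_j$ are correlated when $X_j$ lies in (or near) the nearest-neighbor ball of $X_i$ and vice versa, i.e.\ for pairs $X_i, X_j$ at distance $O(N^{-1/d})$. Rescaling distances by $N^{-1/d}$ around a typical point, replacing the local sample by a Poisson process of intensity $v_d f(X_i)$, and computing $N\,\Cov(\log\eta_i,\log\eta_j)$ via a two-point Palm-type calculation, one obtains
\[
\lim_{N\to\infty} N\Var(\Sigma_N^2) = \chi_d = 2\log 2 + \tfrac{\pi^2}{6} - 1 + \intoi\intoi e^{-u-v} T(u,v)\,\frac{\dd u}{u}\,\frac{\dd v}{v},
\]
the function $T(u,v)$ encoding the geometry of two overlapping nearest-neighbor balls. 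The same blocking/Poissonization scheme yields joint asymptotic normality of $(\Sigma_N^1,\Sigma_N^2)$ with vanishing cross-covariance, because $\E[\log\eta_i\mid X_i]$ tends to the deterministic constant $-\gamma$ uniformly on compact sets, killing the asymptotic covariance with $\log f(X_i)$. Combining the three facts gives $\sqrt{N}(H_N-\E[H_N]) \to \cN(0,\Var(\log f(X_1)) + \chi_d) = \cN(0,\sigma^2(f))$.

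The main obstacle is that $\log$ is unbounded on both sides, so $\log\eta_i$ must be truncated at levels growing with $N$ and the truncation error must be shown to be $o(N^{-1/2})$ in $L^2$. The conditions in \eqref{condv} are tailored for this: the terms involving $\log^2 m$ and $(M/m)(1+|\log m|)$ control the local behavior of $\log\eta_i$ (in particular when some $X_j$ falls very close to $X_i$ in a high-density region), while the term $f(B(x,r_0))^{-\theta}\log^2(2+|x|)$ controls the contribution of low-mass regions where $R_i^N$ is typically large. Since $\Pr(R_i^N > r \mid X_i) = (1-f(B(X_i,r)))^N$, tails of $\log R_i^N$ in $L^p$ are governed by negative moments of $f(B(X_i,r_0))$; the threshold $\theta > 1/2$ is exactly what is required to render the upper-tail contribution negligible at rate $N^{-1/2}$ in $L^2$.

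For part~(ii), when $\theta \in (0,1/2]$, the same truncation estimates, now used only at the second-moment level and not upgraded to a CLT, yield the non-asymptotic bound $\Var H_N \le C N^{-2\theta}$: one writes $\Var H_N$ as a diagonal contribution of order $N^{-1}$ plus an off-diagonal sum estimated via $\Cov(\log Y_i^N,\log Y_j^N)$ under \eqref{condv}, and the rate $N^{-2\theta}$ is dictated by how the negative-moment integrability of $f(B(\cdot,r_0))$ degrades as $\theta$ decreases through $1/2$.
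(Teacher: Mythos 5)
Your overall strategy — use a Bickel--Breiman--type CLT for the bulk, then control tails via the integrability hypotheses — is in the right spirit, but the way you split the estimator and handle the truncation departs from the paper in ways that create real gaps.

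First, the decomposition $H_N-\E[H_N]=\Sigma_N^1+\Sigma_N^2$ is not what allows the paper to invoke \cite{bb}. The paper truncates $\log$ at a \emph{fixed} level $\e$, writes $H_N=H_N^\e+K_N^{1,\e}+K_N^{2,\e}+K_N^3$, and applies \cite[Theorems~3.5, 4.1]{bb} directly to $H_N^\e=\frac1{N+1}\sum\log_\e Y_i^N$: this is a sum $\frac1{N+1}\sum h(X_i,D_i)$ with $h$ bounded continuous, exactly the form covered by Bickel--Breiman. The resulting limit variance $\sigma^2_\e(f)=A_\e+B_\e+C_\e$ already encodes the full dependence structure, and letting $\e\to0$ in Lemma~\ref{lv0} produces $\sigma^2(f)$ with both the $\Var(\log f(X_1))$ and the $\chi_d$ pieces appearing together (the former from $A_\e$, via the independence of $\xi$ and $X_1$ in the limit). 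Your split forces you to prove a CLT for $\Sigma_N^2$ \emph{alone} and a joint CLT for $(\Sigma_N^1,\Sigma_N^2)$ with vanishing cross-covariance, neither of which follows from \cite{bb} as a black box; you would have to redo their Poissonization from scratch.

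Second, your argument for vanishing cross-covariance is incomplete. Uniform convergence of $\E[\log\eta_i\,|\,X_i]$ to $-\gamma$ on compacts only addresses the diagonal term $\Cov(\log f(X_1),\log\eta_1)$ and only on compacts; the tail contribution needs domination, and you say nothing at all about the $N$ off-diagonal terms $\Cov(\log f(X_i),\log\eta_j)$, $i\ne j$, which are the genuinely delicate part (they are $O(1/N)$ individually but sum over $\sim N$ indices).

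Third, you propose truncating $\log\eta_i$ at levels growing with $N$ so that the truncation error is $o(N^{-1/2})$ in $L^2$. But then the truncated piece is a triangular array with $h$ depending on $N$, and \cite{bb} does not apply as stated. The paper's device is the opposite and crucial: keep $\e$ fixed to get the CLT for $H_N^\e$, then show $\lim_{\e\to0}\sup_{N\ge1/\e}N\Var(K_N^{1,\e}+K_N^{2,\e}+K_N^3)=0$ (Lemmas~\ref{lv1}--\ref{lv5}), so that the interchange $N\to\infty$ then $\e\to0$ is licit. The uniformity in $N$ of the tail-variance bound is the hard part, and it is where the threshold $\theta>1/2$ actually bites (Lemma~\ref{lv4} gives $NW_N^{32}\le CN^{1-2\theta}$); your sketch names the threshold but not the mechanism. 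Part~(ii) has the same issue: the off-diagonal covariance estimate is the entire content of Lemma~\ref{lv5} (the decomposition of $\Gamma_N$ into $\Gamma_N^{11},\dots,\Gamma_N^{14},\Gamma_N^2$), and asserting that the rate ``is dictated by how the negative-moment integrability degrades'' is not a proof.
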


\begin{rk} 
We recall that $v_d=\pi^{d/2}/\Gamma(d/2+1)$ for $|\,\cdot\,|_2$ and $v_d=2^d$ for $|\,\cdot\,|_\infty$.
The constant $\chi_d$ depends on $d$ and on the norm $|\,\cdot\,|$.  We computed it numerically
by a Monte-Carlo method in the following situations:
\begin{center}
\begin{tabular}{|c||c|c|c|c|c|c|c|c|c|c|c|c|c|}
\hline
&d=1&d=2&d=3&d=4&d=5&d=6&d=7&d=8&d=9&d=10&d=20\\
\hline
$\chi_d$ for $|\,\cdot\,|_2$ & 2.14 & 2.29 & 2.42 & 2.52 & 2.61 & 2.67 & 2.7 & 2.7 & 2.8 & 2.9 & ? \\
\hline
$\chi_d$ for $|\,\cdot\,|_\infty$ & 2.14 & 2.31 & 2.47 & 2.60 & 2.70 & 2.78 & 2.84 & 2.88 & 2.91 & 2.94 & 3.03 \\
\hline
\end{tabular}
\end{center}
The values for $|\,\cdot\,|_\infty$ are more reliable because our Monte-Carlo method is unbiased
(this is due to the fact that we can compute explicitly the volume of the intersection of two balls).
We did not indicate any approximate value of $\chi_{20}$ for $|\,\cdot\,|_2$ because we obtain too variable results.
\end{rk}

We next study the bias.

\begin{thm}\label{thb1}
Let $\beta \in (0,2]\cap(0,d]$ and assume that
$f \in \CC^\beta(\rd)$. Put $R=M+G_\beta$.
Assume that $\kappa<1$ and that there are $q>0$ and $\theta \in (0,\beta/d]$ such that
\begin{equation}\label{condb1}
\intrd \Big(|x|^q+ \frac{\log(2+|x|)}{(f(B(x,r_0)))^\theta}+\frac{R^{2\theta}(x)}{m^{2\theta}(x)}
+\frac{R^{\theta d/\beta}(x)}{m^{\theta(d+\beta)/\beta}(x)} \Big) f(x)\dd x<\infty.
\end{equation}
Then there is a constant $C$ such that for all $N\geq 1$,
\begin{align*}
 |\E[H_N]-H(f)|\leq \frac C {N^{\theta}}.
\end{align*}
\end{thm}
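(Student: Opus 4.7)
The plan is to first derive a closed-form expression for the bias. By exchangeability, $\E[H_N] = \E[\log Y_1^N] + \gamma + \log v_d$. Using the identity $\log y = \intoi t^{-1}(\indiq_{t\leq y} - \indiq_{t\leq 1})\,\dd t$ for $y>0$, the exact conditional survival function $\Pr(Y_1^N \geq t \mid X_1 = x) = (1-f(B(x,(t/N)^{1/d})))^N$, and the identity $-\gamma - \log\lambda = \intoi t^{-1}(e^{-\lambda t}-\indiq_{t\leq 1})\,\dd t$ applied with $\lambda = v_d f(x)$, one obtains
\begin{equation*}
\E[H_N] - H(f) = \intrd f(x)\, A_N(x)\,\dd x, \qquad A_N(x) = \intoi \frac{(1-f(B(x,(t/N)^{1/d})))^N - e^{-v_df(x)t}}{t}\,\dd t.
\end{equation*}
The goal is to show $|A_N(x)| \leq C(x)\, N^{-\theta}$ with the $x$-dependent constant integrable against $f(x)\,\dd x$ as guaranteed by \eqref{condb1}.

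Next I change variables to $\e = (t/N)^{1/d}$ and split $A_N(x)$ at $\e = r_0$. On the bulk $\e \leq r_0$, the key analytic input is the Taylor-type estimate
\begin{equation*}
|f(B(x,\e)) - v_d f(x)\e^d| \leq C\, G_\beta(x)\, \e^{d+\beta}, \qquad \e \in (0,r_0],
\end{equation*}
which holds because the odd-order Taylor terms vanish by the central symmetry of $B(x,\e)$ under $y\mapsto 2x-y$ (relevant when $\beta>1$). Combining this with the pointwise inequalities $|(1-p)^N - e^{-Np}| \leq Np^2(1-\kappa)^{-1}e^{-Np}$ for $p\leq \kappa$ and $|e^{-a}-e^{-b}|\leq |a-b|\,e^{-\min(a,b)}$, and then invoking $m(x)\e^d \leq f(B(x,\e)) \leq v_d M(x)\e^d$ to carry out the change of variable $u = N v_d f(x)\e^d$, yields a bulk bound of order $R(x)\,m(x)^{-(d+\beta)/d}\, N^{-\beta/d}$ plus a secondary contribution $M(x)^2/(m(x) N)$ coming from the $Np^2$ correction.

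On the tail $\e > r_0$, I use $\kappa < 1$ and the monotonicity $f(B(x,\e)) \geq f(B(x,r_0))$ to bound the first survival by $(1 - f(B(x,r_0)))^N \leq e^{-Nf(B(x,r_0))}$, which decays exponentially. The remaining $\dd t/t$ integral is split further at $t\sim N|x|^d$: beyond that, Markov's inequality applied through the moment $\intrd |x|^q f(x)\,\dd x < \infty$ gives super-polynomial decay, while the intermediate range $[Nr_0^d, N|x|^d]$ contributes the $\log(2+|x|)$ factor. The net tail contribution is dominated by $\log(2+|x|)\, f(B(x,r_0))^{-\theta}\, N^{-\theta}$, matching the second term of \eqref{condb1}. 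Finally, to reach the stated rate $N^{-\theta}$ with $\theta \leq \beta/d$ (rather than the stronger $N^{-\beta/d}$), I interpolate between the bulk bound and a uniform-in-$N$ bound coming from the same tail estimate: raising the bulk constant $R(x)/m(x)^{(d+\beta)/d}$ to the power $\theta d/\beta \in (0,1]$ produces exactly the factor $R^{\theta d/\beta}(x)/m^{\theta(d+\beta)/\beta}(x)$ in \eqref{condb1}, and the analogous interpolation applied to the $Np^2$ correction yields the $R^{2\theta}(x)/m^{2\theta}(x)$ term. The main obstacle will be coordinating the threshold and executing these interpolations uniformly in $x$ while carefully matching each error term to the corresponding integrand in \eqref{condb1}.
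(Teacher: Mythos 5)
Your outline follows the same broad structure as the paper's proof: the integral representation $\E[H_N]-H(f)=\E[\intoi((1-f(B(X_1,(r/N)^{1/d})))^N-e^{-v_df(X_1)r})\,r^{-1}\dd r]$ of Lemma \ref{expr}, a split into bulk and tail, a Taylor expansion of $f(B(x,\e))$ on the bulk (the content of Lemma \ref{tedious}-(i)), and a Markov argument past $t\sim N|x|^d$ (matching Lemma \ref{lb2}); the tail analysis producing the $\log(2+|x|)/f(B(x,r_0))^\theta$ contribution is essentially sound. The gap is in the interpolation, which is the heart of the theorem. You split at the \emph{fixed} threshold $\e=r_0$ (i.e.\ $r=Nr_0^d$); carrying the Taylor estimate all the way up there yields a bulk bound of order $\frac{M^2(x)}{Nm^2(x)}+\frac{R(x)}{N^{\beta/d}m^{(d+\beta)/d}(x)}$, whose $x$-dependent constant is \emph{not} controlled by \eqref{condb1}. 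You then propose to interpolate this against ``a uniform-in-$N$ bound coming from the same tail estimate,'' but the tail estimate is $O(N^{-\theta})$, not uniform in $N$, while any genuine $N$-uniform bound on the bulk carries extra factors (such as $|\log m(x)|$) absent from \eqref{condb1}; an interpolation $\min\{XN^{-\beta/d},U\}\leq X^{\theta d/\beta}U^{1-\theta d/\beta}N^{-\theta}$ would need $U\equiv 1$ to produce exactly $R^{\theta d/\beta}/m^{\theta(d+\beta)/\beta}$, which is not what you have.

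The missing idea is an $N$- and $x$-dependent cutoff. The paper truncates the bulk at $h_N(x)=\min\{2\log N/m(x),\,N,\,N^{\beta/(d+\beta)}/R(x)^{d/(d+\beta)},\,\sqrt N/R(x)\}$ (each constraint tailored to one error term), splits the bias into $B^N_1+B^N_2+B^N_3+B^N_4$ accordingly, and bounds the bulk piece $B^N_1$ via $\int_0^{h_N(x)}e^{-v_df(x)r}r^\alpha\dd r\leq C(\min\{f(x)^{-1},h_N(x)\})^{1+\alpha}\leq C f(x)^{-s(1+\alpha)}h_N(x)^{(1-s)(1+\alpha)}$ with $s=\theta d/\beta\in(0,1]$. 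Substituting the explicit form of $h_N(x)$ is precisely what produces the $N^{-\theta}$ rate with the integrands $R^{\theta d/\beta}/m^{\theta(d+\beta)/\beta}$ and $R^{2\theta}/m^{2\theta}$ appearing in \eqref{condb1}, and the intermediate range $[h_N(x),N]$ is handled separately by the exponential decay $(1-f(B(x,(r/N)^{1/d})))^N\leq e^{-m(x)r}$ together with a case analysis of which constraint in $h_N(x)$ is binding (the sets $\Omega_N^1,\dots,\Omega_N^4$ of Lemma \ref{lb1}). Your closing remark that ``coordinating the threshold'' is the main obstacle identifies exactly this missing device; without it, the interpolation step does not close.
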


This provides at best a bias in $O(N^{-2/d})$
and this is a natural limitation: as seen in Subsection \ref{heu}, 
we need to approximate $v_df(x)r$ by $Nf(B(x,(r/N)^{1/d}))$, for which the error is of order $N^{-2/d}$.
In dimensions $1,2,3$, $N^{-2/d}=o(N^{-1/2})$ and we can hope that the bias is negligible
when compared to the standard deviation. But in higher dimension, it will be predominant. 
To get a smaller bias, 
one possibility is to use a Richardson extrapolation, which requires a polynomial development.

\begin{thm}\label{thb2}
Assume that $d\geq 3$, let $\beta \in (2,d]$ and set
$\ell=\max\{i\in\nn \; : \; 2i<\beta\}\geq 1$.
Assume that $f \in \CC^\beta(\rd)$. Put $R=M+G_\beta$.
Assume that $\kappa<1$ and that there is $q>0$ such that
\begin{equation}\label{condb2}
\intrd \Big( |x|^q +  \frac{\log(2+|x|)}{(f(B(x,r_0)))^{\beta/d}}+\frac{R^{2\beta/d}(x)}{m^{2\beta/d}(x)}
+\frac{R^{\beta/2}(x)}{m^{\beta/d+\beta/2}(x)}\Big)f(x)\dd x<\infty.
\end{equation}
Then there are $\lambda_1,\dots,\lambda_\ell \in \rr$ and $C>0$ such that
for all $N\geq 1$,
\begin{align*}
\Big|\E[H_N]-H(f)-\sum_{i=1}^{\ell} \frac{\lambda_i}{N^{2i/d}} \Big|\leq 
\frac C  {N^{\beta/d}}.
\end{align*}
\end{thm}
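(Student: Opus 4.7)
Following the strategy of Theorem \ref{thb1}, the plan is to express the bias via the conditional law of $Y_1^N$ given $X_1$. Setting $u_N(x,r)=f(B(x,(r/N)^{1/d}))$, one has $\Pr(Y_1^N>r\,|\,X_1=x) = (1-u_N(x,r))^N$. Comparing with $Z_x\sim\Exp(v_df(x))$ via the identity $\E[\log Z]=\intoi[\Pr(Z>r)-\indiq_{\{r\le 1\}}]\dd r/r$ yields
\[
\E[H_N]-H(f) \;=\; \intrd f(x)\,\Psi_N(x)\,\dd x,\qquad
\Psi_N(x) = \intoi \bigl[(1-u_N(x,r))^N-e^{-v_d f(x)r}\bigr]\frac{\dd r}{r},
\]
a formula which already drives the proof of Theorem~\ref{thb1}. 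The task is to expand $\Psi_N(x)$ several orders further.

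Since $f\in\CC^\beta(\rd)$ and the ball $B(0,\rho)$ is symmetric under $z\mapsto -z$, every odd-order term in the Taylor expansion of $f(x+z)$ around $x$ integrates to zero, giving for $\rho\le r_0$
\[
f(B(x,\rho))=v_d f(x)\rho^d+\sum_{i=1}^\ell a_i(x)\rho^{d+2i}+\rho^{d+\beta}\epsilon(x,\rho),\qquad |\epsilon(x,\rho)|\le C G_\beta(x),
\]
where each $a_i(x)$ is a linear combination of the components of $D^{2i}f(x)$ with weights given by ball-moments $\int_{B(0,1)}z^\alpha\dd z$. Substituting $\rho=(r/N)^{1/d}$ produces, on the good range $r\le Nr_0^d$,
\[
Nu_N(x,r)=\phi(x,r)+\eta_N(x,r),\quad \phi=v_d f(x)r,\quad \eta_N=\sum_{i=1}^\ell N^{-2i/d}a_i(x)r^{1+2i/d}+O\!\left(\tfrac{G_\beta(x) r^{1+\beta/d}}{N^{\beta/d}}\right).
\]
Next I would trade $(1-u_N)^N$ for $e^{-Nu_N}$: writing $(1-u)^N=e^{-Nu}\exp(-Nu^2/2-\cdots)$ and using $u_N\le C M(x)r/N$, the discrepancy is $O(N^{-1})=O(N^{-\beta/d})$ since $\beta\le d$. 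Then $e^{-Nu_N}-e^{-\phi}=e^{-\phi}\sum_{k\ge 1}(-\eta_N)^k/k!$, and substituting the expansion of $\eta_N$ and grouping by total exponent of $N^{-2/d}$ (a monomial $a_{i_1}(x)\cdots a_{i_k}(x)$ contributes to $N^{-2j/d}$ with $j=i_1+\cdots+i_k$) gives
\[
\Psi_N(x) = \sum_{j=1}^\ell \frac{\mu_j(x)}{N^{2j/d}}+O(N^{-\beta/d}),\qquad \mu_j(x)=\intoi e^{-\phi}\,q_j(x,r)\,\frac{\dd r}{r},
\]
where $q_j$ is a polynomial in $r$ whose coefficients are symmetric functions of the $a_{i_k}(x)$'s. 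Each $r$-integral in $\mu_j$ reduces to a Gamma integral $\intoi r^{s-1}e^{-\phi}\dd r=\Gamma(s)/(v_df(x))^s$, making $\mu_j(x)$ an explicit rational function of $f(x)$ and its derivatives. Setting $\lambda_j=\intrd f(x)\mu_j(x)\dd x$ completes the formal expansion.

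The main obstacle is controlling the remainder uniformly in $x$ and $N$. I would split the $r$-integral into three zones. (i) For $r>Nr_0^d$, the Taylor expansion fails; use $(1-u_N)^N\le(1-f(B(x,r_0)))^N\le C_p(Nf(B(x,r_0)))^{-p}$ combined with the integrability of $\log(2+|x|)/f(B(x,r_0))^{\beta/d}$ from \eqref{condb2} to absorb the tail. (ii) Near $r=0$ the $\dd r/r$ singularity is killed by the first-order vanishing of $(1-u_N)^N-e^{-\phi}$ there. (iii) On the bulk range, each cross-term arising from expanding $\eta_N^k$ integrates, after multiplication by $e^{-\phi}\dd r/r$, to an envelope of the form $R(x)^\alpha/m(x)^\gamma$; the two moments $R^{2\beta/d}/m^{2\beta/d}$ and $R^{\beta/2}/m^{\beta/d+\beta/2}$ in \eqref{condb2} are calibrated precisely to dominate respectively the squared Taylor-remainder contributions and the highest-order $\eta_N^\ell$ cross-term of size $N^{-2\ell/d}\cdot N^{-\beta/d}$. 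The delicate bookkeeping of matching every multinomial contribution to one of these two envelopes, while ensuring that $e^{-\phi}$ tames the polynomial growth in $r$ uniformly down to points where $f(x)$ is small, is where the bulk of the technical work lies.
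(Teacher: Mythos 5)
Your overall strategy is indeed the paper's: the integral formula $\E[H_N]-H(f)=\E[\intoi((1-\ba^N_1(r))^N-e^{-v_d\bff_1r})\frac{\dd r}r]$ (Lemma \ref{expr}), the symmetric Taylor expansion of $f(B(x,\rho))$ giving only even-power corrections, the trade $(1-u)^N\approx e^{-Nu}$, and the multinomial regrouping by powers of $N^{-2/d}$ to produce the coefficients $\lambda_j$ (Lemma \ref{tedious} and Lemma \ref{lb5}). There are, however, two genuine gaps in your sketch, both to do with how the $r$-integral is truncated.

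First, in your zone (i), the bound $(1-u_N(x,r))^N\le(1-f(B(x,r_0)))^N\le C_p(Nf(B(x,r_0)))^{-p}$ is \emph{constant in $r$}, so $\int_{Nr_0^d}^\infty(\cdot)\frac{\dd r}{r}$ diverges and the tail cannot be absorbed as you state. You need a bound decaying in $r$; this is exactly what the hypothesis $\intrd|x|^q f(x)\dd x<\infty$ supplies via Markov's inequality, $1-u_N(x,r)\le g(x)(N/r)^{q/d}$ with $g(x)=1\lor\E[|X_1-x|^q]$ (Lemma \ref{tbust}-(iv)), after which the far tail is split at $r\sim(2g(x))^{d/q}N$ as in Lemma \ref{lb2}. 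The moment $\log(2+|x|)/f(B(x,r_0))^{\beta/d}$ alone cannot close this.

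Second, and more structurally, the range on which your expansion is uniform cannot simply be $r\le Nr_0^d$. The error in replacing $(1-u_N)^N$ by $e^{-Nu_N}$ is of size $r^2M^2(x)/N$, which is $O(1)$ (let alone small) only when $r\lesssim\sqrt N/M(x)$; and the exponential $e^{-\eta_N}$ you then expand requires $|\eta_N|$ bounded, which needs $r\lesssim N^{\rho/(d+\rho)}/G_\beta(x)^{d/(d+\rho)}$. The paper encodes this into an $x$-dependent threshold $h_N(x)=\min\{2\log N/m(x),\,N,\,N^{\rho/(d+\rho)}/R(x)^{d/(d+\rho)},\,\sqrt N/R(x)\}$: the bulk $B^N_1=\E[\int_0^{\bh_{N1}}(\cdots)]$ is treated by Lemma \ref{tedious}, while the intermediate range $h_N(x)<r<N$ --- part of what you call the ``bulk'', where the expansion actually fails when $R(x)$ is large --- is the separate term $B^N_3$, bounded crudely by $\exp(-\bm_1 r)$ in Lemma \ref{lb1}. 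Without a truncation of this type the uniformity down to small $f(x)$ does not hold, and the two moments $R^{2\beta/d}/m^{2\beta/d}$ and $R^{\beta/2}/m^{\beta/d+\beta/2}$ in \eqref{condb2} are in fact calibrated to the four branches of that minimum rather than directly to the multinomial cross-terms.
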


\begin{rk}\label{lambda1}
The $\lambda_i$'s can be made explicit. 
In particular, if the norm is symmetric (that is, 
$|(x_{\sigma(1)},\dots,x_{\sigma(d)})|=|(x_1,\dots,x_d)|$ for any permutation $\sigma$),
there is $c>0$, depending only on $d$
and on the norm, such that 
$\lambda_1=c \intrd f^{-2/d-1}(x)|\nabla f(x)|^2\dd x$
provided $\lim_{|x|\to \infty}f^{-2/d}(x)|\nabla f(x)|=0$.
\end{rk}

To produce some confidence intervals, we need to estimate $\sigma^2(f)$, which is not very difficult.

\begin{prop}\label{mp}
Assume that $f$ is bounded and continuous and that for some $q>0$, some $\theta>0$,
\begin{equation}\label{condestiv}
\intrd \Big(|x|^q +\log^2 m(x)+\frac{\log^2(2+|x|)}{(f(B(x,r_0)))^\theta} 
\Big) f(x)\dd x<\infty.
\end{equation}
Fix $N\geq 1$, recall \eqref{rz} and put 
$$
V_N=\frac1{N+1} \sum_{i=1}^{N+1} \log^2 Y^N_i - \Big(\frac1{N+1} \sum_{i=1}^{N+1} \log Y^N_i\Big)^2 
+\chi_d-\frac{\pi^2}6.
$$
Then $V_N$ goes in probability to $\sigma^2(f)$, defined in Theorem \ref{thv}, as $N\to\infty$.
\end{prop}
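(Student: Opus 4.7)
The plan is to write $V_N = B_N - A_N^2 + \chi_d - \pi^2/6$, with $A_N := (N+1)^{-1}\sum_{i=1}^{N+1} \log Y_i^N$ and $B_N := (N+1)^{-1}\sum_{i=1}^{N+1}\log^2 Y_i^N$, and to show that $A_N$ and $B_N$ converge in probability to the limits predicted by the heuristic of Subsection \ref{heu}. With $\xi \sim \Exp(1)$ independent of $X \sim f$, set $A_\infty := \E[\log(\xi/(v_d f(X)))] = H(f) - \gamma - \log v_d$ and $B_\infty := \E[\log^2(\xi/(v_d f(X)))]$. A direct expansion using $\E[\log \xi] = -\gamma$ and $\E[\log^2 \xi] = \gamma^2 + \pi^2/6$ yields
\[
B_\infty - A_\infty^2 = \frac{\pi^2}{6} + \intrd f(x)\log^2 f(x)\dd x - H(f)^2,
\]
so that the target limit $\sigma^2(f)$ is recovered exactly, and the whole problem reduces to proving the two convergences in probability.

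First I would show that $\E[A_N] \to A_\infty$ and $\E[B_N] \to B_\infty$, by running the bias analysis underlying Theorem \ref{thb1} with the two test functions $\phi(y)=\log y$ and $\phi(y)=\log^2 y$. Namely, one writes $\E[\phi(Y_1^N)]$ as an integral and dominates the error of the approximation of the conditional law of $Y_1^N$ given $X_1=x$ by $\Exp(v_d f(x))$. Assumption \eqref{condestiv} provides exactly what is needed: the $\log^2 m(x)$ term controls the singularity of $\phi$ near $0$ (where $f$ is small), while the logarithmic-tail term controls the error at large $|x|$. Next, $\Var(A_N)\to 0$ follows from the standard covariance decomposition
\[
\Var(A_N) \leq \frac{1}{N+1}\E[\log^2 Y_1^N] + \Cov(\log Y_1^N, \log Y_2^N),
\]
the diagonal being $o(1)$ by the moment bound just obtained, and the covariance being handled by the approximate independence of $Y_1^N, Y_2^N$ when $X_1, X_2$ are far apart — the same estimates as in the proof of Theorem \ref{thv}.

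The main obstacle will be to show that $B_N - \E[B_N] \to 0$ in probability, because a naive Chebyshev argument would require $\sup_N \E[\log^4 Y_1^N] < \infty$, which is not ensured by \eqref{condestiv} (only a second moment of $\log m$ is assumed). To bypass this, I would introduce the truncation $\phi_K(y) := \log^2 y \land K^2$ and decompose $B_N = B_N^K + R_N^K$, where $B_N^K := (N+1)^{-1}\sum_i \phi_K(Y_i^N)$ and $R_N^K \geq 0$ is the remainder. Since $\phi_K$ is bounded, Chebyshev combined with the same covariance estimate as above gives $B_N^K - \E[B_N^K] \to 0$ in probability for each fixed $K$. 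For the remainder I would show that the family $\{\log^2 Y_1^N\}_N$ is uniformly integrable: by a Scheff\'e-type argument, this follows from the convergence in distribution $\log^2 Y_1^N \to (\log(\xi/(v_d f(X))))^2$, itself a consequence of the heuristic, together with the convergence of expectations $\E[\log^2 Y_1^N] \to B_\infty < \infty$ already established. Uniform integrability gives $\sup_N \E[R_N^K] \to 0$ as $K \to \infty$, and Markov's inequality applied to the non-negative $R_N^K$ closes the argument. Combining everything via the continuous mapping theorem for convergence in probability then yields $V_N \to \sigma^2(f)$.
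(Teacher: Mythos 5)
Your high-level strategy (truncate, exploit convergence in law plus bounded-continuity, then control the truncation error via uniform integrability) is in the same family as the paper's proof, and the identity $B_\infty - A_\infty^2 = \pi^2/6 + \intrd f\log^2 f - H(f)^2$ is correct. But two of your key steps would break down under the actual hypotheses of Proposition \ref{mp}, which are \emph{strictly weaker} than those used elsewhere in the paper, and this is exactly what makes the proof delicate.

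First, to establish $\E[\log Y_1^N] \to A_\infty$ and $\E[\log^2 Y_1^N]\to B_\infty$ you invoke ``the bias analysis underlying Theorem \ref{thb1}.'' That analysis requires $f\in\CC^\beta$, $\kappa<1$, and moment conditions on $R/m$ — none of which appear in \eqref{condestiv}, which only asks that $f$ be bounded and continuous with a few integrability conditions. The paper instead proves these convergences \emph{qualitatively}: it shows $Y_1^N\to\xi/(v_d\bff_1)$ in law (Step 2 of the proof), deduces $\E[\log^2_\e Y_1^N]\to\E[\log^2_\e(\xi/(v_d\bff_1))]$ for each fixed $\e$ since $\log^2_\e$ is bounded and continuous (Step 3), and then shows that the truncation error $\sup_{N\geq 1/\e}\E[|\log^2 Y_1^N-\log^2_\e Y_1^N|]$ vanishes as $\e\to 0$ via Cauchy--Schwarz applied to the diagonal terms $W^{11,\e}_N, W^{21,\e}_N, W^{31}_N$ from Lemmas \ref{lv1}, \ref{lv2}, \ref{lv3}, precisely the three lemmas whose assumptions \eqref{condestiv} covers.

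Second, for the covariance you appeal to ``the same estimates as in the proof of Theorem \ref{thv},'' but the cross-term control there (Lemma \ref{lv5}) requires $\intrd (M/m)(1+|\log m|)\,f\,\dd x<\infty$, which is part of \eqref{condv} but deliberately omitted from \eqref{condestiv}. The paper's proof sidesteps this: for the truncated version it gets $\sup_N N\Var(A_N^\e)<\infty$ directly from Bickel--Breiman (\cite[Theorem 3.5]{bb}), and it works throughout in $L^1$ rather than $L^2$, so that the untruncated covariance never needs to be estimated at all — once $\E[|A_N - A_\infty|]\to 0$, Markov's inequality gives convergence in probability. Your truncation $\phi_K(y)=\log^2 y\wedge K^2$ is in fact the same as $\log^2_\e$ with $K=\log(1/\e)$, so the Bickel--Breiman route is available to you too; you would need to invoke it explicitly, and you would need to replace the Theorem \ref{thb1} bias machinery by the convergence-in-law argument, to close the gaps.
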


\subsection{Corollaries}

We now sacrifice generality to present a more comprehensible statement
with a central limit theorem sufficient to produce
explicit (asymptotic) confidence intervals.
Weaker results can be derived from the theorems of the previous section
under weaker assumptions, we give an example at the end of the subsection.

\vip

If $d=1,2,3$, we set $H^{(d)}_N=H_N$ and $a_d=1$.
If $d\geq 4$, it is necessary to proceed to some extrapolation to get a bias in $o(N^{-1/2})$.
We consider $\ell=\lfloor d/4\rfloor$ and the real numbers
$\alpha_{0,d},\dots,\alpha_{\ell,d}$ satisfying 
$\sum_{k=0}^\ell \alpha_{k,d}=1$ and, for all $i=1,\dots,\ell$,
$\sum_{k=0}^\ell \alpha_{k,d} 2^{2ki/d} =0$.
For $N \geq \ell$, we put $n=\lfloor(N+1-\ell)/(2^{\ell+1}-1)\rfloor$, so that
$N+1 \geq \sum_{k=0}^\ell(2^k n +1)$.
We thus can split our 
$(N+1)$-sample into $\ell+1$
(independent) sub-samples of sizes $2^\ell n+1,2^{\ell-1}n+1,\dots,n+1$ and build with
these sub-samples the estimators $H^0_{2^\ell n},H^1_{2^{\ell-1} n},\dots H^\ell_{n}$
exactly as in \eqref{rz}-\eqref{mh}. Finally, we set 
\begin{equation}\label{richard}
H^{(d)}_N=\sum_{k=0}^\ell \alpha_{k,d}H^k_{2^{\ell-k}n}
\end{equation}
and put $a_d=(2-2^{-\ell})\sum_{k=0}^\ell \alpha_{k,d}^2 2^{k}$.

\vip

\begin{cor}\label{c1}
Fix $\e\in(0,1)$. Assume that $f\in \CC^{\nu}(\rd)$ with $\nu=1$ if $d=1$, 
$\nu=2$ if $d\in\{2,3\}$ and $\nu=d/2+\e$ if $d\geq 4$.
Fix $r_0>0$ such that $\kappa<1$, assume that $R=M+G_\nu$ is bounded and
that there is $c>0$ such that $m(x)\geq cf(x)$ for all $x \in \rd$.
Assume finally that 
\begin{align}\label{cc1}
\intrd |x|^{d+\e}f(x)\dd x<\infty
\end{align}
and that
\begin{align}\label{cc2}
\int_{\{f> 0\}}\!\!\!\!\!\!\sqrt {R(x)} f^{-\e}(x) \dd x <\infty \;\; \hbox{if $d\in\{1,2\}$}
\;\;  \hbox{and} \;\; 
\int_{\{f> 0\}} \!\!\!\!\!\!R^{d/4}(x) f^{1/2-d/4-\e}(x)\dd x <\infty \quad \hbox{if $d\geq 3$.}
\end{align}
Then 
$$
\sqrt{\frac N{V_N}}\Big(H_N^{(d)}-H(f)\Big) \longrightarrow \cN(0,a_d) 
\quad \hbox{in law as $N\to\infty$.}
$$
\end{cor}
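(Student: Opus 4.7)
The plan is to combine Theorems \ref{thv}, \ref{thb1}--\ref{thb2} and Proposition \ref{mp} via Slutsky's lemma. It suffices to prove that $\sqrt N(H_N^{(d)}-H(f))\to\cN(0,a_d\sigma^2(f))$ in law, since Proposition~\ref{mp} gives $V_N\to\sigma^2(f)$ in probability and Slutsky then yields $\sqrt{N/V_N}(H_N^{(d)}-H(f))\to\cN(0,a_d)$. For $d\in\{1,2,3\}$ no extrapolation is needed: I apply Theorem \ref{thv}(i) with some $\theta>1/2$ (available thanks to the hypotheses, see below) to get the CLT for $H_N=H_N^{(d)}$ around its mean with limiting variance $\sigma^2(f)$, and Theorem \ref{thb1} with $\beta=\nu$ and $\theta=\nu/d\in\{1,1,2/3\}$ to bound the bias by $O(N^{-\nu/d})=o(N^{-1/2})$. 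Together these give $\sqrt N(H_N^{(d)}-H(f))\to\cN(0,\sigma^2(f))$, matching the desired limit with $a_d=1$.

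For $d\geq 4$, I take $\beta=\nu=d/2+\e$, which lies in $(2,d]$ for $\e\in(0,1)$, and one checks $\lfloor d/4\rfloor=\max\{i\in\nn:2i<\beta\}$, matching the $\ell$ of Theorem~\ref{thb2}. The construction \eqref{richard} splits the sample into $\ell+1$ \emph{independent} blocks of sizes $N_k+1=2^{\ell-k}n+1$ with $n\sim N/(2^{\ell+1}-1)$, and applies the basic estimator on each. Theorem~\ref{thv}(i) applied to each block independently gives, by linearity and independence,
$$
\sqrt N\bigl(H_N^{(d)}-\E[H_N^{(d)}]\bigr)\longrightarrow \cN\Bigl(0,\;\sigma^2(f)\sum_{k=0}^\ell \alpha_{k,d}^2\lim_{N\to\infty}\tfrac{N}{N_k}\Bigr),
$$
and $\lim_N N/N_k=(2^{\ell+1}-1)/2^{\ell-k}=2^k(2-2^{-\ell})$, so the limiting variance equals $a_d\sigma^2(f)$. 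Theorem~\ref{thb2} applied on each block gives $\E[H^k_{N_k}]-H(f)=\sum_{i=1}^\ell\lambda_i N_k^{-2i/d}+O(N_k^{-\beta/d})$; writing $N_k^{-2i/d}=(2^\ell n)^{-2i/d}2^{2ki/d}$, the defining relations $\sum_k\alpha_{k,d}=1$ and $\sum_k\alpha_{k,d}2^{2ki/d}=0$ (for $i=1,\dots,\ell$) cancel every polynomial term, leaving $\E[H_N^{(d)}]-H(f)=O(N^{-\beta/d})=o(N^{-1/2})$ since $\beta/d=1/2+\e/d>1/2$. Combined with the CLT for the centered quantity, this proves $\sqrt N(H_N^{(d)}-H(f))\to\cN(0,a_d\sigma^2(f))$, and Slutsky concludes.

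The main obstacle is the technical verification that the hypotheses of Theorems \ref{thv}, \ref{thb1}--\ref{thb2} and Proposition~\ref{mp} actually follow from the more compact hypotheses of the corollary. The key reductions are: $R=M+G_\nu$ bounded forces $f$, $M$ and $G_\nu$ to be bounded; the sandwich $cf\leq m\leq v_d f$ (using $m\geq cf$ by assumption and $m\leq v_d f$ from the paper's general observation) yields $|\log m|\leq|\log f|+O(1)$, $M/m\leq\|R\|_\infty/(cf)$ and $f(B(x,r_0))\geq cr_0^d f(x)$. With these, the awkward ratios $R^{2\theta}m^{-2\theta}$, $R^{\theta d/\beta}m^{-\theta(d+\beta)/\beta}$, $(M/m)(1+|\log m|)$ and $\log^2(2+|x|)/f(B(x,r_0))^\theta$ that appear in \eqref{condv}, \eqref{condb1}, \eqref{condb2}, \eqref{condestiv} and are integrated against $f$, all collapse—upon taking $\theta=\nu/d$ (or $\theta=\beta/d$ in Theorem~\ref{thb2})—to expressions dominated either by $\sqrt R\,f^{-\e}$ (for $d\in\{1,2\}$) or by $R^{d/4}f^{1/2-d/4-\e}$ (for $d\geq 3$), which are integrable by \eqref{cc2}, with the polynomial moment \eqref{cc1} used to absorb the $\log^2(2+|x|)$ weight. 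The choice $\theta=\nu/d>1/2$ is precisely what justifies applying Theorem \ref{thv}(i) rather than (ii), and drives the choice $\nu=d/2+\e$ in high dimension.
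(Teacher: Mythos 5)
Your high-level architecture is exactly the paper's: reduce to $\sqrt N(H_N^{(d)}-H(f))\to\cN(0,a_d\sigma^2(f))$, apply Theorem~\ref{thv}(i) for the centred CLT and Theorem~\ref{thb1} (resp.\ \ref{thb2} plus Richardson cancellation) for the bias, then use Proposition~\ref{mp} and Slutsky. The $d\geq4$ blocking argument, the variance bookkeeping with $a_d=(2-2^{-\ell})\sum_k\alpha_{k,d}^2 2^k$ and the cancellation $\sum_k\alpha_{k,d}2^{2ki/d}=0$ are all correct and match the paper's Step~5.

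However, there is a genuine gap in the ``technical verification'' paragraph: your proposed choice $\theta=\nu/d$ (resp.\ $\theta=\beta/d$) is too large, and the integrability conditions of Theorems~\ref{thv}, \ref{thb1}, \ref{thb2} then do \emph{not} follow from the corollary's hypotheses. Concretely, for $d\in\{1,2\}$ you take $\theta=\nu/d=1$; but condition~\eqref{condv} and~\eqref{condb1} then contain $\int \log^2(2+|x|)\,f(B(x,r_0))^{-1}f(x)\,\dd x$, and the only available lower bound $f(B(x,r_0))\geq c\,r_0^d f(x)$ (from $m\geq cf$) reduces this to $\int_{\rd}\log^2(2+|x|)\,\dd x=\infty$. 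The same phenomenon occurs for $d=3$ with $\theta=2/3$: the required $\int f^{1/3}\log^2(2+|x|)\,\dd x$ is not implied by~\eqref{cc1} (the Hölder argument of Remark~\ref{supercool}(i) yields only $\int f^{1/2-\e'}$ for $\e'=\e/(4d+2)$, a strictly faster-decaying integrand). Likewise the ratio terms $R^{2\theta}/m^{2\theta}$ become $R^2/(c f)^2$ for $d\in\{1,2\}$, requiring $\int R^2 f^{-1}\,\dd x$, which \eqref{cc2} does not give. The fix, which is the actual content of the paper's Steps~1--3, is to take $\theta=1/2+\e/(4d+2)$ throughout --- just barely above $1/2$ (so Theorem~\ref{thv}(i) applies and the bias is $o(N^{-1/2})$), and small enough that, after the sandwich $cf\leq m\leq v_d f$, every integrand in \eqref{condv}, \eqref{condb1}, \eqref{condb2} is dominated by $\sqrt R f^{-\e}$ (if $d\in\{1,2\}$) or by $R^{d/4}f^{1/2-d/4-\e}$ (if $d\geq3$) using the inequalities $f\leq R$ and boundedness of $R$; and, for the $\log^2(2+|x|)$ term, so that Remark~\ref{supercool}(ii) applies (which requires precisely $1-\theta=1/2-\e/(4d+2)$). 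For $d\geq4$, the paper also takes $\beta=d/2+\e d/(4d+2)$ rather than $\beta=\nu=d/2+\e$, for the same reason. You need to carry out this domination explicitly; simply asserting that the conditions ``collapse'' under $\theta=\nu/d$ is false for every $d\in\{1,2,3\}$ and is not checked for $d\geq4$.
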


\begin{rk}
For $d\in\{4,5,6,7\}$, we have $\ell=1$, $\alpha_{0,d}=2^{2/d}/(2^{2/d}-1)$ 
and $\alpha_{1,d}=-1/(2^{2/d}-1)$. This gives $a_4\simeq 34.97$,
$a_5\simeq 54.97$, $a_6\simeq 79.65$, $a_7\simeq 109.01$. 
\end{rk}

These values are very large and it is not clear,
in practice, that the extrapolation is judicious, except if $N$ is very large.
However, this is the only way we have to propose a satisfying {\it theoretical} 
result when $d\geq 4$.

\vip

Let us discuss our assumptions.

\vip

$\bullet$ We assume some regularity on $f$. This is natural, since 
we need to use that $f$ is well-approximated by its means of on small balls.

\vip

$\bullet$ We suppose that $m \geq c f$ for some constant $c>0$.
Although this condition is not automatically verified, it is not so easy
to find a counter-example.

\vip

$\bullet$ We have some decay condition: $f$ needs to have a moment of order strictly larger
than $d$. This is not very stringent, but we believe this is a technical condition.
Observe that when $f$ is sufficiently regularly varying and positive so that 
$f\simeq R\simeq m$, in some  weak sense to be precised
(we think here of Gaussian distributions or of examples (a), (b), (c) below),
then our main conditions are $\intrd |x|^{d+\e}f(x)\dd x<\infty$
and $\intrd f^{1/2-\e}(x) \dd x <\infty$. And the first condition implies the second one 
(with actually a smaller $\e'>0$),
see Remark \ref{supercool}-(i).
To summarize, for a non-vanishing and very regularly varying $f$, our main restriction
is \eqref{cc1}, which is not very stringent but probably technical.

\vip

$\bullet$ When $d\in\{1,2\}$, $f$ is plainly allowed to vanish:
for example, Corollary \ref{c1} applies to any compactly supported $f$,
provided it is of class $C^d(\rd)$ (whence $R$ is bounded), provided $m\geq c f$ and provided
there is $\e>0$ such that $\int_{\{f>0\}} f^{-\e}(x)\dd x<\infty$. This is rather general.
On the contrary, $f$ cannot vanish if $d\geq 10$. Indeed, consider
$x_0$ on the boundary of $\{f>0\}$, so that $R(x_0)>0$. Since $f\in\CC^{d/2}(\rd)$ by assumption,
we deduce that $f(x_0+h)\leq C |h|^{d/2}$ for $h\in\rd$ small enough,
so that $R(x_0+h)f^{1/2-d/4-\e}(x_0+h)\geq c |h|^{d(1/2-d/4-\e)/2}$, which cannot be integrable
around $h=0$ if $(d/4-1/2)/2\geq 1$, i.e. if $d\geq 10$.
For $d\in\{3,\dots,9\}$, $f$ is allowed to vanish with restrictions.
Again, we believe these restrictions are technical.

\vip

Finally, let us state a corollary with a bad rate of convergence
but with very few assumptions.

\begin{cor}\label{c2}
Assume that $f\in \CC^\nu(\rd)$ with $\nu=\min\{d,2\}$.
Fix $r_0>0$ such that $\kappa<1$, assume that $R=M+G_\nu$ is bounded and
that there is $c>0$ such that $m(x)\geq cf(x)$ for all $x \in \rd$.
Assume finally \eqref{cc1} for some $\e>0$, that 
$\int_{\{f> 0\}} M(x)|\log f(x)|\dd x<\infty$ and that 
\begin{align}\label{cc3}
\intrd \sqrt{R(x)}  \dd x <\infty \quad \hbox{if $d=1$}
\quad  \hbox{and} \quad
\intrd R^{d/(2+d)}(x)\dd x <\infty \quad \hbox{if $d\geq 2$.}
\end{align}
Then there is a constant $C>0$ such that for all $N\geq 1$, 
$\E[(H_N-H(f))^2]\leq C N^{-1}$ if $d=1$ and 
$\E[(H_N-H(f))^2]\leq C N^{-4/(d+2)}$ if $d\geq 2$.
\end{cor}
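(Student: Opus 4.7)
The plan is the standard bias-variance decomposition
$$
\E[(H_N-H(f))^2]\;=\;(\E[H_N]-H(f))^2+\Var H_N,
$$
with the bias controlled by Theorem \ref{thb1} and the variance by Theorem \ref{thv}(ii), both applied with a common exponent $\theta$. Writing $\beta=\nu=\min\{d,2\}$, the balanced choice is $\theta=\beta/(d+\beta)$, that is $\theta=1/2$ for $d=1$ and $\theta=2/(d+2)$ for $d\ge 2$. Both values lie in $(0,\beta/d]\cap(0,1/2]$, so both theorems apply and give bias $O(N^{-\theta})$ and variance $O(N^{-2\theta})$; the sum is of order $N^{-2\theta}$, equal to $N^{-1}$ when $d=1$ and $N^{-4/(d+2)}$ when $d\ge 2$, as announced.

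The choice $\theta=\beta/(d+\beta)$ is forced by the most delicate integrand of \eqref{condb1}: since $m\ge cf$ and $\theta(d+\beta)/\beta=1$,
$$
\frac{R^{\theta d/\beta}(x)}{m^{\theta(d+\beta)/\beta}(x)}\,f(x)\;\le\;C\,R^{d/(d+\beta)}(x)\,f^{1-\theta(d+\beta)/\beta}(x)\;=\;C\,R^{d/(d+\beta)}(x),
$$
which is integrable by exactly \eqref{cc3}. The companion term $R^{2\theta}/m^{2\theta}\cdot f\le CR^{2\theta}f^{1-2\theta}\le CR$ (using $f\le R$ and $R$ bounded) is then handled via $\int R\,\dd x<\infty$, a consequence of boundedness of $R$ together with $\int R^{d/(d+\beta)}\,\dd x<\infty$. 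The $|x|^qf$-terms are immediate from \eqref{cc1}. For the $\log(2+|x|)/f(B(x,r_0))^\theta$-terms in \eqref{condb1} and their squares in \eqref{condv}, the lower bound $f(B(x,r_0))\ge r_0^d m(x)\ge cr_0^d f(x)$ reduces them to bounding $\int\log^j(2+|x|)f^{1-\theta}(x)\,\dd x$ for $j=1,2$; a dyadic annular decomposition $\{2^k\le|x|<2^{k+1}\}$ together with Jensen on each annulus and the tail estimate from \eqref{cc1} produces a convergent geometric series.

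The remaining integrand of \eqref{condv}, $(M/m)(1+|\log m|)f$, is bounded via $m\ge cf$ and $cf\le m\le v_df$ by $CM(1+|\log f|)$, hence integrable by the hypothesis $\int M|\log f|\,\dd x<\infty$ together with $\int M\,\dd x<\infty$ (which follows again from $M\le R$ and the bound on $\int R$). The main obstacle is the $\log^2 m\cdot f$-term of \eqref{condv}: the inequalities $cf\le m\le v_df$ reduce it to $\int f\log^2 f\,\dd x$, which is \emph{not} directly supplied by the hypotheses, since $\int M|\log f|\,\dd x<\infty$ yields only a first-order log moment. We close this gap by an elementary interpolation. On $\{f\ge\eta\}$ (any fixed $\eta>0$), $|\log f|$ is bounded since $f$ is, and the integral is trivially finite by $\int f\,\dd x=1$. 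On $\{0<f\le\eta\}$ with $\eta<1$, fix any $s\in(0,\beta/(d+\beta)]$ and observe that $f|\log f|^2\le C_{\eta,s}\,f^{1-s}\le C'_{\eta,s}\,f^{d/(d+\beta)}$, since $1-s\ge d/(d+\beta)$ and $f\le\eta<1$; using $f\le R$, this is dominated by $R^{d/(d+\beta)}$, integrable by \eqref{cc3}. Combining the bias and variance bounds then yields the announced rates.
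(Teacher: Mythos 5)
Your proposal is correct and follows essentially the same route as the paper: bias--variance decomposition, Theorem~\ref{thb1} for the bias and Theorem~\ref{thv}(ii) for the variance, both with the same exponent $\theta=\nu/(d+\nu)$ (i.e.\ $1/2$ for $d=1$, $2/(d+2)$ for $d\ge 2$), and the same reductions $m\ge cf$, $f\le R$, $R$ bounded to convert \eqref{condv} and \eqref{condb1} into \eqref{cc1}, \eqref{cc3} and $\int M|\log f|<\infty$. The only cosmetic difference is in the bookkeeping: the paper packages the $\log^2 m$ and $\log^2(2+|x|)/f(B(x,r_0))^\theta$ terms into Remark~\ref{supercool}-(ii) (a H\"older estimate driven by the moment \eqref{cc1}), whereas you re-derive them directly via a dyadic decomposition and a small interpolation $f\log^2 f\le C f^{d/(d+\nu)}\le CR^{d/(d+\nu)}$ on $\{f<\eta\}$; both give the same conclusion.
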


Any $f\in C^{\min\{d,2\}}(\rd)$ with compact support such that $m\geq cf$ and 
$\int_{\{f> 0\}} |\log f(x)|\dd x<\infty$
satisfies the assumptions of Corollary \ref{c2}.
It is of course possible, using Theorems \ref{thv}, \ref{thb1} and \ref{thb2}
to derive intermediate statements between Corollaries \ref{c1} and \ref{c2}.

\subsection{Examples}\label{ex} 
Recall that we are happy when Corollary \ref{c1} applies, 
since then we have an explicit central limit theorem.
Some arguments are given at the end of the paper.

\vip

\noindent (a) If $f(x)=(2\pi)^{-d/2}\exp(-|x|^2/2)$ (or any other non-degenerate normal distribution), 
Corollary \ref{c1} applies.

\vip

\noindent (b) If $f(x)=c_{d,a} e^{-(1+|x|^2)^{a/2}}$ for some $a>0$, then 
Corollary \ref{c1} applies.

\vip

\noindent (c) If $f(x)=c_{d,a}(1+|x|^2)^{-(d+a)/2}$ with $a>d$, then
Corollary \ref{c1} applies.

\vip

\noindent (d) If $f(x)=c_{d,a}|x|^{a}e^{-|x|}$ (or $f(x)=c_{a}x^{a}e^{-x}\indiq_{\{x>0\}}$ if $d=1$),
then Corollary \ref{c1} applies when $d=1, a\geq 1$, when
$d=2$, $a\geq 2$, when $d=3$, $a\in [2,12)$ and when $d\in\{4,\dots,9\}$, $a\in (d/2,4d/(d-2))$.
For any $d\geq 3$, $a\geq 2$, Corollary \ref{c2} applies.

\vip

\noindent (e) If $f(x)=c_{a,b} \prod_{i=1}^d x_i^{a_i}(1-x_i)^{b_i}\indiq_{\{x_i\in [0,1]\}}$ for some
$a=(a_1,\dots,a_d)$ and $b=(b_1,\dots,b_d)$ both in $(0,\infty)^d$,
we set $\tau=\min\{a_1,b_1,\dots,a_d,b_d\}$ and $\mu=\max\{a_1,b_1,\dots,a_d,b_d\}$.
Corollary \ref{c1} applies if $d=1$, $\tau\geq 1$ or $d=2$, $\tau\geq 2$ 
or $d=3$, $2\leq \tau \leq \mu <4$. For any $d\geq 3$, $\tau\geq 2$, Corollary \ref{c2} applies.

\vip

\noindent (f) If $d=1$ and $f(x)=c_p x^p|\sin(\pi/x)|\indiq_{\{x\in (0,1)\}}$
for some $p\geq 2$, then Corollary \ref{c1} applies.

\vip

This last example is of course far-fetched, but it shows
that our results apply to densities with many zeroes (in dimensions $1$ and $2$).

\subsection{Comparison with previous results}\label{ccc}

We will use the results of Bickel and Breiman \cite{bb}: for $\e\in(0,1)$,
for $\log_\e$ a bounded approximation of $\log$ and for $H_N^\e$ the corresponding
cutoff version of \eqref{mh}, it holds that $\sqrt N(H_N^\e-\E[H_N^\e])\to\cN(0,\sigma_\e^2(f))$.
This paper is thus very interesting and applies to very general densities $f$
but does not include the entropy as an admissible functional.
Furthermore, they do not quantify the bias.

\vip

As already mentioned, 
the results of Hall and Morton \cite{hm} apply to densities of which we know quite precisely the
tail behavior. This is a rather stringent condition.

\vip

The results of Hall \cite{h,h2}, van Es \cite{vE}, Tsybakov and van der Meulen \cite{tvdm}
and El Haje and Golubev \cite{eg} only concern the one-dimensional case. Furthermore,
\cite{h,h2,vE} apply only to densities bounded below on their (compact) support. 
A lot of regularity is assumed in \cite{tvdm}: in our list of examples, on (a)
and (b) (with $a>1$) are included. Still when $d=1$, it is difficult to compare our results
with those of \cite{eg}, because in both cases, the assumptions are not very transparent.
Let us however mention that their study seems to include example (c) for all $a>0$
(with a CLT), while we have to assume that $a>1$. On the contrary, they suppose 
that $f>0$ has a finite number of connected components, so that they cannot deal with example (f).

\vip

When $d\geq 2$, the only quantified consistency result seems to be that of P\'al,
P\'oczos and Szepesv\'ari. They assume that $f$ is compactly supported and study other notions of 
entropy, but, if we extrapolate, we find some estimate looking like $|H_N-H(f)|\leq CN^{-1/(2d)}$
with high probability. Recall that the bias is actually in $N^{-2/d}$.

\vip

As a conclusion, it seems we provide the first root $N$ and asymptotic normality
result for a general entropy estimator in dimension $d\geq 2$.

\vip

However, our assumptions are not very transparent and probably far from optimal,
at least when $d\geq 3$. Also, our proofs are rather tedious: quoting
Bickel and Breiman \cite{bb}, ``we believe this
is due to the complexity of the problem''.

\subsection{Plan of the paper}
In the next section, we compute some conditional laws and prove some
easy estimates of constant use.
Section \ref{vv} is devoted to the proof of Theorem \ref{thv} (central limit theorem).
We prove Proposition \ref{mp} in Section \ref{ee} (estimation of the variance).
In Section \ref{dd}, we study very precisely how well $f$ is approximated its mean on a small ball.
In Section \ref{bb}, we handle the proofs of Theorems \ref{thb1} and \ref{thb2} (concerning the bias).
Finally, the corollaries are verified and we discuss the examples in Section \ref{cc}.

\subsection{Notation}
We recall that we write $f$ both for the law of $X_1$ and for its density.
The functions $m$, $M$, $G_\beta$ are defined in \eqref{mM}-\eqref{gbeta}.
We introduce some shortened notation:
\begin{gather*}
\bff_1=f(X_1), \quad \bff_2=f(X_2), \quad
\bm_1=m(X_1), \quad \bm_2=m(X_2), \quad \bM_1=M(X_1),\quad \bM_2=M(X_2),\\
\ba^N_1(r)=f(B(X_1,(r/N)^{1/d})), \quad \ba^N_2(r)=f(B(X_2,(r/N)^{1/d})), \\
\bb^N_{12}(r,s)= f(B(X_1,(r/N)^{1/d})\cup B(X_2,(s/N)^{1/d})).
\end{gather*}
We write $C$ for a finite constant used in the upperbounds and $c$ for a positive constant
used in the lowerbounds. Their values do never depend on $N$, but 
are allowed to change from line to line.

\vip

{\bf During the whole proof, we assume that $r_0=1$ for simplicity.}

\section{Preliminaries}\label{pp}

To start with, we compute some conditional laws.

\begin{lem}\label{pdd}
For $N\geq 1$ and $r,s>0$, we have
\begin{gather*}
\Pr(Y^N_1>r \,|\, X_1)=(1-\ba^N_1(r))^N,\\
\Pr(Y^N_1>r,Y^N_2>s \,|\, X_1,X_2)=
\indiq_{\{|X_1-X_2|>(\frac{r\lor s}N )^{1/d}\}}
(1- \bb^N_{12}(r,s))^{N-1}.
\end{gather*}
\end{lem}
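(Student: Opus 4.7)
My plan is to translate each event into an intersection of independent constraints on the remaining sample points and then use i.i.d.\ structure.

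For the first identity, the key observation is the equivalence
$Y^N_1 > r \Leftrightarrow R^N_1 > (r/N)^{1/d} \Leftrightarrow |X_1-X_j| > (r/N)^{1/d}$ for every $j = 2,\dots,N+1$, i.e.\ $X_j \notin B(X_1,(r/N)^{1/d})$ for every $j\ge 2$. Conditionally on $X_1$, the sample $X_2,\dots,X_{N+1}$ is i.i.d.\ with law $f$ and independent of $X_1$, so each factor contributes $1 - f(B(X_1,(r/N)^{1/d})) = 1-\ba^N_1(r)$; by independence across $j$, the product gives $(1-\ba^N_1(r))^N$.

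For the second identity, the joint event $\{Y^N_1>r,\ Y^N_2>s\}$ is the intersection of the constraint coming from the pair $(X_1,X_2)$ themselves and the constraints coming from $X_3,\dots,X_{N+1}$. The pair constraint is $|X_1-X_2| > (r/N)^{1/d}$ and $|X_1-X_2| > (s/N)^{1/d}$ simultaneously, which collapses to $|X_1-X_2| > ((r\vee s)/N)^{1/d}$ and produces the indicator factor. For each $j \ge 3$, the constraint is $X_j \notin B(X_1,(r/N)^{1/d})$ and $X_j \notin B(X_2,(s/N)^{1/d})$, i.e.\ $X_j$ avoids the union $B(X_1,(r/N)^{1/d})\cup B(X_2,(s/N)^{1/d})$. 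Conditionally on $(X_1,X_2)$, the variables $X_3,\dots,X_{N+1}$ are i.i.d.\ with law $f$ and independent of $(X_1,X_2)$, so each contributes $1-\bb^N_{12}(r,s)$, and the $N-1$ such factors combine to $(1-\bb^N_{12}(r,s))^{N-1}$.

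There is essentially no obstacle here: the lemma is just a careful bookkeeping exercise in conditioning on the two ``special'' points and noting that the remaining $N$ (respectively $N-1$) points are i.i.d.\ with law $f$ and independent of the conditioning. The only thing to be mildly careful about is the pairwise constraint in the second formula, which merges the two separate lower bounds on $|X_1-X_2|$ into a single one via the max.
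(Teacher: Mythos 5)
Your argument is correct and is essentially identical to the paper's proof: both express $\{Y^N_1>r\}$ (resp.\ $\{Y^N_1>r,\,Y^N_2>s\}$) as the intersection of the avoidance events for the remaining $N$ (resp.\ $N-1$) i.i.d.\ points together with the pairwise constraint, and then use conditional independence. No gap.
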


\begin{proof}
By definition, see \eqref{rz},
\begin{align*}
\{Y^N_1>r\}=\bigcap_{i=2}^{N+1} \big\{X_i \notin B(X_1,(r/N)^{1/d}) \big\}.
\end{align*} 
The first claim follows. Next,
$$
\{Y^N_1>r,Y^N_2>s\}=\big\{|X_1-X_2|>\big(\frac{r\lor s}N \big)^{1/d}\big\}\bigcap 
\bigcap_{i=3}^{N+1} \big\{X_i \notin B(X_1,(r/N)^{1/d})\cup B(X_2,(s/N)^{1/d}) \big\}.
$$
This implies the second claim.
\end{proof}

We next verify some estimates of constant use.

\begin{lem}\label{tbust}
(i) For all $N\geq 1$, all $r\in[0,N]$, we have $\ba^N_1(r)\leq v_d \bM_1 r/N$.
\vip
(ii) For all $N\geq 1$, all $r\in[0,N]$, we have $(1-\ba^N_1(r))^N\leq \exp(- \bm_1r)$.
\vip
(iii) For all $N\geq 2$, all $r\in[0,N]$, we have $(1-\ba^N_1(r))^{N-1}\leq \exp(- \bm_1r/2)$.
\vip
(iv) If $\intrd|x|^q f(x)\dd x<\infty$, put
$g(x)=1\lor \E[|X_1-x|^q]$, which is bounded by $C(1+|x|^q)$. For all $N\geq 1$,
all $r>0$, we have $1-\ba^N_1(rN)\leq \bg_1/r^{q/d}$, where $\bg_1=g(X_1)$.
\end{lem}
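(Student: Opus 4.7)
My plan is to handle the four estimates separately but uniformly: each follows from a direct invocation of the defining formulas in \eqref{mM} or, in (iv), from Markov's inequality applied to the moment hypothesis. The common geometric observation, exploiting the convention $r_0=1$, is that whenever $r\leq N$, the radius $(r/N)^{1/d}\leq 1=r_0$, which is precisely the regime in which $M$ and $m$ provide useful pointwise upper and lower bounds on the local mass of $f$.

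For (i), I bound $f$ on the ball $B(X_1,(r/N)^{1/d})\subset B(X_1,2r_0)$ by $\bM_1$ according to \eqref{mM} and multiply by the Lebesgue measure of the ball, which equals $v_d(r/N)$. For (ii), since $\e:=(r/N)^{1/d}\in(0,r_0)$ when $r<N$, the definition of $\bm_1$ yields $\ba^N_1(r)\geq \bm_1\e^d=\bm_1 r/N$; the borderline $r=N$ is handled by passing to the limit $\e\uparrow 1$ (the sphere has zero Lebesgue measure). Inserting this lower bound into the elementary inequality $(1-x)^N\leq e^{-Nx}$ for $x\in[0,1]$ gives the claim (the bound is trivial when $\bm_1 r/N\geq 1$). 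Part (iii) follows identically except that $(1-x)^{N-1}\leq e^{-(N-1)x}$, and for $N\geq 2$ we have $(N-1)/N\geq 1/2$, which explains the factor $1/2$ in the exponent.

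For (iv), I start from the identity $1-\ba^N_1(rN)=\Pr(|X_2-X_1|>r^{1/d}\mid X_1)$, valid because $X_2$ is an independent copy of $X_1$ with law $f$. Markov's inequality applied to $|X_2-X_1|^q$ then yields $\E[|X_2-X_1|^q\mid X_1]/r^{q/d}\leq \bg_1/r^{q/d}$, since $g(x)\geq \E[|X_1-x|^q]$ by construction. The upper bound $g(x)\leq C(1+|x|^q)$ follows from the convexity inequality $|y-x|^q\leq C_q(|y|^q+|x|^q)$ and the hypothesis $\intrd|y|^qf(y)\dd y<\infty$. No part of the argument presents a serious obstacle: this is a preparatory lemma whose difficulty is purely bookkeeping, with the only minor care going into the borderline case $r=N$ in (ii)--(iii).
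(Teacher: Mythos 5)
Your proof is correct and follows essentially the same route as the paper: bound $f$ above by $\bM_1$ on the small ball for (i), bound $\ba^N_1(r)$ below by $\bm_1 r/N$ and insert into $(1-x)^N\leq e^{-Nx}$ for (ii)–(iii), and Markov's inequality for (iv). The extra care you take with the borderline case $r=N$ (where $\e=1$ is not in the open range of the infimum defining $m$, requiring a limiting argument) is a correct and slightly more scrupulous point that the paper's proof silently glosses over.
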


\begin{proof}
For (i), we use that $\sup_{B(X_1,(r/N)^{1/d})}f\leq \bM_1$ since $r\leq N$.
Consequently, $\ba^N_1(r)\leq \bM_1\Leb(B(X_1,(r/N)^d))=v_d \bM_1 r/N$.
For (ii), we write $(1-\ba^N_1(r))^N\leq \exp(-N\ba^N_1(r))$ and we use that 
$N\ba^N_1(r)\geq N \bm_1 r/N=\bm_1 r$.
Point (iii) is checked similarly, using that $(N-1)/N\geq 1/2$ for all $N\geq 2$.
By the Markov inequality, we have $1-f(B(x,r^{1/d}))
=\Pr(|X_1-x|> r^{1/d})\leq g(x)/r^{q/d}$ and (iv) follows from the fact that
$\ba^N_1(rN)=f(B(X_1,r^{1/d}))$.
\end{proof}

\section{Variance and central limit theorem}\label{vv}

This section is devoted to the proof of Theorem \ref{thv}.
We first cut $H_N$ in pieces.

\begin{lem}\label{dec}
For $\e\in(0,1]$ and $y>0$, we introduce $\log_\e y=\log(\e\lor y \land \e^{-1})$.
Then we can write, for any $N\geq 1/\e$, $H_N=H_N^\e +K_N^{1,\e}+K_N^{2,\e}+K_N^3$, where 
\begin{gather*}
H_N^\e= \frac 1 {N+1}\sum_{i=1}^{N+1} \log_\e Y^N_i +\gamma+\log v_d,\quad 
K_N^{1,\e}= \frac 1 {N+1}\sum_{i=1}^{N+1} \log [(Y^N_i/\e)\land 1], \\
K_N^{2,\e}= \frac 1 {N+1}\sum_{i=1}^{N+1} \log [1\lor (\e Y^N_i)\land (\e N)],\quad
K_N^{3}= \frac 1 {N+1}\sum_{i=1}^{N+1} \log [(Y^N_i/N)\lor 1].
\end{gather*}
\end{lem}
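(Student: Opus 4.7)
The claim is a purely algebraic identity, so the plan is to verify it pointwise and sum. Since the additive constants $\gamma+\log v_d$ appear identically in $H_N$ and $H_N^\e$ and therefore cancel, it suffices to check that, for every $y>0$ and every $N\geq 1/\e$,
\begin{equation*}
\log y = \log_\e y + \log\bigl[(y/\e)\land 1\bigr] + \log\bigl[1\lor(\e y)\land(\e N)\bigr] + \log\bigl[(y/N)\lor 1\bigr],
\end{equation*}
and then apply this identity to $y=Y^N_1,\dots,Y^N_{N+1}$, average, and restore the constants.

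To establish the pointwise identity, I would split $(0,\infty)$ into four intervals according to the thresholds $\e\leq\e^{-1}\leq N$ (the second inequality is exactly the assumption $N\geq 1/\e$). On $(0,\e)$ only the first correction is nonzero and equals $\log(y/\e)$, which combined with $\log_\e y=\log\e$ reproduces $\log y$. On $[\e,\e^{-1}]$ all three corrections vanish and $\log_\e y=\log y$. On $(\e^{-1},N]$ only the middle correction is active, equal to $\log(\e y)$, and adding $\log_\e y=-\log\e$ again yields $\log y$. Finally on $(N,\infty)$ the middle correction equals $\log(\e N)$ and the last one equals $\log(y/N)$, and summing with $\log_\e y=-\log\e$ once more gives $\log y$.

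There is no substantive obstacle: this lemma is a bookkeeping device designed to isolate the bounded contribution $\log_\e Y^N_i$ from three distinct sources of unboundedness, corresponding respectively to the events $\{Y^N_i<\e\}$ (where $\log$ diverges at $0$), $\{\e^{-1}<Y^N_i\leq N\}$ (moderately large spacings), and $\{Y^N_i>N\}$ (tail spacings) — these three pieces will then be estimated separately in the sequel. The only minor care required is to parse the nested operators in $K_N^{2,\e}$ as $1\lor[(\e Y^N_i)\land(\e N)]$, and to notice that the hypothesis $N\geq 1/\e$ is used precisely to ensure $\e N\geq 1$, so that the formula behaves as designed on the event $\{Y^N_i>N\}$.
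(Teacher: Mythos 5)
Your proof is correct and follows essentially the same route as the paper's: cancel the additive constants, reduce to the pointwise identity in $y$, and verify it by splitting $(0,\infty)$ at the thresholds $\e$, $\e^{-1}$, $N$ (the paper uses the same four intervals, with immaterial differences in the choice of open versus closed endpoints). Nothing to add.
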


\begin{proof}
Recall that $N\geq 1/\e$. It suffices to note
that for all $y\in (0,\infty)$, we have 
$$\log y = \log_\e y + \log [(y/\e)\land 1]+
\log [1\lor (\e y)\land (\e N)]+\log [(y/N)\lor 1].$$
This is easily checked separating the cases $y\in (0,\e]$, $y\in (\e,1/\e]$,
$y\in(1/\e,N]$ and $y\in (N,\infty)$.
\end{proof}

We next apply the result of Bickel and Breiman \cite{bb}.

\begin{prop}\label{abb}
Assume that $f$ is bounded and continuous and fix $\e \in (0,1]$. We then have
$\sup_{N\geq 1/\e} N \Var H^\e_N <\infty$. Furthermore, 
$\sqrt N (H^\e_N-\E[H^\e_N])$ goes in law to $\cN(0,\sigma^2_\e(f))$
as $N\to \infty$, where $\sigma_\e^2(f)=A_\e+B_\e+C_\e$, with
\begin{gather*}
A_\e=\E\Big[\intoi \log_\e^2(r^d/\bff_1)\mu_0(\dd r) \Big] 
- \E\Big[\intoi \log_\e(r^d/\bff_1)\mu_0(\dd r)\Big]^2
,\\
B_\e=\E\Big[\intoi\intoi \log_\e(r_1^d/\bff_1)\log_\e(r_2^d/\bff_2)\mu_1(\dd r_1,\dd r_2)\Big],\\
C_\e=\E\Big[\intoi\intoi \log_\e(r_1^d/\bff_1)\log_\e(r_2^d/\bff_1)\mu_2(\dd r_1,\dd r_2)\Big],
\end{gather*}
where the finite (signed) measures $\mu_0$ on $[0,\infty)$ and $\mu_1,\mu_2$ on $[0,\infty)^2$
are defined by 
\begin{gather*}
\mu_0([0,r])=1-e^{-v_d r^d}, \quad 
\mu_1([0,r_1]\times[0,r_2])=e^{-v_d(r_1^d+r_2^d)}[v_d r_1^d+v_dr_2^d-v_d^2r_1^dr_2^d ],\\
\mu_2([0,r_1]\times[0,r_2])=e^{-v_d(r_1^d+r_2^d)} [T(v_d r_1^d,v_d r_2^d)-v_d(r_1\lor r_2)^d],
\end{gather*}
with $T:(0,\infty)^2\mapsto \rr_+$ defined by
$T(v_dr^d,v_ds^d)=\int_{B(0,r+s)\setminus B(0,r\lor s)} [\exp(\int_{B(0,r)\cap B(y,s)}\dd z) -1] \dd y$.
\end{prop}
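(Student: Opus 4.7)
The approach is to apply directly the central limit theorem of Bickel and Breiman \cite{bb}, which concerns sums of bounded continuous functions of the nearest-neighbor distances. Here the cost function $\log_\e$ is bounded (it takes values in $[\log\e,-\log\e]$) and continuous on $(0,\infty)$, and $f$ is bounded and continuous by assumption, so \cite{bb} applies and yields both the variance bound $\sup_{N\geq 1/\e} N \Var H_N^\e<\infty$ and the asymptotic normality $\sqrt{N}(H_N^\e-\E[H_N^\e]) \to \cN(0,\sigma_\e^2(f))$ in law, for some $\sigma_\e^2(f)\geq 0$.

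The substantive task is to identify this variance with $A_\e+B_\e+C_\e$. Using exchangeability of the sample,
\[
N\Var H_N^\e = \frac{N}{N+1}\Var(\log_\e Y_1^N) + \frac{N^2}{N+1}\Cov(\log_\e Y_1^N,\log_\e Y_2^N).
\]
The diagonal term I would handle via the first identity of Lemma \ref{pdd}: continuity and boundedness of $f$ give $\Pr(Y_1^N>r\mid X_1=x)=(1-\ba_1^N(r))^N \to \exp(-v_df(x)r)$, so by dominated convergence (the cutoff makes $\log_\e Y_1^N$ uniformly bounded) the variance converges to that of $\log_\e(\xi/(v_df(X_1)))$ with $\xi\sim\Exp(1)$ independent of $X_1$. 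A change of variable $r^d=\xi/v_d$, i.e.\ $r$ with law $\mu_0$, rewrites the result as $A_\e$.

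For the covariance term I would use the joint conditional survival from Lemma \ref{pdd} and rescale $w=N^{1/d}(X_2-X_1)$, which introduces a Jacobian $N^{-1}$ that combines with the prefactor $N^2/(N+1)$ to give a finite limit. The integrand naturally splits according to whether the balls $B(X_1,(r_1/N)^{1/d})$ and $B(X_2,(r_2/N)^{1/d})$ are disjoint or not: in the disjoint regime one has $\bb_{12}^N(r_1,r_2)=\ba_1^N(r_1)+\ba_2^N(r_2)$, and the Poissonization $(1-\bb_{12}^N)^{N-1}\to\exp(-v_df(X_1)r_1-v_df(X_2)r_2)$ combined with the subtraction of independent marginals produces the measure $\mu_1$ and hence $B_\e$; in the overlapping regime the intersection volume enters and one recovers the function $T$ and the measure $\mu_2$, giving $C_\e$.

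The main obstacle is the bookkeeping of this covariance computation: tracking the two regimes consistently, justifying the exchange of limit and integral via dominated convergence, and confirming that the Bickel-Breiman limiting variance really matches our explicit formula term by term. The boundedness of $\log_\e$ is what keeps these technicalities manageable; no new conceptual input is required beyond careful Poisson-approximation computations once the CLT of \cite{bb} has been invoked.
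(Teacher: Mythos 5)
Your instinct to invoke Bickel--Breiman is correct and is exactly what the paper does, but you have misjudged the scope of what \cite{bb} already delivers. Theorem~3.5 of \cite{bb} does not merely assert the existence of a limiting variance; it gives an explicit formula for $\lim_{N\to\infty}N\Var H_N^\e$ in terms of precisely the measures $\mu_0,\mu_1,\mu_2$ and the function $T$ appearing in the statement. The paper's actual proof is therefore essentially one line: set $D_i=(Y_i^N)^{1/d}$ and $h(x,r)=\log_\e(r^d)$, so that the recentred function is $\tilde h(x,r)=\log_\e(r^d/f(x))$, check that $f$ and $\log_\e$ are bounded and continuous (the hypotheses of \cite[Theorems~3.5, 4.1]{bb}), and read off $\sigma_\e^2(f)=A_\e+B_\e+C_\e$. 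The ``substantive task'' you identify --- re-deriving the explicit variance by exchangeability, the Poissonization of $(1-\bb_{12}^N)^{N-1}$, and the rescaling $w=N^{1/d}(X_2-X_1)$ split into disjoint and overlapping regimes --- is a plausible sketch of what lies \emph{inside} Bickel and Breiman's own proof of Theorem~3.5, but it duplicates their work rather than using it. As written, your sketch is also genuinely incomplete: the diagonal term is handled cleanly, but the covariance term (which is where all the difficulty is) is only described at the level of ``one recovers $T$ and $\mu_2$'' without the dominated-convergence bounds needed to justify the interchange of limit and expectation, and without verifying the normalization exactly matches $\mu_1$ and $\mu_2$.

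One small point you could not have known but which matters when matching formulas to \cite{bb}: the authors observe a sign typo in \cite{bb}, namely that $L_0(\dd r)$ in their~(3.6) must be a non-negative measure, so $L_0(r)$ in their~(3.1) should be replaced by $1-L_0(r)$ (or $-L_0(r)$); this is why $\mu_0([0,r])=1-e^{-v_d r^d}$ rather than what the literal formula in \cite{bb} would give. If you do insist on re-deriving the variance from scratch, you would discover this sign issue implicitly, but if you are pattern-matching to \cite{bb}'s stated theorem, as the paper does, you need to note the correction.

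In summary: same top-level strategy, but the paper uses \cite{bb} as a black box providing the explicit variance, whereas your proposal uses it only for the abstract CLT and then re-derives the variance. The latter is not wrong in principle, but it is unnecessary, much longer, and left incomplete in the covariance step.
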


\begin{proof}
Since $f:\rd\mapsto [0,\infty)$ and $\log_\e:[0,\infty)\mapsto \rr$ 
are bounded and continuous, we can apply \cite[Theorems 3.5 and 4.1]{bb} (with the notation therein,
$D_i=(Y^{N}_i)^{1/d}$, we thus take $h(x,r)=\log_\e(r^{d})$, whence $\tilde h(x,r)=\log_\e(r^d/f(x))$). 
This first theorem precisely tells us $\lim_{N\to \infty} N \Var H^\e_N =\sigma_\e^2(f)$ (whence of course
$\sup_{N\geq 1/\e} N \Var H^\e_N <\infty$) and the second one tells us that
$\sqrt N (H^\e_N-\E[H^\e_N])$ goes in law to $\cN(0,\sigma_\e^2(f))$.
Actually, there is a typo in \cite{bb}: $L_0(\dd r)$ in (3.6) has to be a non-negative measure,
so that $L_0(r)$ (see (3.1)) has to be replaced by $-L_0(r)$ or, as we did, by $1-L_0(r)$.
\end{proof}

\begin{rk}\label{rkT}
For all $r_1,r_2>0$ and $u_1,u_2>0$,
\begin{gather*}
T(v_d r_1^d,v_d r_2^d)\leq d 2^{d-1}v_d (r_1\lor r_2)^{d-1} (r_1\land r_2) e^{v_d (r_1\land r_2)^d},\\
T(u_1,u_2)\leq d 2^{d-1} (u_1\lor u_2)^{1-1/d}(u_1\land u_2)^{1/d}e^{u_1\land u_2} .
\end{gather*} 
\end{rk}

\begin{proof}
Assume $r_1\geq r_2$. We have $\int_{B(0,r_1)\cap B(y,r_2)}\dd z \leq \Leb(B(y,r_2))= v_d r_2^d$.
Since furthermore $\Leb(B(0,r_1+r_2)\setminus B(0,r_1))=v_d((r_1+r_2)^d-r_1^d)$,
$T(v_d r_1^d,v_d r_2^d)\leq v_d((r_1+r_2)^d-r_1^d)e^{v_d r_2^d}$. 
But $((r_1+r_2)^d-r_1^d)\leq d(r_1+r_2)^{d-1}r_2\leq d 2^{d-1}r_1^{d-1}r_2$, which proves the first inequality.
The second inequality follows from the first one applied to $r_1=(u_1/v_d)^{1/d}$ and  $r_2=(u_2/v_d)^{1/d}$.
\end{proof}

\begin{lem}\label{lv0}
Assume that $f$ is bounded and continuous and that $\intrd f(x)\log^2f(x)\dd x <\infty$. 
Then $\lim_{\e\to 0}\sigma^2_\e(f)=\sigma^2(f)$,
with $\sigma^2(f)$ defined in Theorem \ref{thv}.
\end{lem}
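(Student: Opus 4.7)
The plan is to pass to the limit $\e\to 0$ in the decomposition $\sigma^2_\e(f)=A_\e+B_\e+C_\e$ of Proposition~\ref{abb} termwise by dominated convergence, and then to evaluate and simplify $A_0+B_0+C_0$ so as to recognize $\sigma^2(f)$.

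For dominated convergence, note $\log_\e y\to\log y$ pointwise on $(0,\infty)$ with $|\log_\e y|\le|\log y|$ (check on $y\le\e$, $\e\le y\le\e^{-1}$, $y\ge\e^{-1}$). The change of variables $u=v_d r^d$ (resp.\ $(u,v)=v_d(r_1^d,r_2^d)$) turns $\mu_0$ into $e^{-u}\,du$ and produces signed measures $\tilde\mu_1,\tilde\mu_2$ with distribution functions $e^{-u-v}(u+v-uv)$ and $e^{-u-v}(T(u,v)-u\lor v)$; their sum has distribution function $e^{-u-v}(u\land v-uv+T(u,v))$, which vanishes on the axes (so $\tilde\mu_1+\tilde\mu_2$ carries no mass there, since $T(0,v)=T(u,0)=0$). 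Using $(a-b)^2\le 2a^2+2b^2$, the Cauchy--Schwarz inequality, $\int_0^\infty\log^2 u\,e^{-u}\,du=\gamma^2+\pi^2/6$, the exponential tail bounds on $\tilde\mu_i$ provided by Remark~\ref{rkT}, and the hypothesis $\intrd f\log^2 f\,\dd x<\infty$, the integrands in $A_\e, B_\e, C_\e$ are uniformly $\e$-dominated by integrable functions. Dominated convergence then yields $A_\e\to A_0$, $B_\e\to B_0$, $C_\e\to C_0$, where these are the same integrals with $\log$ replacing $\log_\e$.

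For $A_0$, the substitution $u=v_d r^d$ with $\int_0^\infty \log u\,e^{-u}\,du=-\gamma$ gives $\int_0^\infty \log(r^d/\bff_1)\mu_0(dr)=-\gamma-\log(v_d\bff_1)$; expanding the square, using $\int_0^\infty \log^2 u\,e^{-u}\,du=\gamma^2+\pi^2/6$, and subtracting the squared mean yields $A_0=\pi^2/6+\intrd f\log^2 f\,\dd x-H(f)^2$. For $B_0+C_0$, write $\log(r_i^d/\bff_j)=L(u_i)-\log\bff_j$ with $L(u)=\log u-\log v_d$ and $u_i=v_d r_i^d$, expand the products, and use the independence of $\bff_1,\bff_2$. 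The $\log\bff$-quadratic contributions $H(f)^2\,\tilde\mu_1(\rr_+^2)$ and $(\intrd f\log^2 f)\,\tilde\mu_2(\rr_+^2)$ vanish since the distribution functions tend to $0$ at infinity (for $\tilde\mu_2$ via Remark~\ref{rkT}); the $\log\bff$-linear contributions are governed by the marginals $\tilde F_i(u_1,\infty)$ and $\tilde F_i(\infty,u_2)$, which also vanish by the same estimates. Only the $L(u_1)L(u_2)$ term survives, and since the marginals of $\tilde\mu_1+\tilde\mu_2$ vanish as well, this reduces further to $\iint\log u_1\log u_2\,(\tilde\mu_1+\tilde\mu_2)(du_1,du_2)$, which is convergent thanks to the absence of axis atoms. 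A direct calculation extracts from the polynomial part $u\land v-uv-u\lor v$ the constant $2\log 2-1$ (using the digamma identity $\int_0^\infty u^s\log u\,e^{-u}\,du=\Gamma'(s+1)$), while a two-variable integration by parts on the $T$-contribution (justified by Remark~\ref{rkT}) produces the double integral $\iint e^{-u-v}T(u,v)\,du\,dv/(uv)$. Thus $B_0+C_0=\chi_d-\pi^2/6$, and adding $A_0$ gives the claimed $\sigma^2(f)$.

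The main obstacle is the final explicit evaluation of $\iint\log u_1\log u_2\,(\tilde\mu_1+\tilde\mu_2)$: one must carefully bookkeep the various contributions of the (signed) density, handle the diagonal singularity coming from $\partial_u\partial_v(u\land v)$, verify the marginal-vanishing cancellations, and carry out the integration by parts on the $T$-part with control of boundary and integrability via Remark~\ref{rkT}.
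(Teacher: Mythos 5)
Your plan founders at the very step you earmark as routine: the claim that $B_\e\to B_0$ and $C_\e\to C_0$ separately by dominated convergence. The paper explicitly warns that ``neither $B_\e$ nor $C_\e$ converge as $\e\to 0$'', and the obstruction is the very axis mass you yourself point out. The distribution function of $\tilde\mu_1$ is $e^{-u-v}(u+v-uv)$, which equals $e^{-u}u\neq 0$ at $v=0$; so $\tilde\mu_1$ charges the axis $\{v=0\}$ (and $\{u=0\}$), and there $\log_\e(v/(v_d\bff_2))=\log\e$. Hence $B_\e$ picks up a contribution of order $\log\e$ against a nonzero one-dimensional signed mass, and the natural dominating function $|\log(u/(v_d\bff_1))|\,|\log(v/(v_d\bff_2))|$ is identically $+\infty$ on a set of positive $|\tilde\mu_1|$-mass, so it is \emph{not} $|\tilde\mu_1|$-integrable; dominated convergence simply does not apply. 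The same phenomenon affects $C_\e$ (through the $u\lor v$ term), and only the sum $B_\e+C_\e$ stays finite, precisely because the axis masses of $\tilde\mu_1$ and $\tilde\mu_2$ cancel --- the observation you record but do not exploit in the convergence step. Equivalently, after the integration by parts the paper performs, the integrand of $B_\e$ against $\frac{\dd u_1\dd u_2}{u_1u_2}$ is $e^{-u_1-u_2}(1/u_1+1/u_2-1)$, which is not integrable near the axes.

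Even granting the passage to $B_\e+C_\e$, there is a second gap. In $B_\e$ the second $\log_\e$ carries $\bff_2$, whereas in $C_\e$ it carries $\bff_1$, so you cannot rewrite $B_\e+C_\e$ as one Stieltjes integral against $\tilde\mu_1+\tilde\mu_2$: a cross term involving $\log_\e(u_2/(v_d\bff_2))-\log_\e(u_2/(v_d\bff_1))$ against $\tilde\mu_1$ remains, whose axis component again injects a $\log\e$. Showing this remainder tends to $0$ needs the exchangeability of $\bff_1,\bff_2$ and the $L^2$ control of $\log\bff_1$; this is exactly the role of the paper's term $K_\e$, which your outline does not address at all. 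The paper's route --- integrate by parts first, so that only the distribution functions appear against the kernel $\frac{\dd u_1\dd u_2}{u_1u_2}$, then decompose $B_\e+C_\e=I_\e+J_\e+K_\e$ with $(u_1\land u_2-u_1u_2)/(u_1u_2)$ and $T(u_1,u_2)/(u_1u_2)$ integrable and $K_\e\to 0$ --- avoids entirely the singular (axis and diagonal) components of $\mu_1,\mu_2$ that your Stieltjes-measure bookkeeping would have to confront head-on. The constants you announce ($2\log 2-1$, the $T$-integral) and the cancellations you anticipate are the right ones, but as written the argument is not a proof.
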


\begin{proof}
For $\xi\sim\Exp(1)$ and independent of $X_1$, it holds that $\Pr((\xi/v_d)^{1/d}>r)=\exp(-v_dr^d)$.
We thus can write $A_\e=\Var(\log_\e(\xi/(v_d\bff_1)))$. By dominated convergence, we will have
that $\lim_{\e\to 0}A_\e=\Var(\log(\xi/(v_d\bff_1)))=:A$, provided $\E[\log^2(\xi/(v_d\bff_1))]<\infty$.
This is the case, because $\intrd f(x) \log^2f(x) \dd x<\infty$ by assumption. Now by independence,
$A=\Var(\log \xi)+\Var(\log \bff_1)$. First, it holds that 
$\Var(\log \bff_1)=\intrd f(x) \log^2f(x) \dd x - (H(f))^2$.
We also have $\Var(\log \xi)=\pi^2/6$, because $\E[\log \xi]=\intoi (\log r) e^{-r}\dd r = -\gamma$
and because $\E[\log^2 \xi]=\intoi (\log^2 r) e^{-r}\dd r = \gamma^2+\pi^2/6$ (both equalities
can be found on the page {\it Euler-Mascheroni constant} of Wikipedia). 
We have proved that $\lim_{\e\to 0}A_\e=\intrd f(x) \log^2f(x) \dd x - (H(f))^2+\pi^2/6$.

\vip

An integration by parts (more precisely, writing
$\log_\e r=\log (1/\e) - \int_r^{\infty} \indiq_{\{u\in(\e,1/\e)\}} \frac{\dd u}u$, using the Fubini
theorem and that $\mu_1([0,r_1)\times[0,r_2))=0$ if $r_1\lor r_2=\infty$) shows that
$$
B_\e =\E\Big[ \intoi \intoi \frac{d \indiq_{\{r_1^d/\bff_1 \in (\e,1/\e)\}} \dd r_1}{r_1}
\frac{d \indiq_{\{r_2^d/\bff_1 \in (\e,1/\e)\}} \dd r_2}{r_2} e^{-v_d(r_1^d+r_2^d)}
[v_d r_1^d+v_dr_2^d-v_d^2r_1^dr_2^d ]\Big].
$$
Using now the change of variables $(u_1,u_2)=(v_dr_1^d,v_dr_2^d)$, we find that
\begin{align*}
B_\e= \E\Big[ \int_{v_d \bff_1 \e}^{v_d \bff_1/\e}\int_{v_d \bff_2 \e}^{v_d \bff_2/\e} e^{-u_1-u_2}(u_1+u_2-u_1u_2) 
\frac{\dd u_2}{u_2}\frac{\dd u_1}{u_1}  \Big].
\end{align*}
Similarly (observe that  $\mu_2([0,r_1)\times[0,r_2))$ is null as soon as $r_1\lor r_2=\infty$ by Remark \ref{rkT}),
\begin{align*}
C_\e=\E\Big[ \int_{v_d \bff_1 \e}^{v_d \bff_1/\e}\int_{v_d \bff_1 \e}^{v_d \bff_1/\e} e^{-u_1-u_2}(T(u_1,u_2)
-u_1\lor u_2) 
\frac{\dd u_2}{u_2}\frac{\dd u_1}{u_1}  \Big].
\end{align*}
Unfortunately, neither $B_\e$ nor $C_\e$ converge as $\e\to 0$.
We write $B_\e+C_\e=I_\e+J_\e+K_\e$, where
\begin{gather*}
I_\e=\E\Big[ \int_{v_d \bff_1 \e}^{v_d \bff_1/\e}\int_{v_d \bff_2 \e}^{v_d \bff_2/\e} e^{-u_1-u_2}(u_1+u_2-u_1\lor u_2-u_1u_2)
\frac{\dd u_2}{u_2}\frac{\dd u_1}{u_1}  \Big],\\
J_\e=\E\Big[ \int_{v_d \bff_1 \e}^{v_d \bff_1/\e}\int_{v_d \bff_1 \e}^{v_d \bff_1/\e} e^{-u_1-u_2}T(u_1,u_2)
\frac{\dd u_2}{u_2}\frac{\dd u_1}{u_1}  \Big],\\
K_\e=\E\Big[\int_{v_d \bff_1 \e}^{v_d \bff_1/\e}\Big(\int_{v_d \bff_2 \e}^{v_d \bff_2/\e} e^{-u_2}(u_1\lor u_2) 
\frac{\dd u_2}{u_2}-
\int_{v_d \bff_1 \e}^{v_d \bff_1/\e} e^{-u_2}(u_1\lor u_2) 
\frac{\dd u_2}{u_2}\Big)e^{-u_1}\frac{\dd u_1}{u_1}  \Big].
\end{gather*}

Since $|u_1+u_2-u_1\lor u_2-u_1u_2|=|u_1\land u_2 - u_1u_2|\leq u_1u_2+\sqrt{u_1u_2} 
\in L^1(\rr_+^2,e^{-u_1-u_2}\frac{\dd u_1}{u_1}\frac{\dd u_2}{u_2})$,
we see that $\lim_{\e\to 0} I_\e=\intoi\intoi e^{-u_1-u_2}(u_1+u_2-u_1\lor u_2-u_1u_2)\frac{\dd u_2}{u_2}
\frac{\dd u_1}{u_1}=:I$ by dominated convergence.
But $I=2\intoi\int_0^{u_1} e^{-u_1-u_2}u_2(1-u_1)\frac{\dd u_2}{u_2}\frac{\dd u_1}{u_1}=
2 \intoi e^{-u_1}(1-e^{-u_1})(1-u_1)\frac{\dd u_1}{u_1}=2\log 2 -1$.
Indeed, introduce $\varphi(t)=2 \intoi e^{-u_1}(1-e^{-t u_1})(1-u_1)\frac{\dd u_1}{u_1}$:
we have $\varphi(0)=0$ and $\varphi'(t)=2 \intoi e^{-u_1}e^{-t u_1}(1-u_1)\dd u_1
=2 t/(1+t)^2$, so that $\varphi(1)=\int_0^1 [2 t/(1+t)^2]\dd t=2\log 2 -1$.

\vip

Next, we deduce from Remark \ref{rkT} that 
$T\in  L^1(\rr_+^2,e^{-u_1-u_2}\frac{\dd u_1}{u_1}\frac{\dd u_2}{u_2})$,
whence $\lim_{\e\to 0}J_\e= \intoi\intoi e^{-u_1-u_2}T(u_1,u_2)\frac{\dd u_2}{u_2}\frac{\dd u_1}{u_1}$.

\vip

Finally, we verify that $\lim_{\e\to 0} K_\e=0$. Using a symmetry argument (and that $\bff_1,\bff_2$ are i.i.d.),
$$
K_\e=2\E\Big[\int_{v_d \bff_1 \e}^{v_d \bff_1/\e} \Big(\int_{v_d \bff_2 \e}^{v_d \bff_2/\e} e^{-u_2} \indiq_{\{u_2<u_1\}}
\frac{\dd u_2}{u_2}- \int_{v_d \bff_1 \e}^{v_d \bff_1/\e} e^{-u_2} \indiq_{\{u_2<u_1\}}
\frac{\dd u_2}{u_2}\Big) e^{-u_1} \dd u_1  \Big].
$$
Since next $\bff_1$ and $\bff_2$ have the same law, it holds that
$$
L_\e=2\E\Big[\intoi \Big(\int_{v_d \bff_2 \e}^{v_d \bff_2/\e} e^{-u_2} \indiq_{\{u_2<u_1\}}
\frac{\dd u_2}{u_2}- \int_{v_d \bff_1 \e}^{v_d \bff_1/\e} e^{-u_2} \indiq_{\{u_2<u_1\}}
\frac{\dd u_2}{u_2}\Big) e^{-u_1} \dd u_1  \Big]=0.
$$
Hence $|K_\e|=|K_\e-L_\e|\leq 2 \E[\int_0^{v_d \bff_1 \e}|F_\e(u_1,\bff_1,\bff_2) | e^{-u_1} \dd u_1 
+\int_{v_d \bff_1 /\e}^\infty |F_\e(u_1,\bff_1,\bff_2)|  e^{-u_1} \dd u_1]$, where
$F_\e(u_1,\bff_1,\bff_2)=\int_{v_d \bff_2 \e}^{v_d \bff_2/\e} e^{-u_2} \indiq_{\{u_2<u_1\}}
\frac{\dd u_2}{u_2}- \int_{v_d \bff_1 \e}^{v_d \bff_1/\e} e^{-u_2} \indiq_{\{u_2<u_1\}}
\frac{\dd u_2}{u_2}$. But 
$$
|F_\e(u_1,\bff_1,\bff_2)|\leq {2+|\log(v_d \bff_1 \e)|+|\log(v_d \bff_2\e)|}
\leq C(1+|\log \bff_1|+|\log \bff_2|+|\log \e|)
$$
and we end with
$$
K_\e\leq C \E\Big[ (1-e^{-v_d\bff_1 \e}+e^{-v_d\bff_1/\e})(1+|\log \bff_1|+|\log \bff_2|+|\log \e|) \Big].
$$
Using that $f$ is bounded, we see that $1-e^{-v_d\bff_1 \e}\leq C \e$ so that,
since $\E[|\log \bff_1|]<\infty$ by assumption, we clearly have 
$\lim_{\e\to 0}\E[ (1-e^{-v_d\bff_1 \e})(1+|\log \bff_1|+|\log \bff_2|+|\log \e|)]=0$.
Next,
$\lim_{\e\to 0}\E[ e^{-v_d\bff_1 /\e}(1+|\log \bff_1|+|\log \bff_2|)]=0$ by dominated convergence.
At last,
$$
\E[ e^{-v_d\bff_1 /\e}]\leq e^{-v_d/\sqrt\e}+\Pr\Big(\bff_1 < \sqrt \e\Big)
\leq e^{-v_d/\sqrt\e}+\Pr\Big(|\log \bff_1|> |\log \sqrt\e|\Big)
\leq e^{-v_d/\sqrt\e}+\frac{4\E[\log^2\bff_1]}{\log^2 \e}.
$$
We conclude that $\lim_{\e\to 0} |\log \e|\E[ e^{-v_d\bff_1 /\e}]=0$, so that $\lim_{\e\to 0}K_\e=0$.

\vip

Recalling that $\sigma^2_\e(f)=A_\e+B_\e+C_\e=A_\e+I_\e+J_\e+K_\e$, we have checked that 
$\lim_{\e\to 0} \sigma_\e^2(f)=\intrd f(x) \log^2f(x) \dd x - (H(f))^2+\pi^2/6
+2\log 2 -1 + \intoi \intoi e^{-u_1-u_2}T(u_1,u_2)
\frac{\dd u_2}{u_2}\frac{\dd u_1}{u_1}$, which is nothing but $\sigma^2(f)$.
\end{proof}

It remains to show that $K^{1,\e}_N$, $K^{2,\e}_N$ and $K^{3}_N$ are sufficiently small.
We first give some expressions of their variances.

\begin{lem}\label{expr2}
Recall that $K^{1,\e}_N$, $K^{2,\e}_N$ and $K^{3}_N$ were defined, for $\e\in(0,1]$ and
$N\geq 1/\e$, in Lemma \ref{dec}. For $r,s \geq 0$, put
$$
\Gamma_N(r,s) =\indiq_{\{|X_1-X_2|>(\frac{r\lor s}N )^{1/d}\}}
(1- \bb^N_{12}(r,s))^{N-1}-(1-\ba^N_1(r))^N(1-\ba^N_2(s))^N.
$$
We have $\Var K_N^{1,\e}\leq W^{11,\e}_N+W^{12,\e}_N$, $\Var K_N^{2,\e}\leq W^{21,\e}_N+W^{22,\e}_N$
and $\Var K_N^{3}\leq W^{31}_N+W^{32}_N$, where 
\begin{gather*}
W^{11,\e}_N=\frac 2{N+1} \int_0^1 \E[1-(1-\ba^N_1(\e r))^N] \log \frac 1r \frac{\dd r}r,\quad
W^{12,\e}_N=\frac{2N}{N+1}\int_{0}^1\int_0^r \E[\Gamma_N(\e r,\e s)] \frac{\dd s}s \frac{\dd r}r,\\
W^{21,\e}_N= \frac 2{N+1} \int_1^{\e N} \E[(1-\ba^N_1(r/\e ))^N] \log r \frac{\dd r}r,\quad
W^{22,\e}_N=\frac{2N}{N+1}\int_{1}^{\e N}\int_{1}^r \E[\Gamma_N(r/\e,s/\e)] \frac{\dd s}s \frac{\dd r}r,\\
W^{31}_N= \frac{2}{N+1}\int_1^\infty \E[(1-\ba^N_1(Nr))^N] \log r \frac{\dd r}r,\quad
W^{32}_N=\frac{2N}{N+1} \int_1^\infty\int_1^r \E[\Gamma_N(Nr,Ns)] \frac{\dd s}s \frac{\dd r}r.
\end{gather*}
\end{lem}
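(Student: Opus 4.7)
My plan is to treat the three variance bounds in parallel, since they share a common structure and differ only in the interval of integration and in whether we work with $\indiq_{\{Y^N_i<u\}}$ or $\indiq_{\{Y^N_i>u\}}$. The starting point is the layer-cake identities
\begin{gather*}
\log[(y/\e)\land 1] = -\int_0^{\e} \indiq_{\{y < u\}} \frac{\dd u}{u}, \qquad
\log[1\lor (\e y)\land (\e N)] = \int_{1/\e}^{N} \indiq_{\{y > u\}} \frac{\dd u}{u},\\
\log[(y/N)\lor 1] = \int_{N}^{\infty} \indiq_{\{y > u\}} \frac{\dd u}{u},
\end{gather*}
all valid for $y>0$ and obtained by inspection of the three monotone piecewise definitions. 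Inserting these into $K^{1,\e}_N,K^{2,\e}_N,K^3_N$ and applying Fubini (licit by Lemma \ref{tbust}(i)-(ii) for integrability near $u=0$ in the first case and Lemma \ref{tbust}(iv) for the tail at $u=\infty$ in the third), each estimator becomes, up to sign, $(N+1)^{-1}\int \bigl(\sum_{i=1}^{N+1}\indiq_{\{Y^N_i \in E_u\}}\bigr)\dd u/u$ on the relevant range.

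Bilinearity of covariance and a further Fubini then give, for example,
$$
\Var K^{1,\e}_N = \frac{1}{(N+1)^2}\int_0^\e\int_0^\e \sum_{i,j=1}^{N+1}\Cov\bigl(\indiq_{\{Y^N_i<r\}},\indiq_{\{Y^N_j<s\}}\bigr)\frac{\dd r\,\dd s}{rs},
$$
and by exchangeability the inner sum is $(N+1)\Cov(\indiq_{\{Y^N_1<r\}},\indiq_{\{Y^N_1<s\}}) + N(N+1)\Cov(\indiq_{\{Y^N_1<r\}},\indiq_{\{Y^N_2<s\}})$. The diagonal covariance I would bound crudely by the joint probability $\Pr(Y^N_1 < r\land s) = \E[1-(1-\ba^N_1(r\land s))^N]$ (via Lemma \ref{pdd}); this is where the inequality, rather than an equality, enters. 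For the off-diagonal term, complementation leaves covariance unchanged, so $\Cov(\indiq_{\{Y_1<r\}},\indiq_{\{Y_2<s\}}) = \Cov(\indiq_{\{Y_1>r\}},\indiq_{\{Y_2>s\}})$; combining Lemma \ref{pdd} for the joint probability with the independence of $X_1,X_2$ (so that $\Pr(Y^N_1>r)\Pr(Y^N_2>s)=\E[(1-\ba^N_1(r))^N(1-\ba^N_2(s))^N]$), one recognizes this covariance as exactly $\E[\Gamma_N(r,s)]$.

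Finally, both integrands are symmetric in $(r,s)$ --- the diagonal because its bound depends only on $r\land s$ (or $r\lor s$ for the other two cases), the off-diagonal by exchangeability of $X_1,X_2$ --- so each double integral over $[0,\e]^2$ reduces to twice the integral over $\{0<s<r<\e\}$. For the diagonal part, swapping the order of integration turns $\int_0^\e(\dd r/r)\int_0^r(\cdots)(\dd s/s)$ into $\int_0^\e(\cdots)\log(\e/s)\,\dd s/s$, and the substitution $s=\e u$ produces the $\log(1/u)$ factor appearing in $W^{11,\e}_N$. The analogous substitutions $r\mapsto r/\e$ (for $K^{2,\e}_N$, on $[1/\e,N]^2$) and $r\mapsto Nr$ (for $K^3_N$, on $[N,\infty)^2$) give $W^{21,\e}_N$ and $W^{31}_N$; the off-diagonal pieces map directly to $W^{12,\e}_N,W^{22,\e}_N,W^{32}_N$ after the same changes of variables.

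There is no genuine conceptual difficulty here; the only work is bookkeeping --- verifying Fubini at each step via the crude estimates of Lemma \ref{tbust}, correctly reducing to the region $s<r$, and tracking the successive changes of variables. No ingredient beyond Lemma \ref{pdd}, exchangeability and independence of the sample, and elementary measure theory is needed.
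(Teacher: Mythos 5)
Your proposal is correct and takes essentially the same route as the paper: exchangeability to split the variance into a diagonal term (bounded by the raw second moment, which is what your $\Cov(\indiq,\indiq)\leq\E[\indiq\indiq]$ amounts to) and an off-diagonal covariance recognized as $\E[\Gamma_N(r,s)]$ via Lemma \ref{pdd}, followed by symmetry, an order-of-integration swap, and rescaling. The only cosmetic difference is that you apply the one-sided layer-cake representation at the level of indicators before expanding the variance, whereas the paper first writes $\Var\leq\E[\log^2]$ and $\Cov(\log,\log)$ and then invokes the two-sided formulas \eqref{ss1}--\eqref{ss2}; the resulting computations are identical.
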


\begin{proof}
By exhangeability, we have $\Var K_N^{1,\e}\leq W^{11,\e}_N+W^{12,\e}_N$, $\Var K_N^{2,\e}\leq W^{21,\e}_N+W^{22,\e}_N$
and $\Var K_N^{3}\leq W^{31}_N+W^{32}_N$, where 
\begin{gather*}
W^{11,\e}_N=(N+1)^{-1} \E[\log^2 [(Y^N_1/\e)\land 1]], \\
W^{21,\e}_N=(N+1)^{-1} \E[\log^2 [1\lor (\e Y^N_1)\land (\e N)]]\\
W^{31}_N=(N+1)^{-1} \E[\log^2 [(Y^N_1/N)\lor 1]],\\
W^{12,\e}_N=
(N+1)^{-1}N \Cov (\log [(Y^N_1/\e)\land 1],\log [(Y^N_2/\e)\land 1]),\\
W^{22,\e}_N=
(N+1)^{-1}N \Cov (\log [1\lor (\e Y^N_1)\land (\e N)],\log [1\lor (\e Y^N_2)\land (\e N)]),\\
W^{32}_N=
(N+1)^{-1}N \Cov (\log [(Y^N_1/N)\lor 1],\log [(Y^N_2/N)\lor 1]).
\end{gather*}

Since
$\log^2 z =
2\int_1^\infty \indiq_{\{r<z\}} \log r\frac{\dd r}r +2\int_0^1 \indiq_{\{r\geq z\}} \log \frac 1r\frac{\dd r}r$ 
for all $z>0$, it holds that for 
$Z$ a positive random variable,
\begin{align}\label{ss1}
\E[\log^2 Z]=2\int_1^\infty \Pr(Z> r)\log r \frac{\dd r}r
+2\int_0^1\Pr(Z\leq r)\log \frac1r \frac{\dd r}r.
\end{align}
Using Lemma \ref{pdd}, we easily conclude that
\begin{gather*}
W^{11,\e}_N = \frac 2{N+1} \int_0^1 \Pr(Y^N_1\leq \e r)\log \frac 1 r \frac{\dd r}r
=\frac 2{N+1} \int_0^1 \E[1-(1-\ba^N_1(\e r))^N]\log \frac 1 r \frac{\dd r}r,\\
W^{21,\e}_N = \frac 2{N+1} \int_1^\infty \Pr((\e Y^N_1)\land (\e N) > r)\log r \frac{\dd r}r
=\frac 2{N+1} \int_1^{\e N} \E[(1-\ba^N_1(r/\e))^N]\log r \frac{\dd r}r,\\
W^{31}_N= \frac 2{N+1} \int_1^\infty \Pr(Y^N_1/N > r)\log r \frac{\dd r}r
=\frac 2{N+1} \int_1^{\infty} \E[(1-\ba^N_1(Nr))^N]\log r \frac{\dd r}r.
\end{gather*}

Also, for $Z_1$ and $Z_2$ two positive random variables,
\begin{align}\label{ss2}
\Cov (\log Z_1, \log Z_2 )=\intoi\intoi \Big(\Pr(Z_1>r,Z_2>s)-\Pr(Z_1>r)\Pr(Z_2>s) \Big) \frac{\dd s}s
\frac{\dd r}r.
\end{align}
Indeed, it suffices to use that $\log z =\intoi (\indiq_{\{r<z\}}-\indiq_{\{r<1\}})\frac{\dd r}r$ and
the bilinearity of the covariance. For $r,s>0$, we deduce from Lemma \ref{pdd} 
(and the independance of $\ba^N_1(r)$ and  $\ba^N_2(s)$) that
\begin{align}\label{ttiicc}
&\Pr(Y_1^N>r,Y_2^N>s)-\Pr(Y_1^N>r)\Pr(Y_2^N>s)\\
=& \E\Big[\indiq_{\{|X_1-X_2|>(\frac{r\lor s}N )^{1/d}\}}
(1- \bb^N_{12}(r,s))^{N-1}\Big]-\E[(1-\ba^N_1(r))^N]\E[(1-\ba^N_2(s))^N] \nonumber\\
=& \E[\Gamma_N(r,s)].\nonumber
\end{align}
Consequently, $\Pr((Y^N_1/\e)\land 1>r,(Y^N_2/\e)\land 1>s)-\Pr((Y^N_1/\e)\land 1>r)\Pr((Y^N_2/\e)\land 1>s)
=\E[\Gamma_N(\e r,\e s)]\indiq_{\{r,s\in [0,1]\}}$, whence by \eqref{ss2},
$$
W^{12,\e}_N =\frac N{N+1} \int_0^1\int_0^1\E[\Gamma_N(\e r,\e s)]\frac{\dd s}s\frac{\dd r}r
=\frac {2N}{N+1} \int_0^1\int_0^r\E[\Gamma_N(\e r,\e s)]\frac{\dd s}s\frac{\dd r}r.
$$
Similarly, we deduce from \eqref{ttiicc} the equality
$\Pr(1\lor (\e Y^N_1)\land (\e N)>r,1\lor (\e Y^N_2)\land (\e N)>s)-{\Pr(1\lor (\e Y^N_1)\land (\e N)>r)
\Pr(1\lor (\e Y^N_2)\land (\e N)>s)}
=\E[\Gamma_N(r/\e,s/\e)]\indiq_{\{r,s\in [1,\e N]\}}$, so that
$$
W^{22,\e}_N =\frac N{N+1} \int_1^{\e N}\int_1^{\e N}\E[\Gamma_N(r/\e,s/\e)]\frac{\dd s}s\frac{\dd r}r
=\frac {2N}{N+1} \int_1^{\e N}\int_1^{r}\E[\Gamma_N(r/\e,s/\e)]\frac{\dd s}s\frac{\dd r}r.
$$
Finally, $\Pr((Y^N_1/N)\lor 1>r,(Y^N_2/N)\lor 1>s)-{\Pr((Y^N_1/N)\lor 1>r)\Pr((Y^N_2/N)\lor 1>s)}
=\E[\Gamma_N(Nr,Ns)]\indiq_{\{r,s\geq 1\}}$, whence
$$
W^{32}_N =\frac N{N+1} \int_1^{\infty}\int_1^{\infty}\E[\Gamma_N(N r,N s)]\frac{\dd s}s\frac{\dd r}r
=\frac {2N} {N+1} \int_1^{\infty}\int_1^r\E[\Gamma_N(N r,N s)]\frac{\dd s}s\frac{\dd r}r.
$$
The proof is complete.
\end{proof}

We now study the terms $W_N^{11,\e}$, $W_N^{12,\e}$, $W_N^{21,\e}$, $W_N^{22,\e}$, $W_N^{31}$ and $W_N^{32}$.

\begin{lem}\label{lv1}
Assume that $f$ is bounded. Then $N W^{11,\e}_N\leq C \e $ for all $\e\in(0,1]$ and all $N\geq 1/\e$.
\end{lem}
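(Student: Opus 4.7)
The goal is to bound the integrand $\E[1-(1-\ba^N_1(\e r))^N]$ uniformly for $r\in(0,1)$, $\e\in(0,1]$, $N\geq 1/\e$, in such a way that the integral $\int_0^1 \log(1/r)\,\dd r/r$ remains under control after the cancellation with the $1/(N+1)$ factor.

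First I would use the elementary inequality $1-(1-a)^N\leq Na$ for $a\in[0,1]$ applied to $a=\ba^N_1(\e r)$. Since $\e r\leq 1 \leq N$ (using $\e\leq 1$, $r\leq 1$ and $N\geq 1/\e\geq 1$), Lemma \ref{tbust}(i) gives
$$\ba^N_1(\e r)\leq v_d \bM_1 \e r/N,$$
hence
$$1-(1-\ba^N_1(\e r))^N \leq N\,\ba^N_1(\e r) \leq v_d\,\bM_1\,\e\,r.$$
Taking expectations and using that $f$ is bounded (so $M$ is bounded on $\rd$ and $\E[\bM_1]<\infty$ trivially), we get
$$\E[1-(1-\ba^N_1(\e r))^N] \leq v_d\,\E[\bM_1]\,\e\,r.$$

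Plugging this into the definition of $W^{11,\e}_N$ and multiplying by $N$ gives
$$N\,W^{11,\e}_N \leq \frac{2N}{N+1}\,v_d\,\E[\bM_1]\,\e\int_0^1 r\,\log\frac1r\,\frac{\dd r}{r}
= \frac{2N}{N+1}\,v_d\,\E[\bM_1]\,\e\int_0^1\log\frac1r\,\dd r,$$
and the last integral equals $1$. This yields $N\,W^{11,\e}_N\leq 2v_d\,\E[\bM_1]\,\e =: C\e$.

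There is no real obstacle here: the only subtlety is to make sure that the range of $r$ is such that Lemma \ref{tbust}(i) applies (which is why we use $\e r\leq 1\leq N$), and that the weight $\log(1/r)/r$ is compensated by the factor $r$ coming from the linear bound on $\ba^N_1(\e r)$, leaving the integrable weight $\log(1/r)$.
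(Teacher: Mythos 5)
Your proof is correct and follows essentially the same route as the paper: the elementary bound $1-(1-x)^N\le Nx$, then Lemma \ref{tbust}-(i) to get $\ba^N_1(\e r)\le v_d\bM_1\e r/N$, then integration of $\log(1/r)$ over $(0,1)$. The paper bounds $\bM_1$ pointwise by a constant (since $f$ bounded implies $M$ bounded) rather than invoking $\E[\bM_1]<\infty$, but this is an immaterial variation.
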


\begin{proof}
Recall that $W^{11,\e}_N= 2(N+1)^{-1}\E[\int_0^1 (1-(1-\ba^N_1(\e r))^N) \log \frac 1r \frac{\dd r}r]$.
But $1-(1-x)^N\leq Nx$ for $x\in[0,1]$, so that
$1-(1-\ba^N_1(\e r))^N \leq N \ba^N_1(\e r) \leq v_d \bM_1 \e r\leq C \e r$ by  Lemma \ref{tbust}-(i)
and since $f$ is bounded. Thus
\begin{align*}
W^{11,\e}_N \leq \frac {C\e} {N+1}\int_0^1 \log \frac 1r \dd r \leq \frac {C\e}N
\end{align*}
as desired.
\end{proof}

\begin{lem}\label{lv2}
Assume that $\intrd [\log^2 m(x)] f(x)\dd x<\infty$. Then we have $N W^{21,1}_N\leq C$ for all $N\geq 1$
and $\lim_{\e\to 0}\sup_{N\geq 1/\e} NW^{21,\e}_N=0$.
\end{lem}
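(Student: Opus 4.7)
The plan is to reduce the estimate to a one-dimensional integral in which the only remaining randomness is $\bm_1$, and then use a simple splitting-plus-dominated-convergence argument.

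First I would apply Lemma \ref{tbust}(ii): since $r\in[1,\e N]$ gives $r/\e\in[1/\e,N]\subset[0,N]$, we get $(1-\ba^N_1(r/\e))^N\leq e^{-\bm_1 r/\e}$. Then the change of variables $u=r/\e$ (so $\mathrm{d}r/r=\mathrm{d}u/u$) combined with the trivial bound $\log(\e u)\leq \log u$ (valid because $\e\leq 1$ and $u\geq 1/\e\geq 1$) yields
\begin{equation*}
N W^{21,\e}_N\;\leq\;2\,\E\Big[\,\int_{1/\e}^\infty e^{-\bm_1 u}\log u\,\frac{\dd u}{u}\,\Big]
\;=:\;2\,\E\bigl[h(\bm_1,1/\e)\bigr],
\end{equation*}
where $h(b,a)=\int_a^\infty e^{-bu}\log u\,\mathrm{d}u/u$ for $b>0,\,a\geq 1$.

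Next I would establish the key pointwise bound $h(b,1)\leq C(1+\log^2 b)$ for all $b>0$, which plainly dominates $h(b,a)$ for every $a\geq 1$ since the integrand is nonnegative. For $b\geq 1$ this is immediate since $e^{-bu}\leq e^{-u}$ and $\int_1^\infty e^{-u}\log u\,\mathrm{d}u/u<\infty$. For $b\in(0,1)$ one splits at $u=1/b$: on $[1,1/b]$ one bounds $e^{-bu}\leq 1$, giving $\int_1^{1/b}\log u\,\mathrm{d}u/u=\tfrac{1}{2}\log^2 b$; on $[1/b,\infty)$, the substitution $v=bu$ gives $\int_1^\infty e^{-v}[\log v+|\log b|]\,\mathrm{d}v/v\leq C(1+|\log b|)$.

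This already settles the first claim: $N W^{21,1}_N\leq 2\E[h(\bm_1,1)]\leq C(1+\E[\log^2\bm_1])<\infty$ by the hypothesis $\intrd \log^2 m(x)\,f(x)\,\dd x<\infty$ (which also guarantees $\bm_1>0$ a.s.). For the second claim, for each fixed $b>0$ we have $h(b,1/\e)\to 0$ as $\e\to 0$ by monotone convergence, while $h(\bm_1,1/\e)\leq h(\bm_1,1)\leq C(1+\log^2\bm_1)\in L^1$, so dominated convergence gives $\E[h(\bm_1,1/\e)]\to 0$. Since the bound on $NW^{21,\e}_N$ is uniform in $N\geq 1/\e$, this yields $\lim_{\e\to 0}\sup_{N\geq 1/\e}NW^{21,\e}_N=0$.

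The only delicate step is the bound $h(b,1)\leq C(1+\log^2 b)$ when $b$ is very small, which is exactly why the moment condition on $\log^2 m$ is assumed; everything else is routine.
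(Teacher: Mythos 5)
Your proof is correct and takes essentially the same route as the paper: bound the survival probability via Lemma \ref{tbust}(ii), reduce to the one-parameter integral $\int_1^\infty e^{-bu}\log u\,\dd u/u\leq C(1+\log^2 b)$, and finish by dominated convergence against $C(1+\log^2\bm_1)\in L^1$. The only cosmetic difference is that you prove the key integral bound by a case split on $b\lessgtr 1$, whereas the paper uses an integration by parts followed by the substitution $t=ar$, and you perform the change of variable $u=r/\e$ up front rather than carrying $e^{-\bm_1 r/\e}$ through; both yield the same estimate.
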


\begin{proof}
Recall that $W^{21,\e}_N= 2(N+1)^{-1}\E[\int_1^{\e N} (1-\ba^N_1(r/\e))^N \log r \frac{\dd r}r]$.
Thanks to Lemma \ref{tbust}-(ii) (and since $r/\e \leq N$), 
\begin{align*}
NW^{21,\e}_N\leq 2 \E\Big[\int_1^\infty e^{- \bm_1 r/\e}  \log r \frac{\dd r}r\Big].
\end{align*}
If $\e=1$, we use that $2\int_1^\infty e^{-ar}\log r \frac{\dd r}r 
=\int_1^\infty ae^{-ar}\log^2 r \dd r=\int_a^\infty e^{-t} \log^2(t/a)\dd t
\leq C(1+\log^2 a)$ and that $\E[\log^2\bm_1]<\infty$ by assumption to conclude that
$\sup_{N\geq 1} NW_N^{21,1}<\infty$. The fact that $\lim_{\e\to 0}\sup_{N\geq 1/\e} NW^{21,\e}_N=0$ follows
from the dominated convergence theorem.
\end{proof}

\begin{lem}\label{lv3}
If $\intrd (|x|^q+[\log^2 (2+|x|)][f(B(x,1))]^{-\theta})f(x)\dd x<\infty$ 
for some $q>0$ and some $\theta>0$, then  $NW^{31}_N\leq C N^{-\theta}$ for all $N\geq 1$.
\end{lem}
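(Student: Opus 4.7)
The plan is to mimic the strategy of Lemma \ref{lv2} but adapted to the fact that here we integrate on $r \geq 1$ (so the range $Nr \in [0,N]$ needed by Lemma \ref{tbust}-(ii) is violated), and to use instead the tail estimate Lemma \ref{tbust}-(iv). Starting from
$$NW^{31}_N = \frac{2N}{N+1}\E\Big[\int_1^\infty (1-\ba^N_1(Nr))^N \log r \, \frac{\dd r}{r}\Big],$$
I would exploit the fact that for $r\geq 1$, the ball $B(X_1,r^{1/d})$ contains $B(X_1,1)$, so $\ba^N_1(Nr)\geq \bR_1:=f(B(X_1,1))$ and hence $1-\ba^N_1(Nr)\leq 1-\bR_1$.

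The key trick is to peel off one factor and keep the other $N-1$ as a uniform bound:
$$(1-\ba^N_1(Nr))^N \leq (1-\bR_1)^{N-1}(1-\ba^N_1(Nr)) \leq e^{-N\bR_1/2}\,(1-\ba^N_1(Nr))$$
for $N\geq 2$, using $1-\bR_1\leq e^{-\bR_1}$ and $(N-1)\geq N/2$. Then Lemma \ref{tbust}-(iv) gives $1-\ba^N_1(Nr)\leq \bg_1/r^{q/d}\land 1$ with $\bg_1\leq C(1+|X_1|^q)$.

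Next I would compute the $r$-integral $I(X_1):=\int_1^\infty [\bg_1/r^{q/d}\land 1]\log r \frac{\dd r}{r}$ by splitting at $R=\bg_1^{d/q}$: on $[1,R]$ the truncation contributes $\tfrac12 \log^2 R$, and on $[R,\infty)$ a change of variable $r=Ru$ gives a contribution of order $1+\log R$. Hence $I(X_1)\leq C(1+\log^2(2+|X_1|))$ (after absorbing the polynomial $\log \bg_1$ into $\log(2+|X_1|)$). To extract the $N^{-\theta}$ decay, I would use the elementary bound $e^{-N\bR_1/2}\leq C_\theta (N\bR_1)^{-\theta}$.

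Putting everything together,
$$NW^{31}_N \leq C N^{-\theta}\,\E\Big[\bR_1^{-\theta}\big(1+\log^2(2+|X_1|)\big)\Big] = C N^{-\theta}\intrd \frac{1+\log^2(2+|x|)}{[f(B(x,1))]^\theta} f(x)\dd x,$$
which is finite by the hypothesis (the ``$1$'' is dominated since $\log^2(2+|x|)\geq\log^22$). The case $N=1$ is trivial and can be absorbed in the constant. The main ``obstacle'' is just recognizing the right factorization: the naive bound $(1-\ba^N_1(Nr))^N\leq e^{-N\bR_1}$ loses all $r$-dependence and forces an integral that diverges logarithmically, while the naive bound $(\bg_1/r^{q/d})^N$ fails when $\bg_1/r^{q/d}>1$; keeping one tail factor and $N-1$ uniform factors is precisely what balances these issues.
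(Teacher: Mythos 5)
Your proof is correct. The ingredients are the same as in the paper's argument (the observation $\ba^N_1(Nr)\geq f(B(X_1,1))=:\bR_1$ for $r\geq 1$, the Markov-type bound from Lemma \ref{tbust}-(iv), and the inequality $e^{-x}\leq C_\theta x^{-\theta}$), but the organization is genuinely different. The paper splits the $r$-integral at the random point $(2\bg_1)^{d/q}$: on $[1,(2\bg_1)^{d/q}]$ it applies the $\theta$-power bound to the full exponent $N$, giving $(1-\ba^N_1(Nr))^N\leq C(N\bR_1)^{-\theta}$ and then integrates $\log r/r$ over a bounded range; on $[(2\bg_1)^{d/q},\infty)$ it keeps the full power $(\bg_1/r^{q/d})^N$ and exploits that the base is $\leq 1/2$ there, yielding a geometric $2^{-N}$. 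Your version avoids the regime split altogether by peeling one factor: $(1-\ba^N_1(Nr))^N\leq(1-\bR_1)^{N-1}(1-\ba^N_1(Nr))\leq e^{-N\bR_1/2}\bigl(\bg_1 r^{-q/d}\wedge 1\bigr)$, so that the $r$-integral can be done in one go (the $\wedge 1$ is exactly what makes it converge near $r=1$ and the $r^{-q/d}$ what makes it converge at infinity), and the $N^{-\theta}$ decay is extracted at the end. This is a slightly cleaner single-estimate argument; the paper's version gives a formally better $2^{-N}$ contribution on the far tail, but that is dominated by $N^{-\theta}$ anyway, so nothing is lost.
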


\begin{proof}
Recall that $W^{31}_N=2(N+1)^{-1}\E[\int_1^\infty (1-\ba^N_1(Nr))^N \log r \frac{\dd r}r]$.
For $g(x)=1\lor \E[|X_1-x|^q]$ as in Lemma \ref{tbust}, we write $NW^{31}_N\leq 2A_N+2B_N$, where
$$
A_N=\E\Big[\int_1^{(2\bg_1)^{d/q}}  (1-\ba^N_1(Nr))^N \log r \frac{\dd r}r\Big]
\quad\hbox{and}\quad
B_N=\E\Big[\int_{(2\bg_1)^{d/q}}^\infty  (1-\ba^N_1(Nr))^N \log r \frac{\dd r}r\Big].
$$

Using that $(1-\ba^N_1(rN))^N\leq (\bg_1/r^{q/d})^N$ by Lemma \ref{tbust}-(iv),
$$
B_N\leq \E\Big[\int_{(2\bg_1)^{d/q}}^\infty  \Big(\frac{\bg_1}{r^{q/d}}\Big)^N \log r \frac{\dd r}r \Big]
\leq \Big(\frac 12\Big)^{N-1} \E\Big[ \bg_1 \int_1^\infty \frac{\log r}{r^{1+q/d}}\dd r\Big]\leq \frac C{2^N}.
$$
because $\E[\bg_1]<\infty$ (recall that $g(x)\leq C(1+|x|^q)$).

\vip

Next, there is a constant $C$ such that $(1-x)^N\leq e^{-Nx}\leq C (Nx)^{-\theta}$ for all $x\in (0,1]$,
whence $(1-\ba^N_1(rN))^N\leq C (N\ba^N_1(rN))^{-\theta}\leq C(N\ba^N_1(N))^{-\theta}=C(Nf(B(X_1,1)))^{-\theta}$
for all $r\geq 1$. Thus
\begin{align*}
A_N \leq& \frac C{N^{\theta}}\E\Big[ \int_1^{(2\bg_1)^{d/q}} \frac 1 {(f(B(X_1,1)))^\theta} \log r \frac{\dd r}r\Big]
\leq \frac C{N^{\theta}}\E\Big[\frac{\log^2 (2\bg_1)^{d/q}}{(f(B(X_1,1)))^\theta}\Big] \leq \frac C{N^{\theta}}
\end{align*}
because $\E[(f(B(X_1,1)))^{-\theta}\log^2(2\bg_1)^{d/q}]<\infty$ 
(observe that $\log^2 (2g(x))^{d/q} \leq C \log^2(2+|x|)$).
All in all, $NW^{31}_N\leq C2^{-N}+CN^{-\theta}\leq CN^{-\theta}$ as desired.
\end{proof}

\begin{lem}\label{lv4}
If $\intrd (|x|^q+[\log^2 (2+|x|)][f(B(x,1))]^{-\theta})f(x)\dd x<\infty$ 
for some $q>0$ and some $\theta>0$, 
then  $NW^{32}_N\leq C N^{1-2\theta}$ for all $N\geq 2$ (we do not claim
that $|N W^{32}_N|\leq C N^{1-2\theta}$).
\end{lem}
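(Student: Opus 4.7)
The plan is to exploit the one-sided nature of the claim (the authors explicitly remark that $|NW^{32}_N|$ is \emph{not} bounded this way) by discarding the nonpositive summand in $\Gamma_N$ and then factorising the remaining quantity over $(r,s)$ via independence of $X_1,X_2$.

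For $r\geq s\geq 1$, dropping the nonnegative term $(1-\ba^N_1(Nr))^N(1-\ba^N_2(Ns))^N$ and using $\indiq_{\{|X_1-X_2|>r^{1/d}\}}\leq 1$ gives $\E[\Gamma_N(Nr,Ns)]\leq \E[(1-\bb^N_{12}(Nr,Ns))^{N-1}]$. Since $\bb^N_{12}(Nr,Ns)\geq \max(\ba^N_1(Nr),\ba^N_2(Ns))$, the elementary inequality $\min(a,b)\leq\sqrt{ab}$ yields
$$(1-\bb^N_{12}(Nr,Ns))^{N-1}\leq (1-\ba^N_1(Nr))^{(N-1)/2}(1-\ba^N_2(Ns))^{(N-1)/2}.$$
These factors depend on $X_1$ and $X_2$ separately, so independence produces $\E[\Gamma_N(Nr,Ns)]\leq q(r)q(s)$ with $q(t):=\E[(1-\ba^N_1(Nt))^{(N-1)/2}]$. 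Because $q(r)q(s)$ is symmetric in $(r,s)$, integrating over $\{1\leq s\leq r\}$ produces exactly half the integral over $[1,\infty)^2$, whence
$$W^{32}_N\leq \frac{N}{N+1}\Big(\int_1^\infty q(r)\,\frac{\dd r}{r}\Big)^2.$$

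It then suffices to show $\int_1^\infty q(r)\,\dd r/r\leq CN^{-\theta}$, which I would do exactly as in Lemma \ref{lv3}. Set $r_\star=(2\bg_1)^{d/q}$. For $r\geq r_\star$, Lemma \ref{tbust}(iv) gives $(1-\ba^N_1(Nr))^{(N-1)/2}\leq(\bg_1/r^{q/d})^{(N-1)/2}$, whose $\dd r/r$-integral is of order $2^{-N/2}/N$ after the change of variables $u=r^{q/d}$. For $r\in[1,r_\star]$, the inclusion $B(X_1,r^{1/d})\supset B(X_1,1)$ forces $\ba^N_1(Nr)\geq f(B(X_1,1))$, so $(1-\ba^N_1(Nr))^{(N-1)/2}\leq e^{-(N-1)f(B(X_1,1))/2}\leq C(Nf(B(X_1,1)))^{-\theta}$; the $\dd r/r$-integral on $[1,r_\star]$ contributes $\log r_\star\leq C\log(2+|X_1|)$. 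Since $\log(2+t)\leq (\log 2)^{-1}\log^2(2+t)$, the hypothesis $\intrd\log^2(2+|x|)(f(B(x,1)))^{-\theta}f(x)\dd x<\infty$ forces $\E[\log(2+|X_1|)(f(B(X_1,1)))^{-\theta}]<\infty$, and the whole contribution is $O(N^{-\theta})$. Combining, $W^{32}_N\leq CN^{-2\theta}$ and $NW^{32}_N\leq CN^{1-2\theta}$.

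The main insight is that halving the exponent from $N-1$ to $(N-1)/2$---the price paid to factorise $(1-\bb^N_{12})^{N-1}$ via $\min\leq\sqrt{\cdot}$---is harmless: each factor $q(\cdot)$ still achieves the $N^{-\theta}$ rate, and their product delivers the $N^{-2\theta}$ appearing in the claim. The only reason the absolute value $|NW^{32}_N|$ is not controlled is that discarding the negative term $(1-\ba^N_1)^N(1-\ba^N_2)^N$ at the outset breaks the symmetry.
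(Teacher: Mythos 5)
Your proof is correct and relies on the same key ingredients as the paper's: discard the nonnegative term $(1-\ba^N_1)^N(1-\ba^N_2)^N$, decouple $X_1$ and $X_2$ via $\min\leq\sqrt{\cdot}$ (the paper's version of the same step is $\bb^N_{12}\geq[f(B(X_1,1))f(B(X_2,1))]^{1/2}$), split the $r$-integral at $r_\star=(2\bg_1)^{d/q}$ with $\E[\bg_1]<\infty$ handling the tail, and use $(1-x)^{N-1}\leq e^{-(N-1)x}\leq C(Nx)^{-\alpha}$ for the body. Your packaging is slightly cleaner in that factorizing the integrand before taking expectations collapses the double integral to $\frac{N}{N+1}\bigl(\int_1^\infty q(r)\,\dd r/r\bigr)^2$ and reduces the lemma to a one-dimensional estimate in the spirit of Lemma \ref{lv3}, whereas the paper keeps the two-variable integral and extracts the $N^{-2\theta}$ directly from the lower bound on $\bb^N_{12}$.
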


\begin{proof}
Recall that $W^{32}_N=2(N/(N+1))\int_1^\infty\int_1^r \E[\Gamma_N(Nr,Ns)] \frac{\dd s}{s}\frac{\dd r} r$
and that we have the obvious bound $\Gamma_N(Nr,Ns)\leq (1-\bb^N_{12}(Nr,Ns))^{N-1}$.
Introducing  $g(x)=1\lor \E[|X_1-x|^q]$ as in Lemma \ref{tbust}, we write
$W^{32}_N\leq 2(A_N+B_N)$, where
\begin{gather*}
A_N= \E\Big[\int_1^{(2\bg_1)^{d/q}}\int_1^s(1-\bb^N_{12}(rN,sN))^{N-1}
\frac{\dd s}s\frac{\dd r}r\Big],\\
B_N= \E\Big[\int_{(2\bg_1)^{d/q}}^\infty\int_1^s(1-\bb^N_{12}(rN,sN))^{N-1}
\frac{\dd s}s\frac{\dd r}r\Big].
\end{gather*}

First, we use that $1-\bb^N_{12}(rN,sN)\leq 1-\ba^N_{1}(rN)\leq \bg_1r^{-q/d}$  by Lemma \ref{tbust}-(iv), whence
$$
B_N \leq  \E\Big[\int_{(2\bg_1)^{d/q}}^\infty \Big(\frac {\bg_1}{r^{q/d}}\Big)^{N-1}
\log r \frac{\dd r}r\Big]\leq \frac C {2^{N}}
$$
as in the previous proof (here $N\geq 2$).

\vip

Next, there is a constant $C$ such that $(1-x)^{N-1}\leq e^{-(N-1)x}
\leq C/(Nx)^{2\theta}$ for all $N\geq 2$, all $x\in (0,1]$.
Moreover, for all $r,s \geq 1$,
$$
\bb^N_{12}(Nr,Ns) \geq f(B(X_1,1))\lor f(B(X_2,1))
\geq [f(B(X_1,1)) f(B(X_2,1))]^{1/2}.
$$
Thus
$(1-\bb^N_{12}(Nr,Ns))^{N-1} \leq CN^{-2\theta} 
[f(B(X_1,1))f(B(X_2,1))]^{-\theta}$. Since  $2\int_1^a \int_1^r \frac{\dd s}{s}\frac{\dd r} r=
\log^2 a$,
$$
A_N \leq \frac C{N^{2\theta}} \E\Big[\frac{\log^2 (2\bg_1)^{d/q}}{[f(B(X_1,1))f(B(X_2,1))]^{\theta}}\Big]
\leq \frac C{N^{2\theta}} \E\Big[\frac{\log^2 (2+|X_1|)}{[f(B(X_1,1))]^{\theta}}\Big] 
\E\Big[\frac 1{[f(B(X_2,1))]^{\theta}}\Big] \leq \frac C{N^{2\theta}}.
$$
We used that $\log^2(2g(x))^{d/q}\leq C\log^2(2+|x|)$, recall that $g(x)\leq C(1+|x|^q)$. 
All in all, $W^{32}_N\leq C N^{-2\theta} + C 2^{-N}\leq C N^{-2\theta} $.
\end{proof}

Finally, we treat $W^{31,\e}_N$ and $W^{32,\e}_N$, which are more difficult.

\begin{lem}\label{lv5}
If $\intrd (M(x)/m(x))(1+|\log m(x)|)f(x)\dd x<\infty$,
then $N(W^{12,1}_N+W^{22,1})\leq C$ for all $N\geq 2$ and $\limsup_{\e\to 0}\sup_{N\geq 1/\e} N(W^{12,\e}_N
+W^{22,\e}_N)\leq 0$.
\end{lem}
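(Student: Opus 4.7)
The plan is to bound $\E[\Gamma_N(r,s)]$ by splitting according to the geometric relation between the balls $B(X_1,(r/N)^{1/d})$ and $B(X_2,(s/N)^{1/d})$. I would introduce the event $D=\{|X_1-X_2|>(r/N)^{1/d}+(s/N)^{1/d}\}$ on which the balls are disjoint. On $D$ we have $\bb^N_{12}(r,s)=\ba^N_1(r)+\ba^N_2(s)$, and combining this with the pointwise inequality $1-\ba^N_1-\ba^N_2\leq(1-\ba^N_1)(1-\ba^N_2)$ yields
\[
\Gamma_N\indiq_D \leq (1-\ba^N_1)^{N-1}(1-\ba^N_2)^{N-1}(\ba^N_1+\ba^N_2).
\]
On $D^c$ the balls overlap, but this is a rare event with $\Pr(D^c\mid X_1)\leq C\bM_1(r+s)/N$, and the one-sided bound $\Gamma_N\indiq_{D^c}\leq\indiq_{D^c}(1-\ba^N_1\vee\ba^N_2)^{N-1}$ will suffice there.

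After taking expectation, the independence of $X_1$ and $X_2$ factorizes the $D$-contribution into products such as $\E[(1-\ba^N_1(r))^{N-1}\ba^N_1(r)]\,\E[(1-\ba^N_2(s))^{N-1}]$ and its symmetric partner. Lemma \ref{tbust} will then supply $(1-\ba^N_i)^{N-1}\leq e^{-\bm_i r/2}$ and $\ba^N_i\leq v_d\bM_i r/N$, reducing everything to moments of $\bM_i$ and $\bm_i$ weighted by exponentials. Next I would change variables to $u=\e r,v=\e s$ in $W^{12,\e}_N$ and $u=r/\e,v=s/\e$ in $W^{22,\e}_N$, merging both terms into a single integration of $\E[\Gamma_N(u,v)]/(uv)$ over the regions $\{v\leq u\leq \e\}$ and $\{1/\e\leq v\leq u\leq N\}$. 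As $\e\to 0$ the first region collapses, while in the second the integrand becomes exponentially small through factors like $\E[e^{-\bm_i/\e}]$; for $\e=1$ the two contributions will combine to give the uniform bound $N(W^{12,1}_N+W^{22,1}_N)\leq C$.

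The main obstacle will be that the one-sided bound on $\Gamma_N\indiq_D$ by itself produces a logarithmic divergence as $v\to 0$ (since $\E[(1-\ba^N_2(v))^{N-1}]\to 1$ while one divides by $v$), even though $\E[\Gamma_N(u,v)]$ itself vanishes in that limit by direct computation. To repair this I would add back the negative contribution on the subevent $\{|X_1-X_2|\leq((r\vee s)/N)^{1/d}\}$, where $\Gamma_N=-(1-\ba^N_1)^N(1-\ba^N_2)^N$, producing a refined bound that retains the true cancellation at $v=0$. With this sharper estimate in hand, the hypothesis $\E[(\bM/\bm)(1+|\log\bm|)]<\infty$ closes the argument: the ratio $\bM/\bm$ arises from integrals such as $\int \bM e^{-\bm v}\,\dd v/v$ after cutoff near $\bm v\asymp 1$, while $|\log\bm|$ appears via $\int e^{-\bm r}(\log r)\,\dd r/r\asymp(\log\bm)^2$.
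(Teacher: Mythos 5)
Your plan is essentially the paper's. Both decompose $\Gamma_N$ according to the geometry of the two balls, bound the disjoint-ball piece via $(1-a-b)\leq(1-a)(1-b)$, and recover the cancellation at small $s$ from the indicator on $\{|X_1-X_2|\leq(r/N)^{1/d}\}$. You have correctly located the crux of the proof. Two points in your sketch, however, are not yet tight enough to close the argument.

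First, your claim that $\Pr(D^c\mid X_1)\leq C\bM_1(r+s)/N$ together with the crude bound $\Gamma_N\indiq_{D^c}\leq\indiq_{D^c}(1-\ba^N_1)^{N-1}$ ``will suffice'' is not correct as stated: after dividing by $s$ and integrating $\dd s$, the $r$-part of $(r+s)$ produces $\int_0^r \dd s/s=\infty$. After you remove the inner-ball event (to feed the cancellation), the event that is actually left to bound is the annulus $(r/N)^{1/d}<|X_1-X_2|\leq(r/N)^{1/d}+(s/N)^{1/d}$, whose conditional probability is $\leq C\bM_1\,s^{1/d}r^{(d-1)/d}/N$ for $s<r$, not $\leq C\bM_1(r+s)/N$; the extra factor $s^{1/d}$ is exactly what makes $\int_0^r s^{1/d}\,\dd s/s$ converge. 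This sharper estimate on the annulus is indispensable, and the paper isolates it as its first step.

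Second, the ``add back the negative contribution'' step does not immediately give an exact cancellation, because $\E\big[\indiq_{\{|X_1-X_2|\leq(r/N)^{1/d}\}}(1-\ba^N_2(s))^{N}\,\big|\,X_1\big]$ does not factor as $\ba^N_1(r)\,\E[(1-\ba^N_2(s))^{N}]$: both factors depend on $X_2$. The paper circumvents this by replacing $\ba^N_2(s)$ with the $X_1$-measurable quantity $\ba^N_1(s)$ before conditioning, so that $\E[\indiq\mid X_1]=\ba^N_1(r)$ kills the leading term identically; the two swap errors (its $\Gamma_N^{12}$ and $\Gamma_N^{14}$) then have to be bounded separately, which is doable but is genuinely additional work your sketch elides. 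Once you incorporate the annulus estimate and make the swap explicit, the remainder of your plan (the change of variables in $\e$, the dominated-convergence argument, and the reduction to $\E[(\bM/\bm)(1+|\log\bm|)]<\infty$) matches the paper's and is sound.
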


\begin{proof}
Recall that $\Gamma_N(r,s) =\indiq_{\{|X_1-X_2|>(\frac{r\lor s}N )^{1/d}\}}
(1- \bb^N_{12}(r,s))^{N-1}-(1-\ba^N_1(r))^N(1-\ba^N_2(s))^N$ and that,
for $N\geq 1/\e$,
\begin{gather*}
W^{12,\e}_N=\frac{2N}{N+1}\int_0^1\int_0^r \E[\Gamma_N(\e r,\e s)] \frac{\dd s}{s}\frac{\dd r} r
=\frac{2N}{N+1}\int_0^\e\int_0^r \E[\Gamma_N(r,s)] \frac{\dd s}{s}\frac{\dd r} r,\\
W^{22,\e}_N=\frac{2N}{N+1}\int_1^{\e N}\int_1^r \E[\Gamma_N(r/\e,s/\e)] \frac{\dd s}{s}\frac{\dd r} r
= \frac{2N}{N+1}\int_{1/\e}^{N}\int_{1/\e}^r \E[\Gamma_N(r,s)] \frac{\dd s}{s}\frac{\dd r} r.
\end{gather*}
We observe that $|X_1-X_2|>(\frac r N )^{1/d}+ (\frac s N)^{1/d}$ implies
$B(X_1,(r/N)^{1/d})\cap B(X_2,(s/N)^{1/d})=\emptyset$ and thus $\bb^N_{12}(r,s)=\ba^N_1(r)+\ba^N_2(s)$.
Hence if $r>s>0$, we have $\Gamma_N(r,s)=\Gamma_N^1(r,s)+\Gamma_N^2(r,s)$,
where
\begin{align*}
\Gamma_N^1(r,s) =&\indiq_{\{|X_1-X_2|>(\frac r N )^{1/d}+ (\frac s N)^{1/d}\}}
(1- \ba^N_{1}(r)-\ba^N_2(s))^{N-1}-(1-\ba^N_1(r))^N(1-\ba^N_2(s))^N,\\
\Gamma_N^2(r,s) =&\indiq_{\{(\frac{r}N )^{1/d}<|X_1-X_2|\leq(\frac r N )^{1/d}+ (\frac s N)^{1/d}\}}
(1- \bb^N_{12}(r,s))^{N-1}-(1-\ba^N_1(r))^N(1-\ba^N_2(s))^N.
\end{align*}
Using next that 
$(1-\ba^N_1(r)-\ba^N_2(s))\leq (1-\ba^N_1(r))(1-\ba^N_2(s))$, we may write, if $r> s >0$,
\begin{align*}
\Gamma_N^1(r,s)\leq& \indiq_{\{|X_1-X_2|>(\frac r N )^{1/d}\}}
(1- \ba^N_{1}(r))^{N-1}(1-\ba^N_2(s))^{N-1}-(1-\ba^N_1(r))^N(1-\ba^N_2(s))^N\\
\leq& (1- \ba^N_{1}(r))^{N-1}(1-\ba^N_2(s))^{N-1}
\Big[\ba^N_1(r)+\ba^N_2(s)- \indiq_{\{|X_1-X_2|\leq(\frac r N )^{1/d}\}}\Big]\\
=&\Gamma_N^{11}(r,s)+\Gamma_N^{12}(r,s)+\Gamma_N^{13}(r,s)+\Gamma_N^{14}(r,s),
\end{align*}
where
\begin{align*}
\Gamma_N^{11}(r,s)=&(1- \ba^N_{1}(r))^{N-1}(1-\ba^N_2(s))^{N-1} \ba^N_2(s) ,\\
\Gamma_N^{12}(r,s)=&(1- \ba^N_{1}(r))^{N-1}[(1- \ba^N_2(s))^{N-1}- (1-\ba^N_1(s))^{N-1}]\ba^N_{1}(r),\\
\Gamma_N^{13}(r,s)=&(1- \ba^N_{1}(r))^{N-1}(1- \ba^N_1(s))^{N-1}[\ba^N_{1}(r)-\indiq_{\{|X_1-X_2|\leq(\frac r N )^{1/d}\}}] ,\\
\Gamma_N^{14}(r,s)=&(1- \ba^N_{1}(r))^{N-1}[(1- \ba^N_1(s))^{N-1}- (1- \ba^N_2(s))^{N-1}]
\indiq_{\{|X_1-X_2|\leq (\frac r N )^{1/d}\}}.
\end{align*}

{\it Step 1.} 
We first study $\Gamma^{2}_N$. For $0<s<r<N$, since $\bb^N_{12}(r,s)\geq \ba^N_1(r)$, 
\begin{align*}
\Gamma_N^2(r,s) \leq \indiq_{\{(\frac{r}N )^{1/d}<|X_1-X_2|\leq(\frac r N )^{1/d}+ (\frac s N)^{1/d}\}}
(1- \ba^N_{1}(r))^{N-1} \leq \indiq_{\{X_2 \in C_N(X_1,r,s)\}}e^{- \bm_1 r/2}.
\end{align*}
We introduced the annulus $C_N(x,r,s)=B(x,(r/N)^{1/d}+ (s/N)^{1/d})
\setminus B(x,(r/N)^{1/d})$ and we used Lemma \ref{tbust}-(iii).
Consequently,
\begin{align*}
\E[\Gamma_N^2(r,s)] \leq \E\Big[f(C_N(X_1,r,s)) e^{- \bm_1 r/2}\Big]
\leq \frac{v_d}N\E\Big[\bM_1 e^{-\bm_1 r/2}\Big]\Big((r^{1/d}+s^{1/d})^d - r \Big),
\end{align*}
because $\sup_{C_N(X_1,r,s)}f \leq \bM_1$ (since $(r/N)^{1/d}+ (s/N)^{1/d}\leq 2$)
and because $\Leb(C_N(X_1,r,s))=(v_d/N)((r^{1/d}+s^{1/d})^d - r )$. But (recall that $r>s>0$)
\begin{align*}
(r^{1/d}+s^{1/d})^d - r\leq d s^{1/d}(r^{1/d}+s^{1/d})^{d-1}\leq d s^{1/d}(2r^{1/d})^{d-1}. 
\end{align*}
We have checked that $\E[\Gamma_N^2(r,s)] \leq CN^{-1}\E[\bM_1e^{- \bm_1 r/2}]s^{1/d}r^{(d-1)/d}$
for all $0<s<r<N$.

\vip

{\it Step 2.}
Next, $\Pr( |X_1-X_2|\leq (r /N )^{1/d}\,|\, X_1)=\ba^N_{1}(r)$, so that
$\E[\Gamma_N^{13}(r,s)]=0$.

\vip

{\it Step 3.}
Lemma \ref{tbust}-(i)-(iii) gives us 
$\E[\Gamma_N^{11}(r,s)]\leq v_d N^{-1}\E[\bM_2 \exp(- \bm_1 r/2-\bm_2 s/2)] s$.

\vip

{\it Step 4.} Since $(1-x)^{N-1}-(1-y)^{N-1}\leq\indiq_{\{x\leq y\}}
(N-1)(1-x)^{N-2}(y-x)\leq N y$ for all $x,y\in[0,1]$,
\begin{align*}
\Gamma_N^{12}(r,s)\leq N (1-\ba^N_1(r))^{N-1}\ba^N_1(s)\ba^N_1(r)\leq v_d (1-\ba^N_1(r))^{N-1}\ba^N_1(r) \bM_1 s .
\end{align*}
by Lemma \ref{tbust}-(i). But since $N\geq 2$,
$$
(1-\ba^N_1(r))^{N-1}\ba^N_1(r)\leq \ba^N_1(r)e^{-N\ba^N_1(r)/2}
\leq N^{-1}[\bm_1 r e^{-\bm_1 r/2} + \indiq_{\{\bm_1 r<2\}}].
$$
We finally used that $N\ba^N_1(r)\geq  \bm_1 r$
and that the function $f(x)=xe^{-x/2}$ is bounded by $1$ and decreasing on $[2,\infty)$.
Thus $\E[\Gamma_N^{12}(r,s)]\leq v_d  N^{-1}\E[(\bm_1 r e^{-\bm_1 r/2} + \indiq_{\{\bm_1 r<2\}})\bM_1 ]s$.

\vip

{\it Step 5.} Since $(1-x)^{N-1}-(1-y)^{N-1}\leq N y$ for all $x,y\in[0,1]$ as in Step 4,
$$
\Gamma_N^{14}(r,s) \leq N(1-\ba^N_1(r))^{N-1}\ba^N_2(s)\indiq_{\{|X_1-X_2|\leq(\frac rN)^{1/d}\}}.
$$
But $|X_1-X_2|\leq(r/N)^{1/d}\leq 1$ implies that $\sup_{B(X_2,(s/N)^{1/d})}f \leq \sup_{B(X_1,2)}f \leq
\bM_1$, whence $\ba^N_2(s)\leq \bM_1\Leb(B(X_2,(s/N)^{1/d})) = v_d N^{-1} \bM_1 s$.
Consequently,
\begin{align*}
\Gamma_N^{14}(r,s)\leq & v_d  (1-\ba^N_1(r))^{N-1} \bM_1 s \indiq_{\{|X_1-X_2|\leq(r/N)^{1/d}\}}.
\end{align*}
Taking first the expectation knowing $X_1$, we find
$\E[\Gamma_N^{14}(r,s)] \leq  v_d\E[(1-\ba^N_1(r))^{N-1} \ba^N_1(r) \bM_1 ]s$.
We thus conclude exactly as in Step 4 that
$\E[\Gamma_N^{14}(r,s)]\leq v_d  N^{-1}\E[(\bm_1 r e^{-\bm_1 r/2} + \indiq_{\{\bm_1 r<2\}})\bM_1]s$.

\vip

{\it Step 6.} Gathering the bounds found in the five first steps, we see that
$\E[\Gamma_N(r,s)] \leq C N^{-1} F(r,s)$ for all $0<s<r<N$, where
$$
F(r,s)=\E\Big[\bM_1e^{- \bm_1 r/2}s^{1/d}r^{(d-1)/d}+\bM_2 e^{- \bm_1 r/2-\bm_2 s/2}s+
(\bm_1 r e^{-\bm_1 r/2} + \indiq_{\{\bm_1 r<2\}})\bM_1 s\Big].
$$ 
Thus for all $\e\in(0,1]$, all $N\geq (1/\e)\lor 2$,
$$
N(W^{12,\e}_N+W^{22,\e}_N)\leq C \int_0^\infty \int_0^r \frac{F(r,s)}{rs}
\indiq_{\{r\leq \e \hbox{ or }r\geq 1/\e\}} \dd s \dd r.
$$
If we show that 
$\int_0^\infty \int_0^r F(r,s) \frac{\dd s}{s}\frac{\dd r}{r}<\infty$, this will imply
that $\sup_{N\geq 2}N(W^{12,1}_N+W^{22,1}_N)<\infty$ and also that 
$\limsup_{\e \to 0}\sup_{N\geq 1/\e}N(W^{12,\e}_N+W^{22,\e}_N)\leq 0$ by dominated convergence.
First,
\begin{align*}
\int_0^r \frac{F(r,s)}{rs}\dd s \leq & C \E\Big[\bM_1 e^{-\bm_1 r/2} + \frac{\bM_2}{\bm_2 r}e^{-\bm_1 r /2}(1-e^{-\bm_2 r /2})
+\bM_1 \bm_1 r e^{-\bm_1 r/2} + \bM_1 \indiq_{\{\bm_1 r < 2\}}    \Big]\\
\leq & C \E\Big[\bM_1 e^{-\bm_1 r/4} + \frac{\bM_2}{\bm_2 r}e^{-\bm_1 r /2}(1-e^{-\bm_2 r /2})\Big].
\end{align*}
For the last inequality, we  used that $e^{-x/2}+xe^{-x/2} + \indiq_{\{x<2\}} \leq C e^{-x/4}$  for all $x\geq 0$.
Since now $\intoi e^{-ar/2}(1-e^{-br/2})\frac{\dd r}{r}=\log(1+b/a)$ for all $a>0$, $b\geq 0$,
which can be checked by differentiating both side in $b$ and by using the value at $b=0$, we conclude that
\begin{align*}
\intoi\!\! \int_0^r \frac{F(r,s)}{rs}\dd s \dd r \leq& C\E\Big[\frac{\bM_1}{\bm_1} + \frac{\bM_2}{\bm_2}
\log \Big(1+\frac{\bm_2}{\bm_1}\Big)\Big]\leq C\E\Big[\frac{\bM_1}{\bm_1} + \frac{\bM_2}{\bm_2}
(1+|\log\bm_1|+|\log\bm_2|)\Big].
\end{align*}
This last quantity is finite by assumption and because $(\bm_1,\bM_1)$ and $(\bm_2,\bM_2)$
are independent and have the same law.
\end{proof}

Finally, we give the

\begin{proof}[Proof of Theorem \ref{thv}]
We assume that $f$ is continuous and bounded and 
\eqref{condv} for some $q>0$ and some $\theta \in (0,1]$ (and with $r_0=1$).
This implies that $\intrd f(x) \log^2f(x) \dd x<\infty$, because $m\leq v_d f \leq C$,
so that $\log^2 f \leq C(1+\log^2 m)$.
We thus can apply Proposition \ref{abb} and all the lemmas of the section.

\vip

We first check (ii) and thus assume that $\theta\in(0,1/2]$.
We then use  Lemma \ref{dec} with $\e=1$ and write $H_N=H_N^1+K^{1,1}_N+K^{2,1}_N+K^{3}_N$. 
But $H_N^1=0$ a.s., whence $\Var H_N\leq 3\Var K^{1,1}_N+3\Var K^{2,1}_N+3\Var K^{3}_N
\leq 3W^{11,1}_N+3W^{12,1}_N+3W^{21,1}_N+3W^{22,1}_N+3W^{31}_N+3W^{32}_N$ by Lemma \ref{expr2}.
By Lemmas \ref{lv1}, \ref{lv2}, \ref{lv3}, \ref{lv4} and \ref{lv5}, we find that
$\Var H_N \leq C N^{-1} + C N^{-1-\theta}+ C N^{-2\theta}$ for all $N\geq 2$,
whence $\Var H_N \leq CN^{-2\theta}$. The case $N=1$ is of course not an issue.

\vip

We next prove (i) and thus assume that $\theta \in (1/2,1]$.
For each $\e\in (0,1]$ and $N\geq 1/\e$, we write  $H_N=H_N^\e+K^{1,\e}_N+K^{2,\e}_N+K^{3}_N$ as in Lemma
\ref{dec}. We then infer from Proposition \ref{abb} that 
for each $\e\in(0,1]$, $\sqrt N(H_N^\e-\E[H_N^\e])$ goes in law to $\cN(0,\sigma_\e^2(f))$
as $N\to \infty$. We also know that $\lim_{\e\to 0} \sigma_\e^2(f)=\sigma^2(f)$ by Lemma \ref{lv0}.
To conclude that $\sqrt N(H_N-\E[H_N])$ goes in law to $\cN(0,\sigma^2(f))$, it only
remains to verify that $\lim_{\e\to 0} \sup_{N\geq 1/\e} N \Var (K^{1,\e}_N+K^{2,\e}_N+K^{3}_N)=0$.
But we have $\Var (K^{1,\e}_N+K^{2,\e}_N+K^{3}_N)\leq 3 (W^{11,\e}_N+W^{12,\e}_N+W^{21,\e}_N+W^{22,\e}_N+W^{31}_N+W^{32}_N)$,
see Lemma \ref{expr2}.
By Lemmas \ref{lv1}, \ref{lv2} and \ref{lv5}, it holds that
$\limsup_{\e\to 0} \sup_{N\geq 1/\e} N(W^{11,\e}_N+W^{12,\e}_N+W^{21,\e}_N+W^{22,\e}_N)\leq 0$.
We then infer from Lemma \ref{lv3} that $\sup_{N\geq 1/\e} NW^{31}_N\leq C \e^\theta$
and from Lemma \ref{lv4} that $\sup_{N\geq 1/\e} NW^{32}_N\leq C \e^{2\theta-1}$. Both
tend to $0$ as $\e\to 0$ because $\theta\in(1/2,1]$.
All in all, $\limsup_{\e\to 0} \sup_{N\geq 1/\e} N \Var (K^{1,\e}_N+K^{2,\e}_N+K^{3}_N)\leq 0$ as desired.
\end{proof}

\section{Estimation of the variance}\label{ee}

The goal of this section is to prove Proposition \ref{mp}. We have already done
most of the work.

\begin{proof}[Proof of Proposition \ref{mp}]
We assume that $f$ is bounded and continuous and satisfies \eqref{condestiv} for some $q>0$ and some $\theta>0$
(with $r_0=1$).
We write $V_N=A_N-B_N^2+\chi_d-\pi^2/6$, where
$$
A_N=\frac 1{N+1}\sum_{i=1}^{N+1} \log^2 Y^N_i \quad\hbox{and}\quad B_N=\frac 1{N+1}\sum_{i=1}^{N+1} \log Y^N_i.
$$
For $\e\in(0,1]$, we recall that $\log_\e y = \log(\e\lor y \land(1/\e))$ and define, for $N\geq 1/\e$,
$$
A_N^\e=\frac 1{N+1}\sum_{i=1}^{N+1} \log^2_\e Y^N_i \quad\hbox{and}\quad B_N^\e=\frac 1{N+1}\sum_{i=1}^{N+1} \log_\e Y^N_i.
$$

{\it Step 1.} For each fixed $\e\in (0,1]$, since $f:\rd\mapsto \rr_+$ and $\log_\e:[0,\infty)\mapsto \rr$ 
are bounded and continuous, we infer from \cite[Theorem 3.5]{bb} that
$\sup_{N\geq 1/\e} N (\Var(A^\e_N)+\Var(B^\e_N))<\infty$.

\vip

{\it Step 2.} Here we show that $Y^N_1$ goes in law to $\xi/(v_d\bff_1)$ as $N\to\infty$,
where $\xi\sim \Exp(1)$ is independent of $X_1$. For each $r>0$, we have
$\Pr(Y^N_1>r\,|\,X_1)=(1-\ba^N_1)^N=(1-f(B(X_1,(r/N)^{1/d})))^N$ by Lemma \ref{pdd}.
Since $f$ is continuous, $Nf(B(x,(r/N)^{1/d}))\to v_df(x)r$ as $N\to\infty$ for all $x\in\rd$.
Thus $\Pr(Y^N_1>r\,|\,X_1)$ a.s. tends to $\exp(-v_d \bff_1r)$ as $N\to \infty$. 
By dominated convergence, we conclude that
for each $r>0$, $\lim_{N\to\infty}\Pr(Y^N_1>r)=\E[\exp(-v_d\bff_1r)]$, which equals
$\Pr(\xi/(v_d\bff_1)>r)$.

\vip

{\it Step 3.} For each fixed $\e\in (0,1]$, since $\log_\e:[0,\infty)\mapsto \rr$ 
is bounded and continuous, we deduce from Step 2 that 
$\lim_{N\to \infty} \E[A^\e_N]=\lim_{N\to \infty} \E[\log^2_\e Y^N_1]=\E[\log^2_\e(\xi/(v_d\bff_1))]$
and that $\lim_{N\to \infty} \E[B^\e_N]=\lim_{N\to \infty} \E[\log_\e Y^N_1]=\E[\log_\e(\xi/(v_d\bff_1))]$.

\vip

{\it Step 4.} As seen in the first paragraph of the 
proof of Lemma \ref{lv0}, $\E[\log^2(\xi/(v_d\bff_1))]<\infty$
(we have $\intrd f(x)\log^2f(x)\dd x<\infty$ because $\intrd f(x)\log^2m(x)\dd x<\infty$
and $m\leq v_d f \leq C$ by assumption).
Thus $\lim_{\e\to 0}\E[\log^2_\e(\xi/(v_d\bff_1))]=\E[\log^2(\xi/(v_d\bff_1))]$
and $\lim_{\e\to 0}\E[\log_\e(\xi/(v_d\bff_1))]=\E[\log(\xi/(v_d\bff_1))]$
by dominated convergence.

\vip

{\it Step 5.} Here we verify that $\lim_{\e\to 0} \sup_{N\geq 1/\e} \E[|A_N^\e-A_N|]=0$.
It is checked similarly that $\lim_{\e\to 0} \sup_{N\geq 1/\e} \E[|B_N^\e-B_N|]=0$.
We recall that for all $\e>0$, all $N\geq 1/\e$, all $y\in(0,\infty)$,
$\log y = \log_\e y + \log [(y/\e)\land 1]+
\log [1\lor (\e y)\land (\e N)]+\log [(y/N)\lor 1]$ and (same formula with $\e=1$)
$\log y = \log [y\land 1]+\log [1\lor y \land N]+\log [(y/N)\lor 1]$,
see the proof of Lemma \ref{dec}. Starting from 
$|\log^2 y -\log^2_\e y |\leq 2|\log y| | \log y - \log_\e y|$, we end with
\begin{align*}
|\log^2 y -\log^2_\e y |
\leq  & 2\Big| \log [y\land 1]+
\log [1\lor y \land N]+\log [(y/N)\lor 1] 
\Big|\\
&\hskip2cm \times\Big| \log [(y/\e)\land 1]+
\log [1\lor (\e y)\land (\e N)]+\log [(y/N)\lor 1]  \Big|.
\end{align*}
Hence, by the Cauchy-Schwarz inequality, for $N\geq 1/\e$,
\begin{align*}
\E[|A_N^\e-A_N|]\leq& \E[|\log^2 Y^N_1 -\log^2_\e Y^N_1 |]\\
\leq&
C\E\Big[\log^2 [Y^N_1\land 1] + \log^2[1\lor Y^N_1 \land N] +\log^2[(Y^N_1/N)\lor 1] \Big]^{1/2}\\
&\times\E\Big[\log^2 [(Y^N_1/\e)\land 1]+
\log^2 [1\lor (\e Y^N_1)\land (\e N)]+\log^2 [(Y^N_1/N)\lor 1]  \Big]^{1/2}.
\end{align*}
With the notation of Lemma \ref{dec} (see also its proof), this precisely rewrites
\begin{align*}
\E[|A_N^\e-A_N|]\leq&C\Big( (N+1)W^{11,1}_N +(N+1)W^{21,1}_N+ (N+1)W^{31}_N\Big)^{1/2}\\
&\times \Big( (N+1)W^{11,\e}_N +(N+1)W^{21,\e}_N+ (N+1)W^{31}_N\Big)^{1/2}
\end{align*}
We then deduce from Lemmas \ref{lv1}, \ref{lv2} and \ref{lv3} that
$\lim_{\e\to 0}\sup_{N\geq 1/\e} \E[|A_N^\e-A_N|]=0$. We can apply these three lemmas thanks to
\eqref{condestiv} (with $r_0=1$) and because $f$ is bounded.

\vip

{\it Step 6.} Here we conclude that $\lim_{N\to\infty}\E[|A_N-\E[\log^2(\xi/(v_d\bff_1))]|]=0$
as $N\to\infty$. It is checked similarly that $\lim_{N\to\infty}\E[|B_N-\E[\log(\xi/(v_d\bff_1))]|]=0$.
We fix $\e\in (0,1]$ and write
\begin{align*}
\E[|A_N-\E[\log^2(\xi/(v_d\bff_1))]|]\leq& \E[|A_N-A_N^\e|]+\E[|A_N^\e-\E[\log^2_\e(\xi/(v_d\bff_1))]|]\\
&+|\E[\log^2_\e(\xi/(v_d\bff_1))]-\E[\log^2(\xi/(v_d\bff_1))]|.
\end{align*}
Taking first the limsup as $N \to \infty$ (so that the middle term of the RHS disappears by Steps 1 and 3)
and then the limsup as $\e\to 0$ (using Steps 4 and 5) completes the step.

\vip

{\it Step 7.} By Step 6, $V_N=A_N-B_N^2+\chi_d-\pi^2/6$ goes to 
$\Sigma=\Var(\log(\xi/(v_d\bff_1)))+\chi_d-\pi^2/6$ in probability.
But $\Var(\log(\xi/(v_d\bff_1)))=\intrd f(x)\log^2f(x)\dd x - (H(f))^2+\pi^2/6$,
see the first paragraph of the proof of Lemma \ref{lv0}. Thus
$\Sigma = \intrd f(x)\log^2f(x)\dd x - (H(f))^2+\chi_d=\sigma^2(f)$.
\end{proof}

\section{A tedious Taylor approximation}\label{dd}

Here we study in details how well $(1-f(B(x,(r/N)^{1/d}))^N$
approximates $\exp(-v_d f(x)r)$.

\begin{lem}\label{tedious}
Let  $\beta>0$, set $\rho=\min\{\beta,2\}$, 
$k=\max\{i\in\nn \; : \; i<\beta\}$ and $\ell=\max\{i\in\nn\;:\; 2i<\beta\}$.
Assume  that $\kappa=\sup_{x\in\rd}f(B(x,1))<1$, that $f\in \CC^\beta(\rd)$ and recall that
$M$ and $G_\beta$ were defined in \eqref{mM} and \eqref{gbeta}.
For each $N\geq 1$, 
consider $h_N:\rd\mapsto(0,\infty)$ such that
for all $x\in \rd$,
\begin{equation}\label{chn}
h_N(x)\leq \min\Big\{N, \frac{N^{\rho/(d+\rho)}}{(G_\beta(x))^{d/(d+\rho)}}, \frac{\sqrt N}{M(x)} \Big\}.
\end{equation}

(i) If $\beta \in (0,2]$, for all $N\geq 1$, all $x\in \rd$, all $r \in [0,h_N(x)]$,
\begin{align*}
[1-f(B(x,(r/N)^{1/d}))]^N = e^{-v_df(x)r}(1+R_N(x,r)),
\end{align*}
where $R_N$ satisfies, for some constant $C>0$, for all $N\geq 1$, all $x\in\rd$, all $r \in [0,h_N(x)]$,
\begin{align*}
|R_N(x,r)| \leq C \Big(\frac{r^2M^2(x)}{N} + \Big(\frac rN\Big)^{\beta/d} rG_\beta(x)\Big).
\end{align*}

(ii) If $\beta>2$, for all $N\geq 1$, all $x\in \rd$, all $r \in [0,h_N(x)]$,
\begin{align*}
[1-f(B(x,(r/N)^{1/d}))]^N = e^{-v_df(x)r}\Big(1+ \sum_{i=1}^\ell \frac{g_i(x,r)}{N^{2i/d}}  +S_N(x,r)\Big),
\end{align*}
where $S_N$ satisfies, for some constant $C>0$, for all $N\geq 1$, all $x\in\rd$, all $r\in [0,h_N(x)]$,
$$
|S_N(x,r)|\leq C \frac{r^2M^2(x)}{N} + C \Big(\frac rN\Big)^{\beta/d} 
[rG_\beta(x)+(rG_\beta(x))^{\beta/2}]
$$
and where the functions $g_1,\dots,g_\ell:\rd\times[0,\infty)\mapsto \rr$ (not depending on $N$)
satisfy, for some constant $C>0$, for all $x\in\rd$ and all $r\geq 0$,
\begin{align*}
|g_i(x,r)|\leq& C r^{2i/d}[rG_\beta(x)+(r G_\beta(x))^i].
\end{align*}
\end{lem}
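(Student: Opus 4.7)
The plan is a three-phase computation: Taylor-expand $f$ on the ball $B(x,(r/N)^{1/d})$, integrate to get an asymptotic development of $a_N(x,r) := f(B(x,(r/N)^{1/d}))$ in powers of $N^{-2/d}$, and finally exponentiate $(1-a_N)^N = e^{N\log(1-a_N)}$. The cutoff hypothesis \eqref{chn} is what keeps $a_N$ small enough that $\log(1-a_N)$ admits a convergent power-series expansion.

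Setting $\epsilon = (r/N)^{1/d} \leq 1$, the definition \eqref{gbeta} of $G_\beta$ gives for $y \in B(x,\epsilon)$
\[
f(y) = \sum_{i=0}^{k} \frac{1}{i!} D^i f(x) \cdot (y-x)^{\otimes i} + \eta(x,y), \qquad |\eta(x,y)| \leq C G_\beta(x)|y-x|^\beta.
\]
Integrating over the ball $B(x,\epsilon)$, which is symmetric about $x$, annihilates the odd-order contributions and yields
\[
a_N = v_d f(x) \frac{r}{N} + \sum_{j=1}^{\ell} C_j(x)\Bigl(\frac{r}{N}\Bigr)^{1+2j/d} + O\Bigl(G_\beta(x)\Bigl(\frac{r}{N}\Bigr)^{1+\beta/d}\Bigr),
\]
where the coefficients $C_j(x)$ are integrals of $D^{2j}f(x)$ against even monomials on the unit ball and satisfy $|C_j(x)| \leq C G_\beta(x)$.

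Next, Lemma \ref{tbust}-(i) combined with $h_N(x)\leq \sqrt N/M(x)$ gives $a_N \leq v_d/\sqrt N$, so for large $N$ the expansion $\log(1-a_N) = -a_N - a_N^2/2 + O(a_N^3)$ applies; multiplying by $N$ and rearranging gives
\[
N\log(1-a_N) = -v_d f(x) r + z_N,
\]
where $z_N$ collects the corrections of the form $-C_j(x) r(r/N)^{2j/d}$ coming from the expansion of $a_N$, the quadratic $-Na_N^2/2 = O(r^2M^2(x)/N)$, and higher-order remainders $Na_N^k$ for $k\geq 3$, all bounded by $(r^2 M^2(x)/N)\cdot N^{-1/2}$. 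Exponentiating,
\[
(1-a_N)^N = e^{-v_d f(x) r}\Bigl(1 + z_N + \frac{z_N^2}{2} + \cdots\Bigr),
\]
and the cutoff \eqref{chn} guarantees that each summand of $z_N$ has absolute value bounded by a constant, so the series can be truncated at order $\ell$.

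The main obstacle lies in the bookkeeping of this last step for part (ii): a $j$-fold product of terms of the shape $r(r/N)^{2i_s/d}G_\beta(x)$ produces a factor $(rG_\beta(x))^j(r/N)^{2i/d}$ with $i = i_1+\cdots+i_j$, which accounts at once for the $(rG_\beta(x))^i$ term in the bound on $|g_i|$ and, once $2i$ exceeds $\beta$, for the $(rG_\beta(x))^{\beta/2}$ contribution in the bound on $|S_N|$. One must also mix these with the quadratic $r^2M^2(x)/N$ and verify that every cross-product either fits into some $g_i$ (total $N$-exponent exactly $N^{-2i/d}$ with $i \leq \ell$) or decays at least as $N^{-\beta/d}$ and is absorbed into $S_N$. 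Part (i) corresponds to $\ell = 0$ and requires no such sorting: $R_N$ comes directly from exponentiating the Hölder remainder together with the quadratic contribution from Step 3.
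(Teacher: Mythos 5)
Your plan mirrors the paper's proof step by step: Taylor-expand $f$ on $B(x,(r/N)^{1/d})$, integrate over the symmetric ball to kill the odd moments and obtain a development of $a_N:=f(B(x,(r/N)^{1/d}))$ in powers of $(r/N)^{2/d}$, then take $N\log(1-a_N)$ and exponentiate, sorting monomials by their $N^{-2/d}$-exponent into the $g_i$ (exponent $\leq 2\ell/d$) and into $S_N$ (exponent $\geq \beta/d$). The multinomial bookkeeping you flag as the main obstacle is exactly what the paper carries out, and your accounting of which products go where is consistent with it.

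There is, however, one genuine gap. You claim that the cutoff $h_N(x)\leq \sqrt N/M(x)$, via Lemma~\ref{tbust}-(i), gives $a_N\leq v_d/\sqrt N$ and that this is ``what keeps $a_N$ small enough that $\log(1-a_N)$ admits a convergent power-series expansion,'' adding ``for large $N$ the expansion applies.'' But the lemma must hold for \emph{every} $N\geq 1$, and $v_d/\sqrt N$ is not less than $1$ when $N$ is small (for instance with $|\cdot|_\infty$ in dimension $d$, $v_d=2^d$, so $v_d/\sqrt N\geq 1$ unless $N>4^d$). So the cutoff alone does not make $\log(1-a_N)$ well-defined, and your argument as stated fails for small $N$. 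The missing ingredient is precisely the hypothesis $\kappa=\sup_x f(B(x,1))<1$, which you never invoke: since $r\leq h_N(x)\leq N$ one has $(r/N)^{1/d}\leq 1$ and hence $a_N\leq\kappa<1$ uniformly in $N$ and $r$. That bound is what justifies writing $\log(1-a_N)=-a_N+a_N^2\,\theta_{N}$ with $\theta_N$ uniformly bounded (a second-order Taylor estimate on $[0,\kappa]$ suffices; no convergent series is actually needed). Once this is fixed, the bound $Na_N^2\leq v_d^2 r^2 M^2(x)/N$ and your treatment of the higher corrections go through exactly as you describe.
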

 
\begin{proof} 
{\it Step 1.} For $\alpha=(\alpha_1,\dots,\alpha_d)\in \nn^d$ a multi-index,
we use the standard notation $|\alpha|=\sum_{i=1}^d \alpha_i$, $\alpha ! =\alpha_1!\dots \alpha_d!$ and
$h^\alpha=h_1^{\alpha_1}\dots h_d^{\alpha_d}$ for $h\in\rd$. 
Using the Taylor formula, we write, for $y\in B(x,1)$, with the convention that $\sum_1^0=0$ if $k=0$
(i.e. $\beta \in (0,1]$),
\begin{align*}
f(y)=f(x)+\sum_{|\alpha|=1}^{k} \frac 1 {\alpha !} \partial_\alpha f(x) (y-x)^\alpha + \Delta_\beta(x,y),
\end{align*}
with $|\Delta_\beta(x,y)| \leq C |x-y|^\beta G_\beta(x)$. 
The function $G_\beta$ was precisely designed for that purpose.

\vip

For $\e\in(0,1]$, we integrate the above equality on $B(x,\e)$:
\begin{align*}
f(B(x,\e))=v_d\e^df(x)+\sum_{|\alpha|=1}^{k} \frac 1 {\alpha !} \partial_\alpha f(x) \int_{B(x,\e)}(y-x)^\alpha \dd y + 
\int_{B(x,\e)}\Delta_\beta(x,y)\dd y.
\end{align*}
But one easily checks that $\int_{B(x,\e)}(y-x)^\alpha \dd y=0$ if $|\alpha|$ is odd, while
$\int_{B(x,\e)}(y-x)^\alpha \dd y=c_\alpha \e^{|\alpha|+d}$ when $|\alpha|$ is even, with
$c_\alpha=\int_{B(0,1)}y^\alpha \dd y$. We thus may write, using the convention that $\sum_1^0=0$ when $\ell=0$
(i.e. $\beta \in (0,2]$),
\begin{equation*}
f(B(x,\e))=v_d \e^d f(x)+\sum_{j=1}^{\ell} \e^{2j+d}\delta_{2j}f(x) + \e^{d+\beta}\Gamma_\beta(x,\e),
\end{equation*}
with $|\Gamma_\beta(x,\e)|=\e^{-d-\beta}|\int_{B(x,\e)}\Delta_\beta(x,y)\dd y|\leq C  G_\beta(x)$
and $\delta_{2j}f(x)=\sum_{|\alpha|=2j} \frac 
{c_\alpha}{\alpha !} \partial_\alpha f(x)$, which satisfies $|\delta_{2j}f(x)|\leq C G_\beta(x)$
(because $2j \leq 2\ell < \beta$).

\vip

We conclude that for all $r\in[0,N]$, all $x\in\rd$,
\begin{equation}\label{ettoc}
Nf(B(x,(r/N)^{1/d}))=v_d f(x)r+r \sum_{j=1}^{\ell} \Big(\frac rN\Big)^{2j/d}\delta_{2j}f(x) 
+ r\Big(\frac rN\Big)^{\beta/d}G_\beta(x) \theta_{N,1}(x,r),
\end{equation}
for some uniformly bounded $\theta_{N,1}$, that is, 
$\sup_{N\geq 1}\sup_{x\in\rd}\sup_{r\in[0,N]} |\theta_{N,1}(x,r)|<\infty$. Also, one easily verifies
that there is a constant $C>0$ such that for all $N\geq 1$, $x\in\rd$, $r\in[0,h_N(x)]$,
\begin{equation}\label{rehn}
r \sum_{j=1}^{\ell} \Big(\frac rN\Big)^{2j/d}|\delta_{2j}f(x)|
+ r\Big(\frac rN\Big)^{\beta/d}G_\beta(x)\leq C.
\end{equation}
It suffices to use that $|\delta_{2j}f(x)|\leq CG_\beta(x)$, that
$(r/N)^{2j/d}+(r/N)^{\beta/d} \leq 2 (r/N)^{\rho/d}$ for all $j=1,\dots,\ell$ (use that $h_N(x)\leq N$ whence
$r/N\leq 1$ and recall that $\rho=\min\{\beta,2\}$)
and that $r (r/N)^{\rho/d}G_\beta(x)\leq 1$ because 
$r\leq h_N(x) \leq N^{\rho/(d+\rho)}/(G_\beta(x))^{d/(d+\rho)}$.
\vip

{\it Step 2.} For $0\leq r \leq N$, we have $f(B(x,(r/N)^{1/d}))\leq \kappa<1$.
Hence 
$$
\log [1-f(B(x,(r/N)^{1/d}))]=-f(B(x,(r/N)^{1/d})) + [f(B(x,(r/N)^{1/d}))]^2\theta_{N,2}(x,r),
$$
for some uniformly bounded function $\theta_{N,2}$.
And we have $[f(B(x,(r/N)^{1/d}))]^2 \leq v_d^2N^{-2}r^2 M^2(x)$ by definition of $M$.
As a consequence, for some new uniformly bounded function $\theta_{N,3}$,
\begin{equation}\label{tropcool}
[1-f(B(x,(r/N)^{1/d}))]^N=\exp\Big(-Nf(B(x,(r/N)^{1/d})) + N^{-1}r^2 M^2(x)\theta_{N,3}(x,r)\Big).
\end{equation}
Now since $h_N(x)\leq\sqrt N / M(x)$, it holds that for all $N\geq 1$, $x\in\rd$, $r\in[0,h_N(x)]$,
\begin{equation}\label{rehn2}
N^{-1}r^2 M^2(x) \leq 1.
\end{equation}
Combining \eqref{ettoc} and \eqref{tropcool} gives us, for $x\in\rd$ and $r\in [0,h_N(x)]$,
\begin{align*}
[1-f(B(x,(r/N)^{1/d}))]^N=e^{-v_df(x)r}\exp\Big(I_N(x,r)+J_N(x,r)\Big),
\end{align*}
where 
\begin{align*}
I_N(x,r)=&-r \sum_{j=1}^{\ell} \Big(\frac rN\Big)^{2j/d}\delta_{2j}f(x),\\
J_N(x,r)=&- r\Big(\frac rN\Big)^{\beta/d}G_\beta(x) \theta_{N,1}(x,r)
+  \frac {r^2 M^2(x)}N\theta_{N,3}(x,r).
\end{align*}
Using that $I_N$ and $J_N$ are uniformly bounded
(for $r\in[0,h_N(x)]$) by \eqref{rehn} and \eqref{rehn2}, we may write
\begin{align}\label{pasmal}
[1-f(B(x,(r/N)^{1/d}))]^N=e^{-v_df(x)r}\Big(1+\sum_{i=1}^\ell\frac 1 {i!}(I_N(x,r))^i +R_N(x,r)\Big),
\end{align}
with (since $\ell+1\geq \beta/2$ by definition of $\ell$)
\begin{align*}
|R_N(x,r)|\leq C (|I_N(x,r)|^{\ell+1}+ |J_N(x,r)|)\leq C (|I_N(x,r)|^{\beta/2}+ |J_N(x,r)|).
\end{align*}

{\it Step 3.} If $\beta\in(0,2]$, then $\ell=0$ and thus $I_N=0$.
Using \eqref{pasmal} and noting that 
$$
|R_N(x,r)|\leq C|J_N(x,r)|\leq C r \Big(\frac r N\Big)^{\beta/d}G_\beta(x)
+ C \frac {r^2M^2(x)} N
$$
for all $x\in\rd$ and all $r\in [0,h_N(x)]$ completes the proof of (i).

\vip

{\it Step 4.} We now suppose that $\beta>2$, whence $\ell\geq 1$ and $\rho=2$. 
First, we have $|I_N(x,r)|\leq C r (r/N)^{2/d} G_\beta(x)$ for all $r\in[0,h_N(x)]$, 
because $|\delta_{2j}f(x)| \leq CG_\beta(x)$ and because $r/N\leq 1$.
Thus
\begin{align}\label{tbua}
|R_N(x,r)|\leq  C \Big(\frac rN\Big)^{\beta/d}(r G_\beta(x))^{\beta/2}
+C \Big(\frac rN\Big)^{\beta/d}rG_\beta(x)
+ C \frac {r^2 M^2(x)}N.
\end{align}
Using next the
multinomial theorem, we find (here $i_1,\dots,i_\ell$ are non-negative integers)
\begin{align*}
\sum_{i=1}^\ell\frac 1 {i!}(I_N(x,r))^i=
& \sum_{i=1}^\ell (-r)^i \sum_{i_1+\cdots+i_\ell=i}\Big(\frac r N\Big)^{2(i_1+2i_2+\cdots+\ell i_\ell)/d}
\frac{(\delta_2f(x))^{i_1}(\delta_4f(x))^{i_2}\cdots(\delta_{2\ell}f(x))^{i_\ell}}{i_1!\cdots i_\ell!}\\
=& \sum_{m=1}^\ell \frac{g_m(x,r)}{N^{2m/d}} + T_N(x,r),
\end{align*}
where
\begin{align*}
g_m(x,r)=r^{2m/d}\sum_{i_1+2i_2+\cdots+\ell i_\ell=m}(-r)^{i_1+\cdots+i_\ell}
\frac{(\delta_2f(x))^{i_1}(\delta_4f(x))^{i_2}\cdots(\delta_{2\ell}f(x))^{i_\ell}}{i_1!\cdots i_\ell!}
\end{align*}
is well-defined on $\rd\times[0,\infty)$ and where
\begin{align*}
T_N(x,r)=\sum_{i=1}^\ell (-r)^i \sum_{\begin{array}{c}\scriptstyle i_1+\cdots+i_\ell=i\\ \scriptstyle
i_1+2i_2+\cdots+\ell i_\ell>\ell\end{array}}\Big(\frac r N\Big)^{2(i_1+2i_2+\cdots+\ell i_\ell)/d}
\frac{(\delta_2f(x))^{i_1}(\delta_4f(x))^{i_2}\cdots(\delta_{2\ell}f(x))^{i_\ell}}{i_1!\cdots i_\ell!}.
\end{align*}
Recalling \eqref{pasmal}, we have checked that for all $x\in\rd$, all $r\in[0,h_N(x)]$,
\begin{align*}
(1-f(B(x,(r/N)^{1/d}))^N = e^{-v_df(x)r}\Big(1+ \sum_{m=1}^\ell \frac{g_m(x,r)}{N^{2m/d}}  +S_N(x,r)\Big),
\end{align*}
with $S_N(x,r)=T_N(x,r)+R_N(x,r)$.

\vip

Since $|\delta_{2j}f(x)|\leq CG_\beta(x)$,
we deduce that for all $m=1,\dots,\ell$, all $x\in\rd$, all $r\geq 0$,
\begin{align*}
|g_m(x,r)|\leq C r^{2m/d} \sum_{i_1+2i_2+\cdots+\ell i_\ell=m} (rG_\beta(x))^{i_1+\cdots+i_\ell}\leq
C r^{2m/d}[rG_\beta(x) +(rG_\beta(x))^m],
\end{align*}
because $i_1+2i_2+\cdots+\ell i_\ell=m$ implies $1\leq i_1+\cdots+i_\ell \leq m$.

\vip

Similarly, using that $r\leq N$,
\begin{align*}
|T_N(x,r)|\leq C \Big(\frac r N \Big)^{2(\ell+1)/d}[r G_\beta(x) +(rG_\beta(x))^\ell]
\leq C \Big(\frac r N \Big)^{\beta /d}[rG_\beta(x) +(rG_\beta(x))^{\beta/2}]
\end{align*}
because $2(\ell+1)\geq \beta$ and $1\leq \ell<\beta/2$. Recalling \eqref{tbua}, we find that
\begin{align*}
|S_N(x,r)|\leq  C \Big(\frac r N \Big)^{\beta /d}[rG_\beta(x) +(rG_\beta(x))^{\beta/2}]
+ C \frac {r^2 M^2(x)}N
\end{align*}
as desired.
\end{proof}

\section{Bias}\label{bb}

The whole section is devoted to the proof of Theorems \ref{thb1} and \ref{thb2}.
We first provide an integral expression of the bias.

\begin{lem}\label{expr}
We have
\begin{align*}
\E[H_N]=&H(f)+\E\Big[\int_0^\infty \Big((1-\ba^N_1(r))^N-e^{-v_d\bff_1r}\Big)\frac{\dd r}r \Big].
\end{align*}
\end{lem}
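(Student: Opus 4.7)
The plan is to reduce the computation to the identity $\log y=\int_0^\infty (\indiq_{\{r<y\}}-\indiq_{\{r<1\}})\frac{\dd r}{r}$, valid for every $y>0$, and to introduce a well-chosen reference random variable whose law involves $\exp(-v_d\bff_1 r)$. Concretely, by exchangeability of $(X_1,\dots,X_{N+1})$, we have $\E[H_N]=\E[\log Y^N_1]+\gamma+\log v_d$, so it suffices to prove
\begin{equation*}
\E[\log Y^N_1]+\gamma+\log v_d+\E[\log \bff_1]=\E\Big[\int_0^\infty \big((1-\ba^N_1(r))^N-e^{-v_d\bff_1r}\big)\frac{\dd r}{r}\Big],
\end{equation*}
since $H(f)=-\E[\log \bff_1]$.

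Let $\xi\sim \Exp(1)$ be independent of $X_1$ and set $Z=\xi/(v_d\bff_1)$, so that $\Pr(Z>r\,|\,X_1)=e^{-v_d\bff_1 r}$ a.s., and $\E[\log Z\,|\,X_1]=\E[\log \xi]-\log v_d-\log\bff_1=-\gamma-\log v_d-\log \bff_1$ a.s.\ (using the standard $\E[\log \xi]=-\gamma$). Combining Lemma~\ref{pdd} with the layer-cake identity above and Fubini gives, whenever the relevant integrals converge absolutely,
\begin{equation*}
\E[\log Y^N_1\,|\,X_1]=\int_0^\infty \big((1-\ba^N_1(r))^N-\indiq_{\{r<1\}}\big)\frac{\dd r}{r},\qquad
\E[\log Z\,|\,X_1]=\int_0^\infty \big(e^{-v_d\bff_1r}-\indiq_{\{r<1\}}\big)\frac{\dd r}{r}.
\end{equation*}
Subtracting the two equalities makes the indicator cancel, yielding
\begin{equation*}
\E[\log Y^N_1\,|\,X_1]-\E[\log Z\,|\,X_1]=\int_0^\infty \big((1-\ba^N_1(r))^N-e^{-v_d\bff_1r}\big)\frac{\dd r}{r}\qquad \text{a.s.}
\end{equation*}
Taking the expectation and substituting $\E[\log Z]=-\gamma-\log v_d-\E[\log \bff_1]$ gives exactly the required identity.

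The one point deserving care is integrability: the two indicator-compensated integrals in the previous display are only individually finite under $\E|\log \bff_1|<\infty$ and $\E|\log Y^N_1|<\infty$, but even without these, the combined integrand on the right-hand side is unambiguously integrable. Near $r=0$, both $(1-\ba^N_1(r))^N$ and $e^{-v_d\bff_1 r}$ expand as $1-v_d\bff_1 r+O(r^2)$ (using Taylor and continuity of $f$, or more simply the bound $|\ba^N_1(r)-v_d\bff_1 r/N|$ from Lemma~\ref{tbust}(i) combined with $(1-x)^N=1-Nx+O((Nx)^2)$), so the difference is $O(r)$ against $\dd r/r$; near $r=\infty$, both terms decay and contribute a finite tail. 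Thus the right-hand side of the claimed identity is well defined, and a routine truncation argument (cut off the $\dd r/r$ integral at $[\e,1/\e]$, apply Fubini, and pass to the limit using the cancellation just described) converts the a.s.\ identity above into its integrated form. The main obstacle is exactly this justification of Fubini and of the passage to the limit without assuming more than what the paper is willing to assume at this stage; the algebra itself is entirely mechanical once $Z$ has been introduced.
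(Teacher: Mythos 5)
Your argument is essentially the paper's proof: both rest on the layer-cake identity $\log z = \int_0^\infty(\indiq_{\{r<z\}}-\indiq_{\{r<1\}})\frac{\dd r}{r}$, the introduction of $\xi\sim\Exp(1)$ independent of $X_1$ so that $\E[\log(\xi/(v_d\bff_1))]=-\gamma-\log v_d+H(f)$, Lemma~\ref{pdd} for the conditional law of $Y^N_1$, and a subtraction that cancels the $\indiq_{\{r<1\}}$ compensator. Your closing paragraph on integrability goes somewhat beyond what the paper records (which implicitly takes the relevant $\E[\log\,\cdot\,]$ to be finite), but this is a refinement of the same route rather than a different one.
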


\begin{proof}
Since $\log z=\intoi (\indiq_{\{r<z\}}-\indiq_{\{r<1\}})\frac{\dd r}r$ for $z>0$, for any
positive random variable $Z$,
\begin{align*}
\E[\log Z]= \intoi [\Pr(Z> r)-\indiq_{\{r<1\}}]  \frac{\dd r}r.
\end{align*}
Recalling that $\E[H_N]=\gamma +\log v_d +\E[\log Y^N_1]$, see \eqref{mh}, 
we deduce from Lemma \ref{pdd} that
\begin{equation}\label{j1}
\E[H_N]=\gamma+\log v_d+ \E\Big[\int_0^\infty [(1-\ba^N_1(r))^N-\indiq_{\{r<1\}}]\frac{\dd r}r\Big].
\end{equation}
Also, for $\xi\sim\Exp(1)$ independent of $X_1$, so that $\Pr(\xi/(v_d\bff_1)>r\,|\,X_1)=\exp(-v_d\bff_1r)$,
\begin{align*}
\E\Big[\log \frac{\xi}{v_d\bff_1}\Big] = 
\E\Big[\intoi [e^{-v_d\bff_1r}  -\indiq_{\{r<1\}} ]\frac{\dd r}r\Big].
\end{align*}
But $\E[\log (\xi/(v_d\bff_1))] = \E[\log \xi] - \log v_d - \E[\log \bff_1]
=-\gamma - \log v_d + H(f)$, so that we have $H(f)=\gamma +\log v_d +
\E[\intoi [e^{-v_d\bff_1r}  -\indiq_{\{r<1\}} ]\frac{\dd r}r]$.
Recalling \eqref{j1}, one easily concludes.
\end{proof}

From now on, we fix $\beta>0$ and we set $\rho=\min\{\beta,2\}$ and
$\ell=\max\{i\in\nn\;:\; 2i<\beta\}$. We assume that $f\in \CC^\beta(\rd)$ and recall that
$M$ and $G_\beta$ were defined in \eqref{mM} and \eqref{gbeta}. We assume that $\kappa=\sup_{\rd}f(B(x,1))<1$.
We put $R=M+G_\beta$ and introduce, for each $N\geq1$, 
the function $h_N:\rd\mapsto [0,N]$ defined by
\begin{align*}
h_N(x)=\min\Big\{\frac{2\log N}{m(x)},N, \frac{N^{\rho/(d+\rho)}}{(R(x))^{d/(d+\rho)}},
\frac{\sqrt N}{R(x)} \Big\},
\end{align*}
which of course satisfies \eqref{chn}. We also introduce
the shortened notation
\begin{align*}
\bR_1=R(X_1)\quad\hbox{and}\quad\bh_{N1}=h_N(X_1).
\end{align*}
We observe that $\Omega =\cup_{i=1}^4 \Omega_N^i$, where
\begin{align*}
\Omega_N^1\!=\!\Big\{\bh_{N1}= \frac{2\log N}{\bm_1}\Big\},  \quad
\Omega_N^2\!=\!\Big\{\bh_{N1}=N\Big\},\quad
\Omega_N^3\!=\!\Big\{\bh_{N1}=\frac{N^{\rho/(d+\rho)}}{\bR_1^{d/(d+\rho)}}\Big\}, \quad
\Omega_N^4\!=\!\Big\{\bh_{N1}=\frac{\sqrt N}{\bR_1}\Big\}.
\end{align*}
We infer from Lemma \ref{expr} that $\E[H_N]-H(f)= B^N_1+B^N_2+B^N_3+B^N_4$, where
\begin{gather*}
B^N_1=  \E\Big[\int_0^{\bh_{N1}}\Big((1-\ba^N_1(r))^N-e^{-v_d\bff_1r}\Big)\frac{\dd r}{r}\Big],\\
B^N_2= -\E\Big[\int_{\bh_{N1}}^\infty e^{-v_d\bff_1r}\frac{\dd r}{r}\Big],\quad
B^N_3= \E\Big[\int_{\bh_{N1}}^N (1-\ba^N_1(r))^N\frac{\dd r}{r}\Big],\quad
B^N_4= \E\Big[\int_N^{\infty} (1-\ba^N_1(r))^N\frac{\dd r}{r}\Big].
\end{gather*}

The two terms $B^N_2$ and $B^N_3$ can be studied together.

\begin{lem}\label{lb1}
Assume that $\intrd [R^{2\theta}(x)/m^{2\theta}(x)+ R^{\theta d/\rho}(x)/m^{\theta (d+\rho)/\rho}(x)]
f(x)\dd x<\infty$
for some $\theta\in(0,1]$.
Then for all $N\geq 1$,
$|B^N_2|+|B^N_3|\leq C N^{-\theta}$.
\end{lem}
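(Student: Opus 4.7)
The target quantity is an expectation of a tail integral, so the plan is first to dominate both $B_2^N$ and $B_3^N$ by one common integral, then to decouple the four possible regimes of $\bh_{N1}$.

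First, I would reduce to a single quantity. Using $\bm_1\leq v_d\bff_1$ (which holds because $m\leq v_df$ by continuity of $f$) and Lemma~\ref{tbust}(ii) on the range $r\in[\bh_{N1},N]$, both integrands are dominated by $e^{-\bm_1 r}$; extending the $B_3^N$-integral to $\infty$ only loses something non-negative, giving
\[
|B_2^N|+|B_3^N|\leq 2\,\E\Big[\int_{\bh_{N1}}^\infty e^{-\bm_1 r}\,\frac{\dd r}{r}\Big].
\]
Next I would record the elementary uniform estimate: for every $\alpha>0$ there is $C_\alpha$ with $\int_a^\infty e^{-x}x^{-1}\dd x\leq C_\alpha a^{-\alpha}$ for all $a>0$ (check the cases $a\in(0,1]$ via $-\log a\leq C_\alpha a^{-\alpha}$ and $a\geq 1$ via $e^{-a}a^\alpha$ bounded). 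The substitution $u=\bm_1 r$ then yields $\int_{\bh_{N1}}^\infty e^{-\bm_1 r}/r\,\dd r\leq C_\alpha(\bm_1\bh_{N1})^{-\alpha}$ for any $\alpha>0$ of our choice.

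The core of the argument is the decomposition $\Omega=\bigcup_{i=1}^4\Omega_N^i$ according to which term realizes the minimum defining $\bh_{N1}$, and the tuning of $\alpha$ per case. On $\Omega_N^1$, $\bm_1\bh_{N1}=2\log N$, and the uniform bound is too weak; instead I would use the sharper exponential $\int_{2\log N}^\infty e^{-u}/u\,\dd u\leq e^{-2\log N}/(2\log N)\leq CN^{-2}\leq CN^{-\theta}$, deterministically. On $\Omega_N^3$, taking $\alpha=\theta(d+\rho)/\rho$ gives pointwise $\int\leq C\bR_1^{\theta d/\rho}\bm_1^{-\theta(d+\rho)/\rho}N^{-\theta}$, whose expectation is $\leq CN^{-\theta}$ by the second half of the hypothesis. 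On $\Omega_N^4$, taking $\alpha=2\theta$ gives $\int\leq C\bR_1^{2\theta}\bm_1^{-2\theta}N^{-\theta}$, integrable by the first half of the hypothesis.

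The main obstacle is case $\Omega_N^2$, where $\bh_{N1}=N$ and the naive bound $\int\leq C(\bm_1 N)^{-\alpha}$ need not have finite expectation since $\bm_1 N$ can be arbitrarily small (we only know $\bm_1\leq v_d\bff_1\leq v_d\bR_1\leq v_d/N$). The key observation is that on $\Omega_N^2$, the defining condition $\bh_{N1}=N\leq a_3$ forces $\bR_1\leq 1/N$, and therefore $\indiq_{\Omega_N^2}\leq(N\bR_1)^{-\gamma}$ for any $\gamma\geq 0$. I would couple this with the power-bound and an appropriate exponent choice so that the resulting moment sits inside the span of the two integrability hypotheses. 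The trade-off between the exponent $\gamma$ that turns the indicator into negative powers of $\bR_1$ and the exponent $\alpha$ controlling the integral is the technical heart of the lemma; everything else is routine given the preparatory bounds above.
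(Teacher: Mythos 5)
Your overall architecture matches the paper's: dominate $|B_2^N|+|B_3^N|$ by $2\E[\int_{\bh_{N1}}^\infty e^{-\bm_1 r}\,\dd r/r]$ via Lemma~\ref{tbust}(ii), use the elementary scale $\int_a^\infty e^{-u}\,\dd u/u\leq C_\alpha a^{-\alpha}$ (the paper packages this as the function $\Phi$), and tune $\alpha$ on each of $\Omega_N^1,\dots,\Omega_N^4$. Your treatment of $\Omega_N^1$ (sharp exponential), $\Omega_N^3$ ($\alpha=\theta(d+\rho)/\rho$), and $\Omega_N^4$ ($\alpha=2\theta$) coincides with the paper's.

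There is, however, a genuine gap on $\Omega_N^2$, precisely where you declare the ``technical heart'' to lie. You assert that the naive choice $\alpha=\theta$, giving $(N\bm_1)^{-\theta}$, ``need not have finite expectation.'' This is false under the standing hypotheses: since $m\leq v_d f\leq v_d M\leq v_d R$, one has $\bm_1\leq v_d\bR_1$, hence $\bm_1^{\theta d/\rho}\leq C\,\bR_1^{\theta d/\rho}$ and therefore
\[
\bm_1^{-\theta}\;=\;\frac{\bm_1^{\theta d/\rho}}{\bm_1^{\theta(d+\rho)/\rho}}\;\leq\;C\,\frac{\bR_1^{\theta d/\rho}}{\bm_1^{\theta(d+\rho)/\rho}},
\]
whose expectation is finite by the second integrability hypothesis. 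This closes $\Omega_N^2$ with no further work. Your alternative route via $\indiq_{\Omega_N^2}\leq(N\bR_1)^{-\gamma}$ is not wrong (the inclusion $\bR_1\leq 1/N$ on $\Omega_N^2$ is correct), but it is both unnecessary and, as written, unfinished: to use the hypotheses you would in the end have to convert $\bR_1^{-\gamma}$ back into $\bm_1^{-\gamma}$ via the same inequality $\bm_1\leq C\bR_1$, landing at the same moment $\E[\bm_1^{-\theta}]$. The observation you call key is a detour; the bridge that actually closes the case, and which your write-up omits, is the pointwise comparison $m\leq v_d R$ feeding $\bm_1^{-\theta}$ into the second integrability assumption. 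With that one line supplied, the proof is complete and in fact identical to the paper's.

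Minor remark: the chain $\bm_1\leq v_d\bff_1\leq v_d\bR_1\leq v_d/N$ that you quote holds only on $\Omega_N^2$ (the last inequality uses $\bh_{N1}=N\leq N^{\rho/(d+\rho)}/\bR_1^{d/(d+\rho)}$); as written in parentheses it reads as if it held generally, which could mislead a reader.
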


\begin{proof}
First, $(1-\ba^N_1(r))^N\leq  \exp(-\bm_1 r)$ for all $r\in[0,N]$ by Lemma \ref{tbust}-(ii).
Since furthermore $m(x) \leq v_d f(x)$ by definition of $m$,
\begin{align*}
|B^N_2|+|B^N_3|\leq 2 \E\Big[\int_{\bh_{N1}}^\infty e^{-\bm_1r}\frac{\dd r}{r}\Big]
=2\E\Big[\int_{\bh_{N1}\bm_1}^\infty e^{-u}\frac{\dd u}{u}\Big]\leq 2\E[\Phi(\bh_{N1}\bm_1)],
\end{align*}
where $\Phi(x)=\exp(-x)\indiq_{\{x\geq 1\}}+[1+\log(1/x)]\indiq_{\{x <1\}}$.

\vip

On $\Omega_N^1$, $\bh_{N1}\bm_1 =2\log N$, whence $\Phi(\bh_{N1}\bm_1)\leq C N^{-2}$
and $\E[\Phi(\bh_{N1}\bm_1)\indiq_{\Omega_N^1}]\leq C N^{-2}$.

\vip

On  $\Omega_N^2$, we have $\bh_{N1}\bm_1 =N \bm_1$. Furthermore, there is a constant $C>0$ such that 
$\Phi(x)\leq C x^{-\theta}$ for all $x>0$. Thus, recalling that $m(x)\leq v_d f(x)\leq v_d R(x)$,
$$
\E[\Phi(\bh_{N1}\bm_1)\indiq_{\Omega_N^2}]\leq \frac C {N^{\theta}}\E\Big[\frac 1 {\bm_1^\theta}\Big]
=\frac C {N^{\theta}}\E\Big[\frac {\bR_1^{\theta d/\rho}} {\bm_1^\theta \bR_1^{\theta d/\rho}}\Big]
\leq \frac C {N^{\theta}}\E\Big[\frac {\bR_1^{\theta d/\rho}} {\bm_1^{\theta (d+\rho)/\rho }}\Big]
\leq \frac C {N^{\theta}}.
$$

On $\Omega_N^3$, we have $\bh_{N1}\bm_1 =N^{\rho/(d+\rho)} \bm_1 /\bR_1^{d/(d+\rho)}$
and there is $C>0$ such that $\Phi(x)\leq C x^{-\theta(d+\rho)/\rho}$ for all $x>0$.
Thus
\begin{align*}
\E[\Phi(\bh_{N1}\bm_1)\indiq_{\Omega_N^3}]\leq 
\frac C{N^\theta}\E\Big[\frac{\bR_1^{\theta d/\rho}}{\bm_1^{\theta(d+\rho)/\rho}}\Big]\leq \frac C {N^{\theta}}.
\end{align*}

On $\Omega_N^4$, $\bh_{N1}\bm_1 =\sqrt N \bm_1/\bR_1$ and there is $C>0$ such that
$\Phi(x)\leq C x^{-2\theta}$ for all $x>0$, whence
\begin{align*}
\E[\Phi(\bh_{N1}\bm_1)\indiq_{\Omega_N^4}]\leq \frac C{N^\theta} \E\Big[\frac{\bR_1^{2\theta}}{\bm_1^{2\theta}}\Big]
\leq  \frac C{N^\theta}.
\end{align*}
This completes the proof.
\end{proof}

\begin{lem}\label{lb2}
Assume that $\intrd [|x|^q+ \log(2+|x|)[f(B(x,1))]^{-\theta}]f(x)\dd x<\infty$ 
for some $q>0$ and some $\theta \in(0,1]$. Then for all $N\geq 1$, $|B^N_4|\leq  C N^{-\theta}$.
\end{lem}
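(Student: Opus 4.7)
The plan is to mimic the proof of Lemma \ref{lv3} very closely, since the two estimates differ only in integrating $\dd r/r$ instead of $\log r \, \dd r / r$. First I would perform the change of variables $r = Ns$ (so that $\dd r/r = \dd s/s$) to rewrite
$$
B^N_4 = \E\Big[\int_1^{\infty} (1-\ba^N_1(Ns))^N\frac{\dd s}{s}\Big],
$$
and then split the inner integral at the threshold $s_0 = (2\bg_1)^{d/q}$, where $g(x) = 1\lor \E[|X_1-x|^q]$ is the function from Lemma \ref{tbust}(iv). Write $B^N_4 = A_N + C_N$ where $A_N$ is the part over $s \in [1, \bg_1 \lor 1 \text{-adjusted threshold}]$ and $C_N$ is the tail.

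For the tail term $C_N$: apply Lemma \ref{tbust}(iv) to get $1-\ba^N_1(Ns) \leq \bg_1/s^{q/d} \leq 1/2$ on $\{s \geq (2\bg_1)^{d/q}\}$, so
$$
C_N \leq \E\Big[\int_{(2\bg_1)^{d/q}}^\infty \Big(\frac{\bg_1}{s^{q/d}}\Big)^N \frac{\dd s}{s}\Big]
\leq \frac{1}{2^{N-1}}\E\Big[\bg_1 \int_1^\infty \frac{\dd s}{s^{1+q/d}}\Big] \leq \frac{C}{2^N},
$$
where I used that $\E[\bg_1] < \infty$ because $g(x) \leq C(1+|x|^q)$ and $\intrd |x|^q f(x)\dd x < \infty$.

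For the main term $A_N$: on $s \in [1, (2\bg_1)^{d/q}]$ I would use that there exists $C$ with $(1-x)^N \leq e^{-Nx} \leq C(Nx)^{-\theta}$ for all $x \in (0,1]$, and moreover $\ba^N_1(Ns) = f(B(X_1, s^{1/d})) \geq f(B(X_1,1))$ since $s \geq 1$. Hence $(1-\ba^N_1(Ns))^N \leq C N^{-\theta}[f(B(X_1,1))]^{-\theta}$, and
$$
A_N \leq \frac{C}{N^\theta} \E\Big[\frac{1}{(f(B(X_1,1)))^\theta}\int_1^{(2\bg_1)^{d/q}} \frac{\dd s}{s}\Big]
= \frac{C}{N^\theta}\E\Big[\frac{\log (2\bg_1)^{d/q}}{(f(B(X_1,1)))^\theta}\Big] \leq \frac{C}{N^\theta},
$$
where the last inequality uses that $\log(2g(x))^{d/q} \leq C\log(2+|x|)$ together with the assumed integrability of $\log(2+|x|)[f(B(x,1))]^{-\theta} f(x)$. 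Combining the two bounds gives $|B^N_4| \leq C 2^{-N} + C N^{-\theta} \leq C N^{-\theta}$.

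No serious obstacle arises here: the only point of care is the logarithm bound in $A_N$ (weaker than in Lemma \ref{lv3}, which had $\log^2$), which is precisely why the hypothesis here requires only $\log(2+|x|)$ rather than $\log^2(2+|x|)$.
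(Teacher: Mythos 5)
Your proof is correct and follows essentially the same route as the paper: the same change of variables $r=Ns$, the same split at the threshold $(2\bg_1)^{d/q}$ with $g(x)=1\lor\E[|X_1-x|^q]$, and the same two bounds ($(1-x)^N\leq C(Nx)^{-\theta}$ combined with $\ba^N_1(Ns)\geq f(B(X_1,1))$ for the main term, and Lemma \ref{tbust}(iv) for the exponentially small tail). Your closing observation about why only $\log(2+|x|)$ (rather than $\log^2$) is needed here is also accurate.
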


\begin{proof}
Recall that $B^N_4=\E[\int_N^{\infty} (1-\ba^N_1(r))^N\frac{\dd r}{r}]=
\E[\int_1^{\infty} (1-\ba^N_1(Nu))^N\frac{\dd u}{u}]$. We
introduce $g(x)=1\lor\E[|X_1-x|^q]$ and $\bg_1=g(X_1)$ as usual.
We write $B^N_4=I_N+J_N$,
where  
$$
I_N=\E\Big[\int_1^{(2\bg_1)^{d/q}} (1-\ba^N_1(Nu))^N\frac{\dd u}{u}\Big]
\quad \hbox{and}\quad J_N=\E\Big[\int_{(2\bg_1)^{d/q}}^\infty (1-\ba^N_1(Nu))^N\frac{\dd u}{u}\Big].
$$

First, there is $C>0$ such that for all $x\in(0,1]$, all $N\geq 1$,
$(1-x)^N\leq e^{-Nx}\leq C (Nx)^{-\theta}$, whence $(1-\ba^N_1(Nu))^N\leq 
C /(N \ba^N_1(Nu))^{\theta}\leq C /(N f(B(X_1,1)))^{\theta}$ for all $u\geq 1$. Hence
$$
I_N \leq \frac C{N^{\theta}} \E\Big[\frac 1 {(f(B(X_1,1)))^{\theta}}\int_1^{(2\bg_1)^{d/q}}\frac {\dd r}r\Big]
\leq \frac C{N^{\theta}} \E\Big[\frac{\log (2+|X_1|)}{(f(B(X_1,1)))^{\theta}}\Big]
\leq \frac C{N^{\theta}}.
$$
We used that $|\log (2\bg_1)^{d/q}|\leq C\log (2+|X_1|)$ because $1\leq g(x)\leq C(1+|x|^q)$.

\vip

Next, since $1-\ba^N_1(Nu)\leq \bg_1u^{-q/d}$ by Lemma \ref{tbust}-(iv),
$$
J_N \leq \E\Big[ \int_{(2\bg_1)^{d/q}}^\infty \Big(\frac{\bg_1}{u^{q/d}}\Big)^N \frac {\dd u}u\Big]
\leq \Big(\frac12\Big)^{N-1}\E\Big[\bg_1 \int_1^\infty \frac {\dd u}{u^{1+q/d}}\Big]\leq \frac C {2^{N-1}}
$$
because $\E[\bg_1]<\infty$. This completes the proof.
\end{proof}

We next treat $B^N_1$ when $\beta \in (0,2]$.

\begin{lem}\label{lb4}
Assume $\beta \in (0,2]$ and $\intrd [R^{2\theta}(x)/f^{2\theta}(x)+R^{\theta d/\beta}(x)/f^{\theta(\beta+d)/\beta}(x)]
f(x)\dd x<\infty$ for some $\theta \in [0,1\land(\beta/d)]$.
Then for all $N\geq 1$, $|B^N_1|\leq  CN^{-\theta}$.
\end{lem}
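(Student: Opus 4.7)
The plan is to apply Lemma \ref{tedious}(i) on $r\in[0,\bh_{N1}]$: since $\bh_{N1}$ satisfies \eqref{chn} (with $\rho=\beta$ because $\beta\in(0,2]$, and using $R\ge M\vee G_\beta$), writing $(1-\ba^N_1(r))^N-e^{-v_d\bff_1 r}=e^{-v_d\bff_1 r}R_N(X_1,r)$ gives
\[
|B^N_1|\le \E\!\left[\int_0^{\bh_{N1}}\!\! e^{-v_d\bff_1 r}\frac{|R_N(X_1,r)|}{r}\dd r\right],\qquad \frac{|R_N(x,r)|}{r}\le C\Big(\frac{rM^2(x)}{N}+\big(\tfrac rN\big)^{\beta/d}G_\beta(x)\Big).
\]
Integrating the two summands directly yields only the rates $N^{-1}$ and $N^{-\beta/d}$ with moments $\E[M^2/\bff_1^2]$ and $\E[G_\beta/\bff_1^{1+\beta/d}]$, matching the hypothesis solely at the endpoints $\theta=1$ and $\theta=\beta/d$. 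Interpolation is essential for intermediate~$\theta$.

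The crucial additional observation is that the definition of $h_N$ forces each of the two summands to be uniformly bounded on $[0,h_N(x)]$: indeed $h_N\le \sqrt N/R\le\sqrt N/M$ gives $r^2M^2/N\le 1$, and $h_N\le N^{\beta/(d+\beta)}/R^{d/(d+\beta)}\le N^{\beta/(d+\beta)}/G_\beta^{d/(d+\beta)}$ gives $(r/N)^{\beta/d}rG_\beta\le 1$. Hence $|R_N(x,r)|\le C$ on the relevant range, so the elementary inequality $\min\{a+b,c\}\le\min\{a,c\}+\min\{b,c\}$ upgrades the bound to
\[
\frac{|R_N(x,r)|}{r}\le C\min\!\Big\{\tfrac 1r,\tfrac{rM^2(x)}{N}\Big\}+C\min\!\Big\{\tfrac 1r,\big(\tfrac rN\big)^{\beta/d}G_\beta(x)\Big\}.
\]
I would then apply $\min\{a,b\}\le a^{1-\alpha}b^\alpha$ with $\alpha_1=\theta$ in the first minimum (admissible since $\theta\le 1$) and $\alpha_2=\theta d/\beta$ in the second (admissible since $\theta\le \beta/d$), obtaining
\[
\frac{|R_N(x,r)|}{r}\le \frac{CM^{2\theta}(x)}{N^\theta}\,r^{2\theta-1}+\frac{CG_\beta^{\theta d/\beta}(x)}{N^\theta}\,r^{\theta(d+\beta)/\beta-1}.
\]

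Next I would multiply by $e^{-v_d\bff_1 r}$, enlarge the domain to $[0,\infty)$, and invoke $\int_0^\infty e^{-cr}r^{s-1}\dd r=\Gamma(s)/c^s$ (both exponents exceed $-1$ as soon as $\theta>0$) to obtain, after using $M,G_\beta\le R$,
\[
\int_0^{\bh_{N1}}\! e^{-v_d\bff_1 r}\frac{|R_N(X_1,r)|}{r}\dd r\le \frac{C}{N^\theta}\Big(\frac{R^{2\theta}(X_1)}{\bff_1^{2\theta}}+\frac{R^{\theta d/\beta}(X_1)}{\bff_1^{\theta(d+\beta)/\beta}}\Big).
\]
Taking expectation converts the right-hand side into exactly the two integrals assumed finite in the hypothesis, and yields $|B^N_1|\le CN^{-\theta}$. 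The degenerate endpoint $\theta=0$ can be handled by applying the preceding argument with any $\theta'\in(0,1\wedge\beta/d]$ for which the corresponding integrability is available.

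The main delicate point is recognizing that the uniform ceiling $|R_N|\le C$---itself a by-product of the definition of $h_N$ rather than an explicit assertion of Lemma \ref{tedious}(i)---is precisely what makes the interpolated singularities $r^{2\theta-1}$ and $r^{\theta(d+\beta)/\beta-1}$ integrable near $r=0$. Once this is in place, the Hölder exponents $\theta$ and $\theta d/\beta$ are uniquely determined by the requirement that the resulting moments coincide with the two integrability conditions featured in the hypothesis.
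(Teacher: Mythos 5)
Your argument is correct for $\theta>0$ and mirrors the paper's proof: both split $|R_N|$ into its two summands, interpolate between the small-$r$ polynomial bound and the uniform ceiling enforced by $h_N$, and conclude with the same moment conditions; the only (cosmetic) difference is that you interpolate pointwise before integrating, while the paper interpolates after integrating, via $\int_0^a e^{-br}r^s\,\dd r\leq C\min\{b^{-1},a\}^{s+1}\leq C b^{-(s+1)\alpha}a^{(s+1)(1-\alpha)}$. Your remark on the degenerate endpoint $\theta=0$ does not quite work as stated -- the hypothesis at $\theta=0$ is vacuous and grants no integrability for any $\theta'>0$ -- but that case is trivial (integrate the raw bound $|R_N(x,r)|/r\leq C(rM^2(x)/N+(r/N)^{\beta/d}G_\beta(x))$ over $[0,h_N(x)]$ and use $h_N\leq\sqrt N/M$ together with $h_N^{1+\beta/d}\leq N^{\beta/d}/G_\beta$) and is never invoked in Theorems \ref{thb1} and \ref{thb2}.
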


\begin{proof}
Thanks to Lemma \ref{tedious}-(i), we have 
$B^N_1=\E[\int_0^{\bh_{N1}} e^{-v_d\bff_1r}R_N(X_1,r)  \frac{\dd r}r]$, with furthermore
$|R_N(X_1,r)|\leq C (N^{-1}r^2\bR_1^2+ N^{-\beta/d} r^{1+\beta/d}  \bR_1)$.
Hence
$|B^N_1|\leq C(I_N+J_N)$, with 
\begin{align*}
I_N=\frac 1N\E\Big[\bR_1^2\int_0^{\bh_{N1}} e^{-v_d\bff_1r}r\dd r\Big]
\quad \hbox{and}\quad J_N=\frac 1{N^{\beta/d}}\E\Big[\bR_1\int_0^{\bh_{N1}} e^{-v_d\bff_1r} r^{\beta/d}  \dd r\Big].
\end{align*}

First, for $a,b \geq0$, we have 
$\int_0^a e^{-br}r\dd r\leq 2(\min\{b^{-1},a\})^2$ because $\intoi e^{-br}r\dd r=2b^{-2}$
and $\int_0^a r\dd r=a^2/2$. Since now $\theta \in [0,1]$, we deduce that 
$\int_0^a e^{-br}r\dd r\leq 2 b^{-2\theta}a^{2-2\theta}$. Hence
$$
I_N \leq \frac 2 N \E\Big[\frac{\bR_1^2 \bh_{N1}^{2-2\theta}}{(v_d\bff_1)^{2\theta}} \Big].
$$
Since now $h_N(x)\leq \sqrt{N}/R(x)$, we end with $I_N \leq C N^{-\theta} \E[\bR_1^{2\theta} \bff_1^{-2\theta}]
\leq CN^{-\theta}$.

\vip

Next, we observe that $\int_0^a e^{-br}r^{\beta/d}\dd r\leq C (\min\{b^{-1},a\})^{1+\beta/d}\leq 
C(b^{-\theta d/\beta}a^{1-\theta d/\beta} )^{1+\beta/d}$
because $\theta d/\beta \in[0,1]$.
Hence 
$$
J_N \leq \frac C {N^{\beta/d}}\E\Big[\frac{\bR_1 \bh_{N1}^{(1-\theta d/\beta)(1+\beta/d)} }
{(v_d\bff_1)^{(\theta d/\beta)(1+\beta/d)}}\Big].
$$
But we have $\beta\leq 2$ so that $\rho=\beta$ and thus 
$\bh_{N1}\leq N^{\beta/(\beta+d)}/\bR_1^{d/(\beta+d)}$. This precisely gives
$J_N \leq C N^{-\theta}\E[\bR_1^{\theta d/\beta} / \bff_1^{\theta(\beta+d)/\beta}]\leq C N^{-\theta}$.
\end{proof}

We finally study $B^N_1$ when $\beta>2$.

\begin{lem}\label{lb5}
Assume $\beta\in (2,d]$, recall that $\ell=\max\{i\in\nn\;:\;2i<\beta\}$ and take for granted that 
$\intrd [R^{2\beta/d}(x)/f^{2\beta/d}(x) + R^{\beta/2}(x)/f^{\beta/d+\beta/2}(x)]f(x)\dd x<\infty$.
There are some constants
$\lambda_1,\dots,\lambda_\ell \in \rr$ such that for all $N\geq 1$,
\begin{align*}
\Big|B^N_1-\sum_{i=1}^\ell \frac{\lambda_i}{N^{2i/d}}\Big|\leq \frac C{N^{\beta/d}}.
\end{align*}
\end{lem}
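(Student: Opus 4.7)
The plan is to apply Lemma~\ref{tedious}(ii) to the integrand of $B^N_1$ so as to write
$$B^N_1 = \sum_{i=1}^\ell \frac{\lambda_i^N}{N^{2i/d}} + E_N,$$
with $\lambda_i^N = \E\bigl[\int_0^{\bh_{N1}} e^{-v_d\bff_1 r}\, g_i(X_1,r)\,\frac{\dd r}{r}\bigr]$ and $E_N = \E\bigl[\int_0^{\bh_{N1}} e^{-v_d\bff_1 r}\, S_N(X_1,r)\,\frac{\dd r}{r}\bigr]$. The candidate coefficients are $\lambda_i = \E\bigl[\int_0^\infty e^{-v_d\bff_1 r}\, g_i(X_1,r)\,\frac{\dd r}{r}\bigr]$, and it then suffices to establish (a) $|E_N|\leq CN^{-\beta/d}$, (b) $\lambda_i$ is finite, and (c) $|\lambda_i-\lambda_i^N|\leq CN^{-(\beta-2i)/d}$ for $i=1,\dots,\ell$, so that each tail contributes $O(N^{-\beta/d})$ after division by $N^{2i/d}$.

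Estimate (a) is a direct adaptation of Lemma~\ref{lb4}. Using the three-term bound for $|S_N|$ from Lemma~\ref{tedious}(ii), I would split $E_N$ into pieces indexed by $r^2M^2(x)/N$, $(r/N)^{\beta/d}rG_\beta(x)$, and $(r/N)^{\beta/d}(rG_\beta(x))^{\beta/2}$. Each piece involves an integral $\int_0^{\bh_{N1}} e^{-v_d\bff_1 r} r^\alpha\dd r$, controlled by $C(\min\{(v_d\bff_1)^{-1},\bh_{N1}\})^{\alpha+1}$ and then interpolated so that the emerging power of $\bh_{N1}$, combined with the definition of $\bh_{N1}$, produces exactly $N^{-\beta/d}$ multiplied by one of $\E[R^{2\beta/d}/\bff_1^{2\beta/d}]$ or $\E[R^{\beta/2}/\bff_1^{\beta/d+\beta/2}]$ from~\eqref{condb2}.

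Estimate (b) uses $|g_i(x,r)|\leq Cr^{2i/d}[rG_\beta(x)+(rG_\beta(x))^i]$ together with $e^{-v_df(x)r}\leq e^{-m(x)r}$; computing the $r$-integral yields $|\lambda_i|\leq C(\E[G_\beta/\bff_1^{2i/d+1}] + \E[G_\beta^i/\bff_1^{2i/d+i}])$, and since $i\leq\ell<\beta/2$ and $2i\leq 2\ell<\beta$ these two moments are bounded by H\"older interpolation (using $R\geq f$) between $\E[1]=1$ and the moments in~\eqref{condb2}. Estimate (c) is handled region by region as in Lemma~\ref{lb1}: I would write $|\lambda_i-\lambda_i^N| \leq \E\bigl[\int_{\bh_{N1}}^\infty e^{-\bm_1 r}\{G_\beta(X_1) r^{2i/d}+G_\beta^i(X_1) r^{2i/d+i-1}\}\dd r\bigr]$ and use the elementary tail estimate $\int_a^\infty e^{-br}r^k\dd r\leq Cb^{-1}a^k e^{-ab}$; on $\Omega_N^1$ this gives a factor $e^{-\bm_1\bh_{N1}}=N^{-2}$, and on each of $\Omega_N^2,\Omega_N^3,\Omega_N^4$ a Markov-type bound $\Phi(x)\leq Cx^{-\theta}$ with $\theta$ tuned to the defining exponent of $\bh_{N1}$ produces a factor $N^{-(\beta-2i)/d}$ against the right moment in~\eqref{condb2}.

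The main obstacle is estimate (c): on each of the four regions $\Omega_N^j$, the Markov exponent must be chosen so that the extra polynomial factor $r^k$ coming from $|g_i(X_1,r)|/r$, combined with the defining formula of $\bh_{N1}$, produces the \emph{same} target rate $N^{-(\beta-2i)/d}$, uniformly in $i\in\{1,\dots,\ell\}$. This matching is precisely what singles out the exponents $2\beta/d$ and $\beta/2$ in~\eqref{condb2}, and makes the argument a substantial, if bookkeeping-heavy, extension of the proofs of Lemmas~\ref{lb1} and~\ref{lb4}.
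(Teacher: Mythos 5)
Your decomposition coincides with the paper's: identifying $\lambda_i - \lambda_i^N = -N^{2i/d}\Delta_N^i$, your (a), (b), (c) are exactly the paper's Steps 2, 1, 3, so the skeleton is right and you have correctly located the hard part as (c). Two substantive slips, though. First, the bound $e^{-v_df(x)r}\le e^{-m(x)r}$ is the wrong tool: integrating against it produces denominators in $m$, not $f$, whereas the hypothesis of this lemma gives only $R/f$ moments (and since $m\le v_d f$, a bound in $R/m$ is strictly stronger, not weaker). You in fact write $\bff_1$ in the denominator of your (b) bound, so you probably intend $e^{-v_d\bff_1 r}$ all along; keeping it and changing variables $u=v_d\bff_1 r$ is both correct and simpler. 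Second, $\int_a^\infty e^{-br}r^k\dd r\le Cb^{-1}a^k e^{-ab}$ is false for $ab$ small (the integral then behaves like $\Gamma(k+1)b^{-k-1}$), so your tail control for (c) does not hold uniformly in the region where $\bh_{N1}\bff_1$ is small.

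The fix for (c), and the one ingredient genuinely missing from your sketch, is to absorb the polynomial into a diluted exponential: $e^{-v_d u}u^{2i/d}[1+u^i]\le Ce^{-v_d u/2}$ reduces $|\lambda_i-\lambda_i^N|$ to $CN^{2i/d}\,\E\big[(\bR_1/(N^{2/d}\bff_1^{2/d+1}))^i\,e^{-v_d\bh_{N1}\bff_1/2}\big]$. Your four-region analysis then applies: on $\Omega_N^1$ the $\log N$ in the definition of $\bh_{N1}$ directly beats any power, and on $\Omega_N^2,\Omega_N^3$ a single power bound $e^{-v_dx/2}\le Cx^{-\alpha}$, tuned as you say to the defining formula of $\bh_{N1}$, lands on the moment $\E[R^{\beta/2}/f^{\beta/d+\beta/2}]$. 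But on $\Omega_N^4$ (where $\bh_{N1}=\sqrt{N}/\bR_1$) a single Markov-type exponent cannot simultaneously produce both assumed moments in the right form; the paper applies Young's inequality with conjugate pairs $(\beta/2,\beta/(\beta-2))$ and $(\beta/(2\ell),\beta/(\beta-2\ell))$ to split into the two integrals $\E[R^{\beta/2}/f^{\beta/d+\beta/2}]$ and $\E[R^{2\beta/d}/f^{2\beta/d}]$. That Young step is the hidden content of the ``bookkeeping'' you flag.
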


\begin{proof}
Using the notation of Lemma \ref{tedious}-(ii), we write
$$
B^N_1=\E\Big[\int_0^{\bh_{N1}} e^{-v_d\bff_1r}\Big(\sum_{i=1}^\ell \frac{g_i(X_1,r)}{N^{2i/d}}
+S_N(X_1,r)\Big)\frac{\dd r}r\Big]=
\sum_{i=1}^\ell \frac{\lambda_i}{N^{2i/d}} + \sum_{i=1}^\ell \Delta_N^i + \e_N, 
$$
where
\begin{gather*}
\lambda_i=\E\Big[\intoi e^{-v_d\bff_1r}g_i(X_1,r) \frac{\dd r}r\Big], \quad
\Delta_N^i=-\frac1{N^{2i/d}}\E\Big[\int_{\bh_{N1}}^\infty e^{-v_d\bff_1r}g_i(X_1,r) \frac{\dd r}r\Big],\\
\e_N=\E\Big[\int_0^{\bh_{N1}} e^{-v_d\bff_1r}S_N(X_1,r)  \frac{\dd r}r\Big].
\end{gather*}
It remains to prove that $\lambda_1,\dots,\lambda_\ell$ are well-defined and finite,
that $\sum_{i=1}^\ell |\Delta_N^i|\leq CN^{-\beta/d}$ and that $|\e_N|\leq  CN^{-\beta/d}$, which we
do successively in the three following steps.

\vip

{\it Step 1.} Recalling that $|g_i(x,r)|\leq C r^{2i/d}[r R(x)+(r R(x))^i]\leq C r^{2i/d}[1+(r R(x))^i]$,
we see that $\lambda_i$ is well defined for all $i=1,\dots,\ell$ because
\begin{align*}
\E\Big[\intoi \!\! e^{-v_d\bff_1r}r^{2i/d}[1+(r\bR_1)^i] \frac{\dd r}r\Big]
=\E\Big[\intoi \!\! e^{-v_du}\Big(\frac{u} {\bff_1}\Big)^{2i/d}\Big[1
+\Big(u\frac{\bR_1}{\bff_1}\Big)^i\Big] 
\frac{\dd u}u\Big]
\leq  C \E\Big[ \frac{\bR_1^i}{\bff^{2i/d+i}_1}\Big]
\end{align*}
(we used that $\bR_1\geq \bff_1$). This is finite 
because $i\leq \ell\leq \beta/2$ and because $\E[\bR_1^{\beta/2}/\bff^{\beta/d+\beta/2}_1]<\infty$.

\vip

{\it Step 2.} Since $|S_N(x,r)|\leq C r^2R^2(x)/N + C (r/N)^{\beta/d} 
[rR(x)+(rR(x))^{\beta/2}]$  for all $r\in[0,h_N(x)]$ and since
$rR(x)+(rR(x))^{\beta/2}\leq 1+2(rR(x))^{\beta/2}$ (recall that $\beta>2$), we have
$\e_N\leq\e_N^1+\e_N^2$, with
$$
\e_N^1=\frac CN\E\Big[\bR_1^2\int_0^{\bh_{N1}} e^{-v_d\bff_1r}r\dd r\Big]\quad\hbox{and}\quad
\e_N^2=\frac C{N^{\beta/2}}\E\Big[\int_0^{\bh_{N1}} e^{-v_d\bff_1r} r^{\beta/d} 
[ 1 + (r\bR_1)^{\beta/2}] 
\frac{\dd r}r\Big].
$$
First,
\begin{align*}
\e_N^2 \leq &\frac C{N^{\beta/2}}
\E\Big[\intoi e^{-v_du} \Big(\frac u {\bff_1}\Big)^{\beta/d} 
\Big[ 1 + \Big(\frac{u\bR_1}{\bff_1}\Big)^{\beta/2}\Big]
\Big) \frac{\dd u}u\Big]
\leq \frac C {N^{\beta/d}}\E\Big[\frac{\bR^{\beta/2}_1}{\bff^{\beta/d+\beta/2}_1} \Big] \leq\frac C {N^{\beta/d}} .
\end{align*}
Next, since $\int_0^a e^{-br}r\dd r\leq 2(\min\{b^{-1},a\})^2\leq 2 b^{-2\beta/d}a^{2-2\beta/d}$ for $a,b>0$
(recall that $\beta/d\in (0,1]$) and since $\bh_{N1}\leq \sqrt N /\bR_1$,
$$
\e_N^1 \leq \frac C N \E\Big[\bR_1^2 \frac {\bh_{N1}^{2-2\beta/d}}{\bff_1^{2\beta/d}}\Big]
\leq \frac C {N^{\beta/d}}\E\Big[\frac {\bR_1^{2\beta/d}}{\bff_1^{2\beta/d}}\Big]\leq\frac C {N^{\beta/d}} .
$$

{\it Step 3.} 
For $i=1,\dots,\ell$, using that $|g_i(x,r)|\leq C r^{2i/d}[1+(rR(x))^i]$,
\begin{align*}
|\Delta_N^i|\leq & \frac C  {N^{2i/d}}\E\Big[\int_{\bh_{N1}}^\infty e^{-v_d\bff_1r}  r^{2i/d}[1+
(r\bR_1)^i]\frac{\dd r}r\Big]
\leq  \frac C{N^{2i/d}}\E\Big[\frac{\bR_1^i}{\bff_1^{2i/d+i}}
\int_{\bh_{N1}\bff_1}^\infty e^{-v_d u}  u^{2i/d}[1+u^i] \frac{\dd u}u\Big].
\end{align*}
We used that $f\leq R$ for the last inequality.
Since now  $e^{-v_d u}  u^{2i/d}[1+u^i] \leq C e^{-v_d u /2}$, we find
$$
|\Delta_N^i|\leq \frac C{N^{2i/d}}  \E\Big[\frac{\bR_1^i}{\bff_1^{2i/d+i}} e^{-v_d \bh_{N1}\bff_1/2}\Big]
=C  \E\Big[\Big(\frac{\bR_1}{N^{2/d}\bff_1^{2/d+1}}\Big)^i e^{-v_d \bh_{N1}\bff_1/2}\Big].
$$
It follows that $\sum_{i=1}^\ell|\Delta_N^i| \leq C \E[\eta_N]$, where 
\begin{align*}
\eta_N=\Big[\frac{\bR_1}{N^{2/d}\bff_1^{2/d+1}}+\Big(\frac{\bR_1}{N^{2/d}\bff_1^{2/d+1}}\Big)^\ell
\Big]e^{-v_d \bh_{N1}\bff_1/2}.
\end{align*}

On $\Omega_N^1$, we have $v_d \bh_{N1}\bff_1/2= (v_d\bff_1/\bm_1) \log N \geq \log N$, so that
\begin{align*}
\E[\eta_N\indiq_{\Omega_N^1}] \leq \frac 1 N \E\Big[\frac{\bR_1}{N^{2/d}\bff_1^{2/d+1}}
+\Big(\frac{\bR_1}{N^{2/d}\bff_1^{2/d+1}}\Big)^\ell\Big]
\leq \frac C N \E\Big[ 1+ \frac{\bR_1^{\beta/2}}{\bff_1^{\beta/d+\beta/2}}\Big]\leq \frac CN.
\end{align*}
We used that $1\leq \ell \leq \beta/2$.

\vip

On $\Omega_N^2$, we have $\bh_{N1}=N$. Furthermore, there is a constant 
$C>0$ such that for all $x>0$, $e^{-v_d x/2}\leq C x^{(2-\beta)/d}$ and $e^{-v_d x/2}\leq C x^{(2\ell-\beta)/d}$
(recall that $\beta>2\ell\geq 2$). Hence
$$
\eta_N\indiq_{\Omega_N^2}\leq 
\Big[\frac{\bR_1}{N^{2/d}\bff_1^{2/d+1}}+\Big(\frac{\bR_1}{N^{2/d}\bff_1^{2/d+1}}\Big)^\ell
\Big]e^{-v_d N\bff_1/2}\leq \frac C{N^{\beta/d}\bff_1^{\beta/d}} 
\Big(\frac{\bR_1}{\bff_1}+\Big(\frac{\bR_1}{\bff_1}\Big)^\ell \Big).
$$
Since $\bff_1\leq v_d\bM_1\leq v_d\bR_1$ and since $1\leq \ell \leq \beta/2$, we conclude that
$$
\E[\eta_N\indiq_{\Omega_N^2}]\leq  
\frac C{N^{\beta/d}} \E\Big[\frac 1{\bff_1^{\beta/d}}\Big(\frac{\bR_1}{\bff_1}\Big)^{\beta/2}\Big]=
\frac C{N^{\beta/d}} 
\E\Big[\frac{\bR_1^{\beta/2}}{\bff_1^{\beta/d+\beta/2}}\Big]
\leq \frac C{N^{\beta/d}}.
$$

On $\Omega_N^3$, we have $\bh_{N1}\bff_1=N^{2/(d+2)}\bff_1/\bR_1^{d/(d+2)}$ 
(recall that $\rho=2$ because $\beta>2$).
Moreover, there is  $C>0$ such that $e^{-v_d x/2}\leq C x^{(2-\beta)(d+2)/(2d)}$ and 
$e^{-v_d x/2}\leq C x^{(2\ell-\beta)(d+2)/(2d)}$. Hence
\begin{align*}
\eta_N\indiq_{\Omega_N^3}\leq& C\frac{\bR_1}{N^{2/d}\bff_1^{2/d+1}} (\bh_{N1}\bff_1)^{(2-\beta)(d+2)/(2d)}
+C \Big(\frac{\bR_1}{N^{2/d}\bff_1^{2/d+1}}\Big)^\ell (\bh_{N1}\bff_1)^{(2\ell-\beta)(d+2)/(2d)}\\
=& C\frac{\bR_1^{\beta/2}}{N^{\beta/d}\bff_1^{\beta/d+\beta/2}}
\end{align*}
and  $\E[\eta_N\indiq_{\Omega_N^3}] \leq C N^{-\beta/d}$.

\vip

Finally on $\Omega_N^4$, we have  $\bh_{N1}\bff_1=\sqrt N \bff_1/ \bR_1$.
Moreover, there is a constant $C>0$ such that  $e^{-v_d x/2}\leq C x^{2(2-\beta)/d}$ and 
$e^{-v_d x/2}\leq C x^{2(2\ell-\beta)/d}$ for all $x>0$. Hence
\begin{align*}
\eta_N\indiq_{\Omega_N^4}\leq& C\frac{\bR_1}{N^{2/d}\bff_1^{2/d+1}} (\bh_{N1}\bff_1)^{2(2-\beta)/d}
+C \Big(\frac{\bR_1}{N^{2/d}\bff_1^{2/d+1}}\Big)^\ell (\bh_{N1}\bff_1)^{2(2\ell-\beta)/d}.
\end{align*}
We now use the Young inequality  with
$p=\beta/2$ and $p_*=\beta/(\beta-2)$ for the first term and 
$p=\beta/(2\ell)$ and $p_*=\beta/(\beta-2\ell)$ for the second one:
\begin{align*}
\eta_N\indiq_{\Omega_N^4}\leq& C\Big(\frac{\bR_1}{N^{2/d}\bff_1^{2/d+1}}\Big)^{\beta/2} 
+C (\bh_{N1}\bff_1)^{-2\beta/d}=\frac {C\bR_1^{\beta/2}}{N^{\beta/d}\bff_1^{\beta/d+\beta/2}}
+ \frac {C\bR_1^{2\beta/d}}{N^{\beta/d}\bff_1^{2\beta/d}}.
\end{align*}
Thus $\E[\eta_N\indiq_{\Omega_N^4}] \leq C N^{-\beta/d}$, which completes the proof.
\end{proof}

We quickly give the

\begin{proof}[Proof of Remark \ref{lambda1}]
Coming back to the proof of Lemma \ref{tedious}, we see
that $g_1(x,r)=-r^{2/d+1}\delta_2f(x)$, with $\delta_2f(x)=\sum_{|\alpha|=2}
(c_\alpha/\alpha!)\partial_\alpha f(x)$ and $c_\alpha=\int_{B(0,1)}y^\alpha \dd y$.
But for $\alpha=(\alpha_1,\dots,\alpha_d)$ with $|\alpha|=2$, we see that
$c_\alpha=0$ unless there is $i$ such that $\alpha_i=2$ and then
$c_\alpha>0$ does not depend on $i$ (because we work
with some symmetric norm).
Thus $g_1(x,r)=-c r^{2/d+1}\Delta f(x)$ for some constant $c>0$. 
Coming back to the proof of Lemma
\ref{lb5}, it holds that $\lambda_1=\E[\intoi e^{-v_d\bff_1}g_1(X_1,r)\frac{\dd r}{r}]$.
Thus, allowing the value of $c>0$ to vary,
$$
\lambda_1= - c \E\Big[\intoi e^{-v_d u} \Delta f(X_1)(u/\bff_1)^{2/d+1}\frac{\dd u}{u}\Big]
=-c\E\Big[\Delta f(X_1) /\bff_1^{2/d+1}\Big].
$$ 
As a consequence,
$$
\lambda_1= -c \intrd f^{-2/d}(x)\Delta f(x) \dd x=c \intrd f^{-2/d-1}(x)|\nabla f(x)|^2\dd x,
$$
the integration by parts being licit if $\lim_{|x|\to \infty}f^{-2/d}(x)|\nabla f(x)|=0$.
\end{proof}

We now have all the weapons to conclude the 

\begin{proof}[Proof of Theorem \ref{thb1}]
We fix $\beta\in (0,2]\cap(0,d]$, so that $\rho=\beta$.
We assume that $f\in \CC^\beta(\rd)$, we recall that
$M$ and $G_\beta$ were defined in \eqref{mM} and \eqref{gbeta}
and that $R=M+G_\beta$. We assume that $\kappa=\sup_{x\in \rd}f(B(x,1))<1$.
We assume \eqref{condb1} for some $\theta \in (0,\beta/d]$ and some $q>0$ (with $r_0=1$).
We recall that $\E[H_N]-H(f)= B^N_1+B^N_2+B^N_3+B^N_4$. 
It suffices to use Lemmas \ref{lb1}, \ref{lb2} and \ref{lb4} to find that 
$|B^N_1|+|B^N_2|+|B^N_3|+|B^N_4| \leq CN^{-\theta}$. The condition \eqref{condb1} indeed implies
that we can apply all these lemmas (this uses that $m\leq v_d f$).
\end{proof}

\begin{proof}[Proof of Theorem \ref{thb2}]
We fix $d\geq 3$ and $\beta\in (2,d]$, whence $\rho=2$ and
$\ell=\max\{i\in\nn\;:\;2i<\beta\}\geq 1$.
We assume that $f\in \CC^\beta(\rd)$, we recall that
$M$ and $G_\beta$ were defined in \eqref{mM} and \eqref{gbeta}
and that $R=M+G_\beta$. We assume that $\kappa=\sup_{\rd}f(B(x,1))<1$.
We assume \eqref{condb2} for some $q>0$ (with $r_0=1$).
We recall that $\E[H_N]-H(f)= B^N_1+B^N_2+B^N_3+B^N_4$. 
By Lemmas \ref{lb1} and \ref{lb2} with $\theta=\beta/d$, we have
$|B^N_2|+|B^N_3|+|B^N_4| \leq CN^{-\beta/d}$. Lemma \ref{lb5} tells us 
that $|B^N_1-\sum_{i=1}^\ell\lambda_iN^{-2i/d}|\leq CN^{-\beta/d}$.
We indeed can apply all these lemmas thanks to \eqref{condb2} (and since $m\leq v_d f$).
All this shows that $|\E[H_N]-H(f)-\sum_{i=1}^\ell\lambda_iN^{-2i/d}|\leq CN^{-\beta/d}$.
\end{proof}

\section{Corollaries and examples}\label{cc}

\subsection{Corollaries}

We start with a remark.

\begin{rk}\label{supercool}
(i) If $\intrd |x|^{d+\e}f(x)\dd x<\infty$ for some $\e\in(0,1)$, then 
$\intrd f^{1/2-\e'}(x)\dd x<\infty$, where $\e'=\e/(4d+2)$.
\vip

(ii) If $f$ is bounded, if $m \geq c f$ for some constant
$c>0$ and if
$\intrd |x|^{d+\e}f(x)\dd x<\infty$ for some $\e\in(0,1)$, then, with
$\theta=1/2+\e/(4d+2)$,
$$
\intrd \Big( \log^2 m(x)+\frac{\log^2(2+|x|)}{[f(B(x,1))]^{\theta}}
\Big)f(x)\dd x <\infty.
$$
\end{rk}

\begin{proof}
For point (i), we write $f^{1/2-\e'}(x)=f^{1/2-\e'}(x)(1+|x|)^{(d+\e)(1/2-\e')}(1+|x|)^{-(d+\e)(1/2-\e')}$ 
and we use the H\"older inequality with $p=1/(1/2-\e')$ and $p_*=1/(1/2+\e')$.
This gives $\intrd f^{1/2-\e'}(x)\dd x\leq I^{1/p}J^{1/p*}$,
with $I=\intrd f(x)(1+|x|)^{d+\e}\dd x<\infty$ by assumption and
$J=\intrd (1+|x|)^{-(d+\e)(1/2-\e')/(1/2+\e')}\dd x<\infty$ because
$(d+\e)(1/2-\e')/(1/2+\e')>d$.

\vip

For (ii), since $f \leq m/c \leq v_d f /c$ and since $f$ is bounded, we can find 
$C$ such that $\log^2 m \leq C(1+\log^2 f) \leq C f^{-\theta}$
and  $[f(B(x,1))]^{-\theta} \leq m^{-\theta}(x)\leq C f^{-\theta}(x)$. We thus only have to prove 
that $I=\intrd f^{1-\theta}(x)\log^2(2+|x|) \dd x <\infty$.
Since $1-\theta=1/2-\e'$, this is checked as point (i).
\end{proof}

We can now give the 

\begin{proof}[Proof of Corollary \ref{c1}]
We fix $\e\in(0,1)$ and assume that $f\in \CC^{\nu}(\rd)$ with $\nu=1$ if $d=1$, 
$\nu=2$ if $d\in\{2,3\}$ and $\nu=d/2+\e$ if $d\geq 4$.
We assume that $\kappa<1$ (with $r_0=1$), that $R=M+G_\nu$ is bounded and
that there is $c>0$ such that $m \geq cf$.
We finally assume that $\intrd |x|^{d+\e}f(x)\dd x<\infty$ and that 
(a) $\int_{\{f> 0\}} \sqrt {R(x)} f^{-\e}(x) \dd x <\infty$ if $d\in\{1,2\}$ or that
(b) $\int_{\{f> 0\}} R^{d/4}(x) f^{1/2-d/4-\e}(x)\dd x <\infty$ if $d\geq 3$.

\vip

{\it Step 1.} We can apply Theorem \ref{thv} and Proposition \ref{mp}
with $\theta= 1/2+\e/(4d+2)$.
Indeed, $f$ is continuous and bounded, so that we have only to check \eqref{condv} and 
\eqref{condestiv}. By Remark \eqref{supercool}-(ii), we only have to verify that
$\intrd (M(x)/m(x))(1+|\log m(x)|)f(x)\dd x<\infty$.
But since $(M/m)f\leq C M \leq C R$ and $|\log m| \leq C(1+|\log f|) \leq C f^{-\e}$
(because $m\leq C f \leq C$),
$(M/m)(1+|\log m|)f\leq C R f^{-\e}$.
If $d\in\{1,2\}$, we use that $R\leq C \sqrt R$ and conclude with (a). 
If $d\geq 3$, we write $R f^{-\e}\leq R f^{-\e} (R/f)^{d/4-1/2}
=R^{d/4+1/2} f^{1/2-d/4-\e}\leq C R^{d/4} f^{1/2-d/4-\e}$ and conclude with (b).

\vip

{\it Step 2.} We now show that we can apply Theorem \ref{thb1} with $\theta=1/2+\e/(4d+2)$ and $\beta=\nu$
when $d\in\{1,2,3\}$. By Remark \eqref{supercool}-(ii), 
we only have to verify that $I=\intrd (R(x)/m(x))^{2\theta}f(x)\dd x<\infty$
and $J=\intrd (R^{\theta d/\nu}(x)/m^{\theta(d+\nu)/\nu}(x) )f(x)\dd x<\infty$.
First, $(R/m)^{2\theta}f\leq C R^{1+\e/(2d+1)} f^{-\e/(2d+1)}$.
If $d\in\{1,2\}$, we write $(R/m)^{2\theta}f \leq  \sqrt Rf^{-\e}$,
so that $I<\infty$ by (a). If $d=3$, we write $(R/m)^{2\theta}f \leq C R^{3/4} f^{-1/4-\e}$ 
so that $I<\infty$ by (b).
Next, if $d\in\{1,2\}$, so that
$\nu=d$, we have $(R^{\theta d/\nu}/m^{\theta(d+\nu)/\nu})f\leq C R^{\theta}f^{1-2\theta}\leq
C \sqrt R f^{-\e}$, whence $J<\infty$ by (a). If $d=3$, then
$(R^{\theta d/\nu}/m^{\theta(d+\nu)/\nu})f\leq C R^{3\theta/2}f^{1-5\theta/2}\leq
CR^{3/4} f^{-1/4-\e}$, whence $J<\infty$ by (b).

\vip

{\it Step 3.} We now prove that we can apply Theorem \ref{thb2} with $\beta=d/2+\e d/(4d+2)
\in (d/2,\nu)$ when $d\geq 4$. 
Since $\beta/d =1/2+\e/(4d+2)$, we can use Remark \ref{supercool}-(ii)
and we only have to check that $I=\intrd (R(x)/m(x))^{2\beta/d}f(x)\dd x$ and 
$J= \intrd (R^{\beta/2}(x)/m^{\beta/d+\beta/2}(x) )f(x)\dd x$ are finite.
But $(R/m)^{2\beta/d}f\leq C Rf^{-\e/(2d+1)}\leq C Rf^{-\e}\leq C R f^{-\e} (R/f)^{d/4-1/2}
=CR^{d/4+1/2} f^{1/2-d/4-\e}\leq C R^{d/4} f^{1/2-d/4-\e}$. Next, 
$(R^{\beta/2}/m^{\beta/d+\beta/2})f\leq C R^{d/4}f^{1/2-d/4-\e(d+2)/(8d+4)}
\leq C R^{d/4}f^{1/2-d/4-\e}$. Hence $I$ and $J$ are finite by (b).

\vip

{\it Step 4.} Here we conclude when $d\in\{1,2,3\}$: by Step 1, we know that
$\sqrt N (H_N-\E[H_N]) \to \cN(0,\sigma^2(f))$ in law and that $V_N\to \sigma^2(f)$ in probability.
By Step 2, we know that $|\E[H_N]-H(f)|\leq C N^{-1/2-\e/(4d+2)}$, so that $\sqrt{N} (H_N-\E[H_N])\to 0$ 
in probability. We deduce that, as desired, $\sqrt{N/V_N} (H_N-H(f))\to \cN(0,1)$ in law.

\vip

{\it Step 5.} We now assume that $d\geq 4$ and observe that since $\e\in (0,1)$,
with $\beta=d/2+\e d/(4d+2)$, we have
$\ell=\max\{i\in\nn \; : \; 2i<\beta\}=\lfloor d/4\rfloor$.
By Step 1, we know that
$\sqrt N (H_N-\E[H_N]) \to \cN(0,\sigma^2(f))$ in law and that $V_N\to \sigma^2(f)$ in probability.
By Step 2, we know that there are some numbers $\lambda_1,\dots,\lambda_\ell$
so that $|\E[H_N]-H(f)-\sum_{i=1}^\ell\lambda_i N^{-2i/d}| \leq C N^{-1/2-\e/(4d+2)}$.

\vip

Recall now \eqref{richard}: we have $H_N^{(d)}=\sum_{k=0}^\ell \alpha_{k,d}H^{k}_{2^{\ell-k}n}$, where 
$n=\lfloor (N+1-\ell)/(2^{\ell+1}-1)\rfloor$, and where $H^0_{2^\ell n},\dots,H^\ell_{n}$ are independent.
For each $k=0,\dots,\ell$, we have 
\begin{gather}
\sqrt {2^{\ell-k}n} (H_{2^{\ell-k}n}^k-\E[H_{2^{\ell-k}n}^k]) \to \cN(0,\sigma^2(f)),\label{111}
\end{gather}
\begin{gather}
\Big|\E[H_{2^{\ell-k}n}^k]-H(f)-\sum_{i=1}^\ell\lambda_i (2^{\ell-k}n)^{-2i/d}\Big| 
\leq C (2^{\ell-k}n)^{-1/2-\e/(4d+2)}\leq C N^{-1/2-\e/(4d+2)}.\label{222}
\end{gather}
From \eqref{111} and since $n\sim (2^{\ell+1}-1)^{-1}N$, we conclude that
$\sqrt N (H_N^{(d)}-\E[H_N^{(d)}]) \to \cN(0,a_d \sigma^2(f))$, where
$a_d=(2-2^{-\ell})\sum_{k=0}^\ell \alpha_{k,d}^2 2^{k}$.
And the numbers $\alpha_{k,d}$ are such that
$\sum_{k=0}^\ell \alpha_{k,d} =1$ and $\sum_{k=0}^\ell \alpha_{k,d} 2^{2ki/d}=0$
for all $i=1,\dots,\ell$.
A little computation allows us to 
deduce from \eqref{222} that $|\E[H_N^{(d)}]-H(f)|\leq C N^{-1/2-\e/(4d+2)}$,
whence $\sqrt N |\E[H_N^{(d)}]-H(f)|\to 0$ in probability. All this proves that
$\sqrt N (H_N^{(d)}-H(f))\to \cN(0,a_d\sigma^2(f))$ in law.
Since finally $V_N\to \sigma^2(f)$ in probability, we conclude that 
$\sqrt {N/V_N} (H_N^{(d)}-H(f))\to \cN(0,a_d)$ in law as desired.
\end{proof}

We next give the

\begin{proof}[Proof of Corollary \ref{c2}]
We assume that $f\in \CC^\nu(\rd)$ with $\nu=\min\{d,2\}$, that $\kappa<1$ (with $r_0=1$),
that $R=M+G_\nu$ is bounded and
that there is $c>0$ such that $m\geq cf$. 
Assume finally that $\intrd |x|^{d+\e}f(x)\dd x<\infty$ for some $\e>0$, that
$\int_{\{f>0\}} M(x)|\log f(x)|\dd x<\infty$ and that
(a) $\intrd \sqrt{R(x)}  \dd x <\infty$ if $d=1$ and (b) $\intrd R^{d/(2+d)}(x)\dd x <\infty$
if $d\geq 2$. Let $\theta=1/2$ if $d=1$ and $\theta=2/(d+2)$ if $d\geq 2$. 

\vip

The only thing we have to verify is that we can apply Theorems \ref{thv} and \ref{thb1}
with this $\theta$ (and with $\beta=\nu$). We only have to verify
\eqref{condv} and \eqref{condb1}. By Remark \ref{supercool}-(ii) and since 
$(M/m)(1+|\log m|)f\leq C M+C M|\log f|$
(and $M$ is integrable because e.g. if $d\geq 2$, we have $M\leq R \leq C R^{d/(2+d)}$
and (b)), we conclude that \eqref{condv} holds with any $\theta \in (0,1/2+\e/(4d+2)]$
and thus in particular with our $\theta$.
For  \eqref{condb1}, we need $\intrd (R(x)/m(x))^{2\theta}f(x)\dd x$
and  $\intrd (R^{\theta d/\nu}(x)/m^{\theta (d+\nu)/\nu}(x))f(x)\dd x$ to be finite.

\vip

If $d=1$, we have  $(R/m)^{2\theta}f=(R/m)f\leq CR \leq C \sqrt R$, as well as 
$(R^{\theta d/\nu}/m^{\theta (d+\nu)/\nu})f=\sqrt R m^{-1}f\leq C\sqrt R$, whence the result by (a).

\vip

If $d\geq 2$, we have  $(R/m)^{2\theta}f(x)=(R/m)^{4/(d+2)}f \leq C R^{4/(d+2)}f^{1-4/(d+2)}\leq C R
\leq C R^{d/(2+d)}$ and $(R^{\theta d/\nu}/m^{\theta (d+\nu)/\nu})f
=R^{d/(d+2)}m^{-1}f\leq C R^{d/(d+2)}$, whence the result by (b).
\end{proof}

\subsection{Examples}
We finally verify that the examples of Subsection \ref{ex}
satisfy the announced properties.
It is always easily checked that there is 
$c>0$ such that $f\geq c m$, so that we omit the proof, except in example (f) where it is rather tedious.

\vip

\noindent {\bf (a)} If $f(x)=(2\pi)^{-d/2}\exp(-|x|^2/2)$,
Corollary \ref{c1} applies: we can take $r_0=1/2$ and the only difficulty is to check \eqref{cc2}.
But there is a constant $C$ such that
$R(x)\leq C \sup_{y\in B(x,1)} (1+|y|^{\lceil \nu\rceil})f(y) \leq C e^{2|x|}f(x)$.
Consequently, if $d\in\{1,2\}$, we have $\intrd \sqrt{R(x)}f^{-\e}(x)\dd x 
\leq C \intrd e^{|x|} f^{1/2-\e}(x)\dd x$ and
if $d\geq 3$, 
$\intrd R^{d/4}(x)f^{1/2-d/4-\e}(x)\dd x \leq C \intrd e^{d|x|/2} f^{1/2-\e}(x)\dd x$.
These integrals indeed converge if e.g. $\e=1/4$.

\vip

\noindent {\bf (b)} If $f(x)=c_{d,a} e^{-(1+|x|^2)^{a/2}}$ for some $a>0$, then 
Corollary \ref{c1} applies: we can take $r_0=1/2$ and the only difficulty is to check \eqref{cc2}.
If $a\in (0,1]$, there is a constant $C$
such that $R(x)\leq C \sup_{y\in B(x,1)} f(y) \leq C f(x)$ and \eqref{cc2} follows with
e.g. $\e=1/4$.
If now $a>1$, we have $R(x)\leq C \sup_{y\in B(x,1)} (1+|x|^{\lceil \nu\rceil (a-1)})f(y) 
\leq C e^{2a(1+|x|^2)^{(a-1)/2}} f(x)$ and, again,  \eqref{cc2} follows with $\e=1/4$.

\vip

\noindent {\bf (c)} If $f(x)=c_{d,a}(1+|x|^2)^{-(d+a)/2}$ with $a>d$, then
Corollary \ref{c1} applies: we can take $r_0=1/2$, we have $R\leq C f$ so that \eqref{cc2}
only requires (in any dimension) that $\intrd f^{1/2-\e}(x)\dd x<\infty$ and this is the case,
for $\e>0$ small enough, because $a>d$. Also, \eqref{cc1} with $\e>0$ small enough 
follows from the fact that $a>d$. 

\vip

\noindent {\bf (d)} If $f(x)=c_{d,a}|x|^{a}e^{-|x|}$ with $a>0$, then
$f$ belongs to $\CC^a(\rd)$, we can take $r_0=1/2$ and we have 
$M(x)+G_\beta(x)\leq C (1+|x|^{a})e^{-|x|}$ for $\beta\in(0,a]$.

\vip

\noindent $\bullet$ If $d=1$, $a\geq 1$ or $d=2$, $a\geq 2$, we can apply Corollary \ref{c1}:
\eqref{cc2} holds true for $\e\in(0,(d/a)\land (1/2))$.

\vip

\noindent $\bullet$ If $d=3$ and $a\in [2, 12)$, we can apply Corollary \ref{c1}:
\eqref{cc2} holds for $\e\in(0,1/2)$ such that $(1/4+\e)a<3$.

\vip

\noindent $\bullet$ If $d\in\{4,\dots,9\}$ and $a\in (d/2,4d/(d-2))$, 
we can apply Corollary \ref{c1}:
\eqref{cc2} holds true for $\e\in(0,1)$ such that $(d/4-1/2+\e)a<d$.

\vip

\noindent $\bullet$ If $d\geq 10$, we can never apply Corollary \ref{c1}.

\vip

\noindent $\bullet$ But, for any $d\geq 3$, $a\geq 2$, Corollary \ref{c2} applies.

\vip

\noindent {\bf (e)} If $f(x)=c_{a,b} \prod_{i=1}^d x_i^{a_i}(1-x_i)^{b_i}\indiq_{\{x_i\in [0,1]\}}$ for some
$a=(a_1,\dots,a_d)$ and $b=(b_1,\dots,b_d)$ both in $(0,\infty)^d$,
we set $\tau=\min\{a_1,b_1,\dots,a_d,b_d\}$ and $\mu=\max\{a_1,b_1,\dots,a_d,b_d\}$.
Then $f \in \CC^\tau(\rd)$ and we can take $r_0=1/4$. For
any $\beta \in (0,\tau]$, we have $M(x)+G_\beta(x)\leq C$ (so that for any
powers $\alpha>0$, $\eta>0$, $\int_{\{f>0\}} (M(x)+G_\beta(x))^\alpha f^{-\eta}(x)\dd x<\infty$
if and only if $\eta<1/\mu$).

\vip

\noindent $\bullet$ If $d=1$, $\tau\geq 1$ or $d=2$, $\tau\geq 2$, Corollary \ref{c1} applies:
\eqref{cc2} holds for $\e\in(0,1/\mu)$.

\vip

\noindent $\bullet$ If $d=3$, $2\leq \tau \leq \mu < 4$,  Corollary \ref{c1} applies:
\eqref{cc2} holds for $\e\in(0,1)$ such that $(1/4-\e)\mu<1$.

\vip

\noindent $\bullet$ If $d\geq 4$, we can never apply Corollary \ref{c1}. But for
any $d\geq 3$, $\tau\geq 2$, Corollary \ref{c2} applies.

\vip

\noindent {\bf (f)} If $d=1$ and $f(x)=c_p x^p|\sin(\pi/x)|\indiq_{\{x\in (0,1)\}}$ with $p\geq 2$,
then $f \in \CC^1(\rr)$ and we can choose $r_0=1/6$. To apply Corollary \ref{c1}, we need
to verify that (i) $\int_0^1 f^{-\e}(x)\dd x<\infty$ for some $\e>0$ and
(ii) there is $c>0$ such that $m\geq cf$.

\vip

\noindent We start with (i). 
By the Cauchy-Schwarz inequality and since $\int_0^1 x^{- p\e}(x)\dd x<\infty$ for $\e\in(0,1/p)$, 
it suffices to verify that $I=\int_0^1 |\sin(\pi/x)|^{-\e}\dd x<\infty$ for $\e>0$ small enough.
But $I=\int_1^\infty |\sin(\pi u)|^{-\e}\frac{\dd u}{u^2}\leq \sum_{k\geq 1} k^{-2}\int_k^{k+1}|\sin(\pi u)|^{-\e}\dd u$.
Now if  $\e\in(0,1)$, $\int_k^{k+1}|\sin(\pi u)|^{-\e}\dd u$ is finite and does not depend on $k$,
whence $I<\infty$ as desired.

\vip

\noindent For (ii), we need to verify that there is $c>0$ such that $F(x,r):=r^{-1}\int_{x-r}^{x+r}f(y)\dd y \geq c f(x)$
for all $x\in(0,1)$ and all $r\in(0,1/6)$. Let us give the main steps.

\vip

\noindent (A) For all $0\leq a \leq b\leq a+2$, $\int_a^b |\sin \pi u| \dd u
\geq c (b-a)(|\sin(\pi a)|+|\sin(\pi b)|)$. This relies on a little study: assume that
$0\leq a \leq b \leq 2$ by periodicity and separate the cases $0\leq a \leq b\leq 1$,
$0\leq a \leq 1 \leq b\leq 2$ and $1 \leq a \leq b\leq 2$.

\vip

\noindent (B) If $x\in [1/2,1)$, $F(x,r)\geq r^{-1}\int_{x-r}^{x}f(y)\dd y
\geq c  r^{-1}\int_{x-r}^{x} |\sin(\pi/y)| \dd y=c r^{-1} \int_a^b  |\sin(\pi u)| \frac{\dd u}{u^2}$,
with $a=x^{-1}$ and $b=(x-r)^{-1}$. Since $1\leq a \leq b \leq 3$, we deduce from (A) that
$F(x,r) \geq c r^{-1} \int_a^b  |\sin(\pi u)|\dd u \geq c r^{-1}  (b-a)|\sin(\pi a)|$.
Using again that $1\leq a \leq b \leq 3$, we see that $r^{-1}(b-a)=ab \geq 1$, so that finally,
$F(x,r) \geq c |\sin(\pi /x)|\geq c x^p |\sin(\pi /x)|$ as desired.

\vip

\noindent (C) 
If now $x\in (0,1/2)$, we write  $F(x,r)\geq c x^p r^{-1}\int_{x}^{x+r}|\sin(\pi/y)|\dd y=
c x^p r^{-1}\int_{a}^{b} |\sin(\pi u)|  \frac{\dd u}{u^2}$, with $a=(x+r)^{-1}$ and $b=x^{-1}$,
so that $3/2 \leq a \leq b$.

\vip

\noindent $\bullet$ If $\lfloor b \rfloor \in \{\lfloor a \rfloor,\lfloor a \rfloor+1\}$, which implies
that $b-a \leq 2$, we use point (A)
to write   $F(x,r)\geq c x^p r^{-1} b^{-2} (b-a)|\sin(\pi b)|\geq c x^p|\sin(\pi /x)|$ as desired.
We used that $r^{-1} b^{-2} (b-a)=a b^{-1} \geq 3/7$ because $3/2 \leq a \leq b\leq a+2$.

\vip

\noindent $\bullet$ If $\lfloor b \rfloor \geq \lfloor a \rfloor+2$, we write 
$$
F(x,r)\geq  c x^p r^{-1}
\sum_{k=\lfloor a \rfloor+1}^{\lfloor b \rfloor-1} k^{-2} \int_{k}^{k+1} |\sin(\pi u)|  \dd u
=c x^p r^{-1}\sum_{k=\lfloor a \rfloor+1}^{\lfloor b \rfloor-1} k^{-2}
\geq c x^p r^{-1}\int_{\lfloor a \rfloor+1}^{\lfloor b \rfloor} t^{-2}dt,
$$
whence $F(x,r)\geq  c x^p r^{-1}[(\lfloor a \rfloor+1)^{-1}-\lfloor b \rfloor^{-1}]$.
Using that $\lfloor b \rfloor \geq \lfloor a \rfloor+2\geq 3$, we easily conclude that
$F(x,r)\geq  c x^p r^{-1} (a^{-1}-b^{-1})=cx^p\geq c x^p|\sin(\pi /x)|$.


\begin{thebibliography}{99}

\bibitem{bdgvdm}{J. Beirlant, E.J. Dudewicz, L. Gy\"orfi, E.C. van der Meulen, 
{\it Nonparametric entropy estimation: an overview,} Int. J. Math. Stat. Sci. 
{\bf 6} (1997), 17--39.}

\bibitem{bb}{P.J. Bickel, L.  Breiman,
{\it Sums of functions of nearest neighbor distances, moment bounds, limit theorems and a goodness 
of fit test,}
Ann. Probab. {\bf 11} (1983), 185--214.}

\bibitem{dvdm}{E.J. Dudewicz, E.C. van der Meulen, {\it Entropy-based tests of uniformity,} 
J. Amer. Statist. Assoc. {\bf 76} (1981), no. 376, 967--974.}

\bibitem{eg}{H. El Haje, Y. Golubev, {\it On entropy estimation by m-spacing method}, 
J. Math. Sci. {\bf 163} (2009), 290--309.}

\bibitem{glmn}{M.N. Goria, N.N. Leonenko, V.V. Mergel, P.L. Novi Inverardi,
{\it A new class of random vector entropy estimators and its applications in testing statistical 
hypotheses.} 
J. Nonparametr. Stat. {\bf 17} (2005), 277--297.}

\bibitem{h}{P. Hall, {\it Limit theorems for sums of general functions of $m$-spacings},
Math. Proc. Cambridge Philos. Soc. {\bf 96} (1984), no. 3, 517--532.}

\bibitem{h2}{P. Hall, {\it On powerful distributional tests based on sample spacings}, 
J. Multivariate Statist. {\bf 19} (1986), 201--225.}

\bibitem{hm}{P. Hall, S. Morton, {\it On the estimation of entropy},
Ann. Inst. Statist. Math. {\bf 45} (1993), no. 1, 69--88.}

\bibitem{j}{H. Joe, {\it  Estimation of entropy and other functionals of a multivariate density,}
Ann. Inst. Statist. Math. {\bf 41} (1989), no. 4, 683–697.}

\bibitem{kl}{L.F. Kozachenko, N.N. Leonenko,
{\it A statistical estimate for the entropy of a random vector,}
Problemy Peredachi Informatsii {\bf 23} (1987), 9--16.}

\bibitem{lps}{N. Leonenko, L. Pronzato, V. Savani, {\it A class of R\'enyi information estimators 
for multidimensional densities,} Ann. Statist. {\bf 36} (2008), 2153--2182.} 

\bibitem{l}{B. Ya. Levit, {\it Asymptotically efficient estimation of nonlinear functionals,} 
Problems Inform. Transmission {\bf 14} (1978), 65--72.}

\bibitem{ml}{G.S. Mudholkar, C.T. Lin, {\it On two applications of characterization theorems to goodness-of-fit,} 
Goodness-of-fit (Debrecen, 1984), 395--414, Colloq. Math. Soc. J\'anos Bolyai, 45, North-Holland, Amsterdam, 1987.}

\bibitem{pps}{D. P\'al, B. P\'oczos, C. Szepesv\'ari, {\it Estimation of R\'enyi Entropy and Mutual 
Information Based on Generalized Nearest-Neighbor Graphs}, 
Proceedings of NIPS, 2010.}

\bibitem{r}{P.M. Robinson, {\it Consistent nonparametric entropy-based testing}, Rev. Econom. Stud. 
{\bf 58} (1991), no. 3, 437--453.}

\bibitem{tvdm}{A.B. Tsybakov, E.C. van der Meulen, 
{\it Root-n consistent estimators of entropy for densities with unbounded support,} 
Scand. J. Statist. {\bf 23} (1996), 75--83.}

\bibitem{vE}{B. van Es, {\it Estimating functionals related to a density by a class of statistics 
based on spacings,} Scand. J. Statist. {\bf 19} (1992), no. 1, 61--72.}

\bibitem{v}{O. Vasicek, {\it A test for normality based on sample entropy,} J. Roy. Statist. Soc. Ser. B 
{\bf 38} (1976), 54--59.}

\bibitem{w}{Wikipedia, {\it Entropy estimation}, https://en.wikipedia.org/wiki/Entropy\_estimation.}

\end{thebibliography}
\end{document}